\documentclass{amsart}

\usepackage[utf8]{inputenc}
\usepackage[mathscr]{eucal}

\usepackage{amssymb}
\usepackage{amsmath}
\usepackage{amsthm}
\usepackage{enumitem}

\usepackage{pgf,tikz}
\usetikzlibrary{arrows,matrix}

\numberwithin{equation}{section}
\theoremstyle{plain}
\newtheorem{thm}{Theorem}[section]
\newtheorem{lemma}[thm]{Lemma}
\newtheorem{prop}[thm]{Proposition}
\newtheorem{cor}[thm]{Corollary}
\newtheorem{claim}[thm]{Claim}
\newtheorem{fact}[thm]{Fact}
\newtheorem{open}[thm]{Question}

\theoremstyle{definition}
\newtheorem{defn}[thm]{Definition}

\newtheorem{remark}[thm]{Remark}

\usepackage{xspace} 

\DeclareMathOperator{\bip}{Bip}

\def\Bip{{\rm Bip\,}}

\def\Pic{{\rm Pic}}
\def\Div{{\rm Div}}

\newcommand\wapr{\approx_W}

\renewcommand{\qedsymbol}{$\blacksquare$}

\definecolor{light-gray}{gray}{0.8}
\definecolor{v}{rgb}{0.28,0,0.72}
\definecolor{e}{rgb}{0,1,0.2}
\definecolor{r}{rgb}{1,0,0}



\begin{document}

\title[The sandpile group for trinities]{The sandpile group of a trinity and a canonical definition for the planar Bernardi action}

\author{Tam\'as K\'alm\'an}\thanks{TK was supported by a Japan Society for the Promotion of Science (JSPS) Grant-in-Aid for Scientific Research C (no.\ 17K05244).} 
\address{Department of Mathematics\\
Tokyo Institute of Technology\\
H-214, 2-12-1 Ookayama, Meguro-ku, Tokyo 152-8551, Japan}
\email{kalman@math.titech.ac.jp}

\author{Seunghun Lee}\thanks{SL was supported by Basic Science Research Program through the National Research Foundation of Korea (NRF) funded by the Ministry of Education (NRF-2016R1D1A1B03930998). SL is grateful to the CAMPUS Asia program from which he was supported to visit Tokyo Institute of Technology as an exchange student.}
\address{Department of Mathematical Sciences, Korea Advanced Institute of Science and Technology, 291 Daehak-ro, Yuseong-gu, Daejeon 34141, South Korea}
\email{prosolver@kaist.ac.kr}

\author{Lilla T\'othm\'er\'esz}\thanks{LT was supported by the NSF grant DMS-1455272 and the National Research, Development and Innovation Office of Hungary -- NKFIH, grants no.\ 128673 and 132488.}
\address{Cornell University,
	Ithaca, New York 14853-4201, USA, 
	MTA-ELTE Egerv\'ary Research Group, P\'azm\'any P\'eter s\'et\'any 1/C, Budapest, Hungary}
\email{tmlilla@cs.elte.hu}

\date{}

\begin{abstract}
Baker and Wang define the so-called Bernardi action of the sandpile group of a ribbon graph on the set of its spanning trees. This potentially depends on a fixed vertex of the graph but it is independent of the base vertex if and only if the ribbon structure is planar, moreover, in this case the Bernardi action is compatible with planar duality. Earlier, Chan, Church and Grochow and Chan, Glass, Macauley, Perkinson, Werner and Yang proved analogous results about the rotor-routing action. Baker and Wang moreover showed that the Bernardi and rotor-routing actions coincide for plane graphs.

We clarify
this still confounding
picture by giving a canonical definition for the planar Bernardi/rotor-routing action, and also a canonical isomorphism between sandpile groups of planar dual graphs. Our canonical definition implies the compatibility with planar duality via an extremely short argument.
We also show hidden symmetries of the problem by proving our results in the slightly more general setting of balanced plane digraphs.

Any balanced plane digraph gives rise to a trinity, i.e., a triangulation of the sphere with a three-coloring of the $0$-simplices. Our most important tool is a group associated to trinities, introduced by Cavenagh and Wanless, and a result of a subset of the authors characterizing the Bernardi bijection in terms of a dissection of a root polytope.
\end{abstract}

\maketitle

\section{Introduction}

    For an undirected graph, the sandpile group is a finite Abelian group whose order equals the number of spanning trees of the graph. 
    Free transitive group actions of the sandpile group on the spanning trees have recently been an active topic of investigation. Two such classes of group actions are the rotor-routing actions and the Bernardi actions \cite{Holroyd,Baker-Wang}. 
    In \cite{Chan15,rr_comp,Baker-Wang}, some remarkable and somewhat mysterious properties of these group actions were uncovered.

    Both group actions are defined using the same auxiliary data, namely a ribbon structure and a fixed vertex (which is called the base point). A ribbon structure is a cyclic ordering of the edges around each vertex. If a graph is embedded into an orientable surface, the embedding induces a ribbon structure using the positive orientation of the surface, and conversely, for any ribbon structure there exists a closed orientable surface of minimal genus so that the graph embeds into it, inducing the particular ribbon structure. 
	
	Chan, Church and Grochow \cite{Chan15} prove that the rotor-routing action is independent of the base point if and only if the ribbon structure is planar (that is, the graph is embedded into the plane). Chan, Glass, Macauley, Perkinson, Werner and Yang \cite{rr_comp} show that moreover, in the planar case, the rotor-routing action is compatible with planar duality in the following sense: The sandpile groups of a plane graph and its dual are known to be canonically isomorphic \cite{cori-rossin}, 
	and there is a canonical bijection between the spanning trees of the two graphs. The two bijections intertwine the two rotor-routing actions.
	
	Baker and Wang prove analogous results about the Bernardi action, i.e., that the Bernardi action is independent of the base point if and only if the ribbon structure is planar, and in the planar case, the Bernardi action is compatible with planar duality. They also show that in the planar case, the Bernardi and rotor-routing actions coincide, and exhibit an example that in the general case, the two actions can be different.

    These phenomena still feel somewhat myterious. We contribute to the better understanding of this topic by the followings.
    
    \begin{enumerate}
    	\item We give a canonical (that is, base-point free) definition for the planar Bernardi/rotor-routing action, essentially as an action of a group on one of its cosets, where the group is canonicaly isomorphic to the sandpile group and the coset is canonically in bijection with the spanning trees. (See Corollary \ref{cor:Bernardi_can_def}.)
    	\item Using the canonical definition, we give an extremely short proof for the compatibility of the action with planar duality. (See Corollary \ref{cor:compatibility}.)
    	\item We give a canonical (that is, reference-orientation free) definition for the canonical isomorphism between the sandpile group of a planar graph and its dual. (See Theorem \ref{thm:isomorphism_between_sandpile_groups} and Proposition \ref{prop:isomorphism_agree}.)
    	\item We reprove the coincidence of the Bernardi and rotor-routing actions. (See Theorem \ref{thm:rotor_Bernardi_the_same}.)
    	\item We reveal hidden symmetries of the problem by proving all of the above statements in the slightly more general setting of balanced plane digraphs.
    \end{enumerate}
    
	Indeed it turns out that the natural setting in our case is that of balanced plane digraphs. By plane graph, we mean a ribbon graph that is embedded into the plane.
    A ribbon digraph is \emph{balanced} if incoming and outgoing edges alternate in the cyclic ordering around each vertex. Such graphs are automatically Eulerian. Any undirected graph has a \emph{bidirected graph} associated to it, in which each edge is replaced by a pair of oppositely oriented edges. When the undirected graph is embedded in the plane, these pairs of new edges can be arranged so that the resulting plane digraph is balanced. In this sense, balanced plane digraphs generalize plane graphs. 
	
	\subsection{Our method}
	Our method is to analyze trinities, i.e., triangulations of the sphere with a proper three-coloring of the 0-simplices. Any plane graph gives rise, by its first barycentric subdivision, to a trinity with the three color classes corresponding to vertices, edges and regions, but there are also trinities not coming from plane graphs. For the definitions, see Section \ref{ss:trinities}, and for an exampe, see Figure \ref{fig:trinity_from_a_graph}.
	One can associate three (balanced plane) digraphs to a trinity, one on each color class. We call these $D_V,D_E$ and $D_R$. If the trinity is obtained from a plane graph, then $D_V$ is the bidirected version of $G$ and $D_R$ is the bidirected version of the planar dual $G^*$ of $G$. The third directed graph $D_E$ is the common medial graph of $G$ and $G^*$. 
	
	
	Recently, Cavenagh and Wanless \cite{CW} introduced an abelian group $\mathcal A_W$ for trinities that we call the \emph{trinity sandpile group}. This is a certain quotient group of $\mathbb{Z}^{V\cup E\cup R}$. Blackburn and McCourt \cite{BMcC13} related this group to the sandpile groups of the directed graphs associated to the trinity, showing that all three are isomorphic to the torsion subgroup of $\mathcal A_W$. 
	We give an alternative, more natural embedding of the three sandpile groups into the trinity sandpile group. These embeddings yield canonical isomorphisms between the sandpile groups of $D_V$, $D_E$ and $D_R$. Hence as a special case, we get a canonical isomorphism between the sandpile groups of a plane graph and its dual. We show that in this special case this isomorphism agrees with the one given by Cori and Rossin \cite{cori-rossin} (which is defined using a reference orientation).
	
	The sandpile group of $D_E$ is a quotient group of $\mathbb{Z}_0^E$ (which means the subgroup of $\mathbb{Z}^E$ where the sum of the components is $0$). If the trinity comes from a planar graph $G$ (and hence $D_E$ is the medial graph), then (the characteristic vectors of) the spanning trees of $G$ form a coset of the sandpile group of $D_E$. We show that in this case, the Bernardi action is simply the natural group action of the sandpile group of $D_E$ on the coset of spanning trees, composed with the canonical isomorphism between the sandpile groups of $D_E$ and $D_V$. This gives a very simple canonical definition for the Bernardi action. The action of $G^*$ on its spanning trees can once again be described as a natural group action of the sandpile group of $D_E$ on the coset that corresponds to the spanning trees of $G^*$, composed with an isomorphism between the sandpile groups of $D_R$ and $D_E$. As both the natural group actions and the isomorphisms have a simple relationship to each other, we obtain a very simple proof for the compatibility of the Bernardi action with planar duality.

	\begin{figure}
	  \begin{tikzpicture}[-,>=stealth',auto,scale=0.6,thick]
		  	
	    \begin{scope}[shift={(-14,0)}]
	    \draw [-,color=red] (0.6, 1) -- (2,2.7);
	    \draw [-,color=red] (0.6, 1) -- (2,-0.7);
	    \draw [-,color=red] (3.4, 1) -- (2,2.7);
	    \draw [-,color=red] (3.4, 1) -- (2,-0.7);
	    \draw [-,color=red] (0.6, 1) -- (3.4,1);
	    \draw [-,color=blue] (2, 1) -- (2,1.55);
	    \draw [-,color=blue] (2, 1) -- (2,0.45);
	    \draw [-,color=blue] (2.7, 1.85) -- (2,1.55);
	    \draw [-,color=blue] (2.7, 0.15) -- (2,0.45);
	    \draw [-,color=blue] (1.3, 0.15) -- (2,0.45);
	    \draw [-,color=blue] (1.3, 1.85) -- (2,1.55);
	    \draw [-,color=blue,rounded corners=5pt] (1.3, 1.85) -- (1.55, 3) --  (1.9,3.5) -- (2.5,3.5) -- (5,1);
	    \draw [-,color=blue,rounded corners=5pt] (1.3, 0.15) -- (1.55, -1) --  (1.8,-1.5) -- (2.5,-1.5) -- (5,1);
	    \draw [-,color=blue] (2.7, 1.85) -- (5,1);
	    \draw [-,color=blue] (2.7, 0.15) -- (5,1);
	    \draw [color=blue,fill=blue] (0.6, 1) circle [radius=0.1];
	    \draw [color=blue,fill=blue] (2, 2.7) circle [radius=0.1];
	    \draw [color=blue,fill=blue] (3.4, 1) circle [radius=0.1];
	    \draw [color=blue,fill=blue] (2, -0.7) circle [radius=0.1];
	    \draw [color= red,fill=red] (2, 1.55) circle [radius=0.1];
	    \draw [color= red,fill=red] (2, 0.45) circle [radius=0.1];
	    \draw [color= red,fill=red] (5, 1) circle [radius=0.1];
	    \end{scope}
	    
	    \begin{scope}[shift={(-7,0)}]
	    \draw[color=light-gray,fill=light-gray] (0.6, 1) -- (1.3, 1.85) -- (2,1.55) -- cycle;
	    \draw[color=light-gray,fill=light-gray] (0.6, 1) -- (2, 1) -- (2,0.45) -- cycle;
	    \draw[color=light-gray,fill=light-gray] (0.6, 1) -- (1, -1.5) -- (2.2, -2) -- (4, -0.8) -- (5,1) -- (2.5, -1.5) -- (1.8, -1.5) -- (1.55, -1) -- (1.3, 0.15) -- cycle;
	    \draw[color=light-gray,fill=light-gray,rounded corners=5pt] (0.6, 1) -- (1, -2) -- (2.2, -2.5) -- (4, -1) -- (5,1) -- (2.5, -1.5) -- (1.8, -1.5) -- (1.55, -1) -- (1.3, 0.15) -- cycle;
	    \draw[color=light-gray,fill=light-gray] (1.3, 0.15) -- (2, 0.45) -- (2,-0.7) -- cycle;
	    \draw[color=light-gray,fill=light-gray] (5, 1) -- (2.7, 0.15) -- (2,-0.7) -- cycle;
	    \draw[color=light-gray,fill=light-gray] (5, 1) -- (3.4, 1) -- (2.7,1.85) -- cycle;
	    \draw[color=light-gray,fill=light-gray] (5, 1) -- (2, 2.7) -- (1.3, 1.85) -- (1.55, 2.8) --  (1.9,3) -- (2.5,3) -- cycle;
	    \draw[fill=light-gray,color=light-gray,rounded corners=5pt] (5, 1) -- (2, 2.7) -- (1.3, 1.85) -- (1.55, 3) --  (1.9,3.5) -- (2.5,3.5) -- cycle;
	    \draw[color=light-gray,fill=light-gray] (3.4, 1) -- (2, 1.55) -- (2,1) -- cycle;
	    \draw[color=light-gray,fill=light-gray] (2.7, 1.85) -- (2, 2.7) -- (2,1.55) -- cycle;
	    \draw[color=light-gray,fill=light-gray] (2.7, 0.15) -- (3.4, 1) -- (2,0.45) -- cycle;
	    \draw [-,color=red] (0.6, 1) -- (2,2.7);
	    \draw [-,color=red] (0.6, 1) -- (2,-0.7);
	    \draw [-,color=red] (3.4, 1) -- (2,2.7);
	    \draw [-,color=red] (3.4, 1) -- (2,-0.7);
	    \draw [-,color=red] (0.6, 1) -- (3.4,1);
	    \draw [-,color=blue] (2, 1) -- (2,1.55);
	    \draw [-,color=blue] (2, 1) -- (2,0.45);
	    \draw [-,color=blue] (2.7, 1.85) -- (2,1.55);
	    \draw [-,color=blue] (2.7, 0.15) -- (2,0.45);
	    \draw [-,color=blue] (1.3, 0.15) -- (2,0.45);
	    \draw [-,color=blue] (1.3, 1.85) -- (2,1.55);
	    \draw [-,color=blue,rounded corners=5pt] (1.3, 1.85) -- (1.55, 3) --  (1.9,3.5) -- (2.5,3.5) -- (5,1);
	    \draw [-,color=blue,rounded corners=5pt] (1.3, 0.15) -- (1.55, -1) --  (1.8,-1.5) -- (2.5,-1.5) -- (5,1);
	    \draw [-,color=blue] (2.7, 1.85) -- (5,1);
	    \draw [-,color=blue] (2.7, 0.15) -- (5,1);
	    \draw [-,color=green] (0.6, 1) -- (2,1.55);
	    \draw [-,color=green] (3.5, 1) -- (2,1.55);
	    \draw [-,color=green] (2, 2.7) -- (2,1.55);
	    \draw [-,color=green] (0.6, 1) -- (2,0.45);
	    \draw [-,color=green] (3.4, 1) -- (2,0.45);
	    \draw [-,color=green] (2, -0.7) -- (2,0.45);
	    \draw [-,color=green] (3.4, 1) -- (5,1);
	    \draw [-,color=green] (2, 2.7) -- (5,1);
	    \draw [-,color=green] (2, -0.7) -- (5,1);
	    \draw [-,color=green,rounded corners=5pt] (0.6, 1) -- (1, -2) -- (2.2, -2.5) -- (4, -1) -- (5,1);
	    \draw [color=blue,fill=blue] (0.6, 1) circle [radius=0.1];
	    \draw [color=blue,fill=blue] (2, 2.7) circle [radius=0.1];
	    \draw [color=blue,fill=blue] (3.4, 1) circle [radius=0.1];
	    \draw [color=blue,fill=blue] (2, -0.7) circle [radius=0.1];
	    \draw [color=green,fill=green] (1.3, 1.85) circle [radius=0.1];
	    \draw [color=green,fill=green] (1.3, 0.15) circle [radius=0.1];
	    \draw [color=green,fill=green] (2.7, 1.85) circle [radius=0.1];
	    \draw [color=green,fill=green] (2.7, 0.15) circle [radius=0.1];
	    \draw [color=green,fill=green] (2, 1) circle [radius=0.1];
	    \draw [color= red,fill=red] (2, 1.55) circle [radius=0.1];
	    \draw [color= red,fill=red] (2, 0.45) circle [radius=0.1];
	    \draw [color= red,fill=red] (5, 1) circle [radius=0.1];
	    \end{scope}
	    
	    \begin{scope}[shift={(0,0)}]
	    \draw[fill=light-gray] (0.6, 1) -- (1.3, 1.85) -- (2,1.55) -- cycle;
	    \draw[fill=light-gray] (0.6, 1) -- (2, 1) -- (2,0.45) -- cycle;
	    \draw[fill=light-gray] (0.6, 1) -- (1, -1.5) -- (2.2, -2) -- (4, -0.8) -- (5,1) -- (2.5, -1.5) -- (1.8, -1.5) -- (1.55, -1) -- (1.3, 0.15) -- cycle;
	    \draw[color=light-gray,fill=light-gray,rounded corners=5pt] (0.6, 1) -- (1, -2) -- (2.2, -2.5) -- (4, -1) -- (5,1) -- (2.5, -1.5) -- (1.8, -1.5) -- (1.55, -1) -- (1.3, 0.15) -- cycle;
	    \draw[color=light-gray,fill=light-gray] (1.3, 0.15) -- (2, 0.45) -- (2,-0.7) -- cycle;
	    \draw[color=light-gray,fill=light-gray] (5, 1) -- (2.7, 0.15) -- (2,-0.7) -- cycle;
	    \draw[color=light-gray,fill=light-gray] (5, 1) -- (3.4, 1) -- (2.7,1.85) -- cycle;
	    \draw[color=light-gray,fill=light-gray] (5, 1) -- (2, 2.7) -- (1.3, 1.85) -- (1.55, 2.8) --  (1.9,3) -- (2.5,3) -- cycle;
	    \draw[fill=light-gray,color=light-gray,rounded corners=5pt] (5, 1) -- (2, 2.7) -- (1.3, 1.85) -- (1.55, 3) --  (1.9,3.5) -- (2.5,3.5) -- cycle;
	    \draw[color=light-gray,fill=light-gray] (3.4, 1) -- (2, 1.55) -- (2,1) -- cycle;
	    \draw[color=light-gray,fill=light-gray] (2.7, 1.85) -- (2, 2.7) -- (2,1.55) -- cycle;
	    \draw[color=light-gray,fill=light-gray] (2.7, 0.15) -- (3.4, 1) -- (2,0.45) -- cycle;
	    \draw [-,color=red] (0.6, 1) -- (2,2.7);
	    \draw [-,color=red] (0.6, 1) -- (2,-0.7);
	    \draw [-,color=red] (3.4, 1) -- (2,2.7);
	    \draw [-,color=red] (3.4, 1) -- (2,-0.7);
	    \draw [-,color=red] (0.6, 1) -- (3.4,1);
	    \draw [-,color=blue] (2, 1) -- (2,1.55);
	    \draw [-,color=blue] (2, 1) -- (2,0.45);
	    \draw [-,color=blue] (2.7, 1.85) -- (2,1.55);
	    \draw [-,color=blue] (2.7, 0.15) -- (2,0.45);
	    \draw [-,color=blue] (1.3, 0.15) -- (2,0.45);
	    \draw [-,color=blue] (1.3, 1.85) -- (2,1.55);
	    \draw [-,color=blue,rounded corners=5pt] (1.3, 1.85) -- (1.55, 3) --  (1.9,3.5) -- (2.5,3.5) -- (5,1);
	    \draw [-,color=blue,rounded corners=5pt] (1.3, 0.15) -- (1.55, -1) --  (1.8,-1.5) -- (2.5,-1.5) -- (5,1);
	    \draw [-,color=blue] (2.7, 1.85) -- (5,1);
	    \draw [-,color=blue] (2.7, 0.15) -- (5,1);
	    \draw [-,color=green] (0.6, 1) -- (2,1.55);
	    \draw [-,color=green] (3.5, 1) -- (2,1.55);
	    \draw [-,color=green] (2, 2.7) -- (2,1.55);
	    \draw [-,color=green] (0.6, 1) -- (2,0.45);
	    \draw [-,color=green] (3.4, 1) -- (2,0.45);
	    \draw [-,color=green] (2, -0.7) -- (2,0.45);
	    \draw [-,color=green] (3.4, 1) -- (5,1);
	    \draw [-,color=green] (2, 2.7) -- (5,1);
	    \draw [-,color=green] (2, -0.7) -- (5,1);
	    \draw [-,color=green,rounded corners=5pt] (0.6, 1) -- (1, -2) -- (2.2, -2.5) -- (4, -1) -- (5,1);
	    \draw [->,rounded corners=5pt] (1.3, 1.85) -- (2, 3.2) -- (2.7,1.85);
	    \draw [->,rounded corners=5pt] (2.7,1.85) -- (1.3, 1.85);
	    \draw [->,rounded corners=5pt] (1.3, 1.85) -- (2,1);
	    \draw [->,rounded corners=5pt] (2, 1) -- (2.7, 1.85);
	    \draw [->,rounded corners=5pt] (2, 1) -- (1.3, 0.15);
	    \draw [->,rounded corners=5pt] (1.3, 0.15) -- (2.7,0.15);
	    \draw [->,rounded corners=5pt] (2.7, 0.15) -- (2, 1);
	    \draw [->,rounded corners=5pt] (2.7, 1.85) -- (4, 1) -- (2.7, 0.15);
	    \draw [->,rounded corners=5pt] (2.7, 0.15) -- (2, -1.2) -- (1.3, 0.15);
	    \draw [->,rounded corners=5pt] (1.3, 0.15) -- (0, 1) -- (1.3, 1.85);
	    
	    \draw [color=blue,fill=blue] (0.6, 1) circle [radius=0.1];
	    \draw [color=blue,fill=blue] (2, 2.7) circle [radius=0.1];
	    \draw [color=blue,fill=blue] (3.4, 1) circle [radius=0.1];
	    \draw [color=blue,fill=blue] (2, -0.7) circle [radius=0.1];
	    \draw [color=green,fill=green] (1.3, 1.85) circle [radius=0.1];
	    \draw [color=green,fill=green] (1.3, 0.15) circle [radius=0.1];
	    \draw [color=green,fill=green] (2.7, 1.85) circle [radius=0.1];
	    \draw [color=green,fill=green] (2.7, 0.15) circle [radius=0.1];
	    \draw [color=green,fill=green] (2, 1) circle [radius=0.1];
	    \draw [color= red,fill=red] (2, 1.55) circle [radius=0.1];
	    \draw [color= red,fill=red] (2, 0.45) circle [radius=0.1];
	    \draw [color= red,fill=red] (5, 1) circle [radius=0.1];
	    
	    \end{scope}
	  \end{tikzpicture}
		\caption{A plane graph and its dual (left panel), the corresponding trinity (middle panel), and the digraph $D_E$ (right panel).}
		\label{fig:trinity_from_a_graph}  	
	\end{figure}
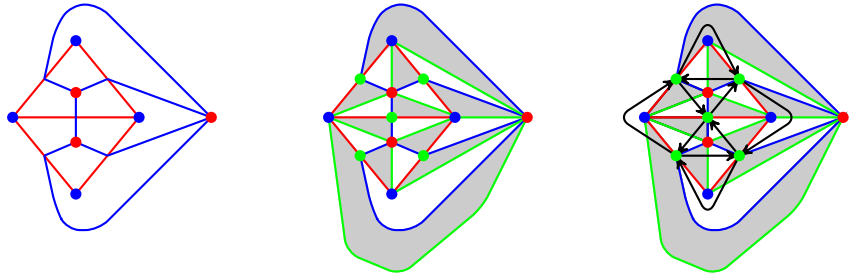
	
	
	By looking at the case of general trinities we can discover so far hidden symmetries, since in this case the roles of the three directed graphs $D_V$, $D_E$ and $D_R$ become completely interchangeable. 
	To be able to proceed in this setting,
	we need to find out what corresponds to the coset of spanning trees in this case. This turns out to the be the notion of \emph{hypertrees}, which first appeared in \cite{hiperTutte,alex}.
	In fact, hypertrees are a common generalization of spanning trees and break divisors. 
	We show that hypertrees form a system of representatives of a coset of the sandpile group of a balanced plane digraph (see Theorem \ref{thm:planar_hypertrees_represent}).
	This is a similar statement to the result by An, Baker, Kuperberg and Shokrieh \cite{ABKS} that break divisors form a system of representatives of a coset of the sandpile group of a graph.
	The two statements are equivalent in the case of plane graphs, but neither generalizes the other. 
	It is an open problem to give a common generalization for these two theorems.
	
	In the case of graphs, Baker and Wang defined the Bernardi action as the composition of the natural action of the sandpile group on break divisors and the Bernardi bijection between break divisors and spanning trees. To be able to examine this picture for balanced plane digraphs, we need a version of the Bernardi bijection that works between hypertrees.
	The version of the Bernardi bijection that we refer to was first defined by K\'alm\'an \cite{hiperTutte} and then recast by K\'alm\'an and Mura\-kami \cite{hitoshi} (relying on fundamental ideas of Postnikov \cite{alex}) in terms of a certain triangulation of the root polytope of a plane bipartite graph. Later it was generalized by K\'alm\'an and T\'othm\'er\'esz \cite{Hyper_Bernardi} to a bijection that works for any ribbon bipartite graph by constructing a certain dissection of its root polytope into simplices. This latter version contains as a special case Baker and Wang's bijection between the spanning trees and the break divisors of a ribbon graph.
	We prove that the Bernardi bijections between hypertrees on the three color classes commute with the natural sandpile actions (see Theorem \ref{thm:Bernardi_compaticle_w_sandpile_action}). 
	One consequence is that the Bernardi action of the sandpile group of a plane graph on its spanning trees agrees with the natural action of the sandpile group of the medial (di)graph, on the same spanning trees, via the natural isomorphism of the sandpile groups.
	
    
\subsection{Outline of the paper}
Section \ref{s:prelim} provides the necessary background. 
Subsection \ref{ss:action} introduces the sandpile group, and surveys the results of Baker and Wang.
Subsection \ref{ss:trinities} describes trinities and the graphs and digraphs associated to them. In Subsection \ref{ss:sandpile_group} we give the definition of the trinity sandpile group. In Subsection \ref{ss:hypertrees} we discuss hypertrees. Subsection \ref{ss:Jaeger} introduces Jaeger trees (another important technical tool for the paper), and the generalized Bernardi bijection. 

In Section \ref{sec:embedding_the_sandpile_group} we establish our embeddings of the sandpile groups into the trinity sandpile group, and we give the (canonical) definition of the  isomorphisms between the sandpile groups associated to a trinity. We also prove that one of these isomorphisms generalizes the canonical isomorphism between the sandpile group of a plane (undirected) graph and its dual, hence we obtain a more natural definition for the latter. (As far as we know, all previous definitions used an arbitrary orientation as auxiliary data.)
In Section \ref{sec:Bernardi_and_sandpile_actions} we show that the Bernardi bijection commutes with the natural sandpile actions, and we deduce a canonical definition for the Bernardi action for balanced plane digraphs. Using the canonical definition, we 
give our
proof for the compatibility of the Bernardi action with planar duality. In Section \ref{sec:representation} we 
show that hypertrees are a set of representatives of a coset of the sandpile group of the appropriate balanced plane digraph.
In Section \ref{sec:rotor} we provide background on rotor-routing, and prove that the rotor-routing action agrees with the Bernardi action for balanced plane digraphs.

{\bf Acknowledgment.} We benefited from conversations with Dylan Thurston. In particular, the idea that hypertrees can be used to represent elements of the sandpile group first occurred to him, before T\'othm\'er\'esz independently discovered the connection between hypertrees and break divisors.

\section{Preliminaries}\label{s:prelim}

\subsection{Basic definitions}
\label{ss:basic}

Throughout this paper, we assume all graphs and digraphs to be connected. We allow loops and multiple edges.
For a digraph $D$, we denote the outdegree of a node $v$ by $d^+(v)$. For two disjoint sets of nodes $U$ and $W$, we denote by $d(U,W)$ the number of directed edges having their tail in $U$ and their head in $W$. In particular, for vertices $u$ and $v$, we let $d(u,v)$ denote the number of edges pointing from $u$ to $v$.

A subgraph of an undirected graph is called a spanning tree if it is connected and cycle-free. A subgraph of a digraph is called an arborescence with root $r$ if we get a tree by forgettig its orientation, and each vertex is reachable on a directed path from vertex $r$.

For an undirected graph, a ribbon structure is the choice of a cyclic ordering of the edges around each vertex. If a graph is embedded into an orientable surface, the embedding gives a ribbon structure using the positive orientation of the surface, and conversely, for any ribbon graph there exists a closed orientable surface of minimal genus so that the graph embeds into it, giving the particular ribbon structure. For us, the most important case is the case of the graphs embedded into the plane (plane graphs). For an edge $xy$ of the graph, we denote by $xy^+$ the edge following $xy$ at $x$ according to the ribbon structure. For a digraph, a ribbon structure is a choice of a cyclic ordering of the union of in- and out-edges around each vertex.

The Laplacian matrix of a digraph is the following matrix $L_D \in \mathbb{Z}^{V \times V}$:
\[
L_D(u,v) = \left\{\begin{array}{cl} -d^+(v) & \text{if } u=v, \\
d(v, u) & \text{if } u\neq v.      
\end{array} \right.
\]

Let us introduce notations for some special vectors in $\mathbb{Z}^{|V|}$.
By $\mathbf{0}$, we denote the vector with all coordinates equal to zero, while by $\mathbf{1}$ the vector with all coordinates equal to one. For a set $S\subseteq V$, we let $\mathbf{1}_S$ denote the characteristic vector of $S$, i.e. $\mathbf{1}_S(v)=1$ for $v\in S$ and $\mathbf{1}_S(v)=0$ otherwise.

\subsection{Sandpile groups and the Bernardi action}\label{ss:sandpile_and_Bernardi}
\label{ss:action}

In this section we give the definition of the sandpile group and outline the results of Baker and Wang about the Bernardi action. As later on we will need the sandpile group of Eulerian digraphs, we give the definition for this broader case.

For an Eulerian digraph $D=(V,A)$, we denote by $\Div(D)$ the free Abelian group on $V$. For $x\in\Div(D)$ and $v\in V$, we use the notation $x(v)$ for the coefficient of $v$. We refer to $x$ as a chip configuration, and to $x(v)$ as the number of chips on $v$. We use the notation $\deg(x)=\sum_{v\in V} x(v)$, and call $\deg(x)$ the \emph{degree} of $x$.
We also write $\Div^d(D)=\{x\in \Div(D) : \deg(x)=d\}$.

We call two chip configurations $x$ and $y$ \emph{linearly equivalent} if there exists $z\in \mathbb{Z}^V$ such that $y=x + L_D z$. We use the notation $x\sim y$ for linear equivalence. Notice that, as for Eulerian digraphs we have $L_D \mathbf{1} = \mathbf{0}$, we can suppose that $z$ has nonnegative elements and $z(v)=0$ for some $v\in V$. Note also that linearly equivalent chip configurations have equal degree. We denote the linear equivalence class of a chip configuration $x$ by $[x]$.

There is an interpretation of linear equivalence using the so-called chip-firing game. In this game, a step consists of firing a node $v$. The firing of $v$ decreases the number of chips on $v$ by the outdegree of $v$, and increases the number of chips on each neighbor $w$ of $v$ by $d(v,w)$. It is easy to check that the firing of $v$ changes $x$ to $x+L_D \mathbf{1}_v$. Hence $x$ is linearly equivalent to $y$ if and only if there is a sequence of firings that transforms $x$ to $y$.

The Picard group of a digraph is the group of chip configurations factorized by linear equivalence: $\Pic(D)=\Div(D)/_\sim$. This is an infinite group. We will be interested in the subgroup corresponding to zero-sum elements, which is called the sandpile group.

\begin{defn}[Sandpile group]
	For an Eulerian digraph $D$, the sandpile group is defined as $\Pic^0(D)=\Div^0(D)/_\sim$.
\end{defn}

It is easy to see that $\Pic(D)=\Pic^0(D)\times \mathbb{Z}$.
The sandpile group is a finite group. We will need the following version of the matrix-tree theorem. 

\begin{fact}\cite{Holroyd}\label{fact:order_of_sandpile_group}
	For an Eulerian digraph $D$, the order of $\Pic^0(D)$ is equal to the number of arborescences rooted at an arbitrary vertex.
\end{fact}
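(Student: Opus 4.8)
The plan is to pass from the group $\Pic^0(D)$ to a reduced determinant of the Laplacian, and then to invoke the classical directed Matrix--Tree Theorem; the Eulerian hypothesis will be used twice, once to kill a row of the Laplacian and once to make the count base-point free.

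First I would rewrite the sandpile group as a cokernel. Since every column of $L_D$ sums to $0$, the image $L_D\Z^V$ lies in $\Div^0(D)$, so that $\Pic^0(D)=\Div^0(D)/L_D\Z^V$. Fix a vertex $r$. The projection $p\colon\Div^0(D)\to\Z^{V\setminus\{r\}}$ forgetting the $r$-coordinate is an isomorphism of free abelian groups, because $x(r)=-\sum_{v\neq r}x(v)$ is determined by the remaining coordinates. The columns $\{L_D\mathbf 1_v : v\in V\}$ generate $L_D\Z^V$, and since $L_D\mathbf 1=\mathbf 0$ (here the Eulerian hypothesis enters) the $r$-th column is redundant; the remaining columns map under $p$ exactly to the columns of the reduced Laplacian $\widetilde L_r$ obtained from $L_D$ by deleting the row and column indexed by $r$. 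Hence $\Pic^0(D)\cong \Z^{V\setminus\{r\}}/\widetilde L_r\,\Z^{V\setminus\{r\}}$, whose order is $|\det\widetilde L_r|$ provided this determinant is nonzero. Nonvanishing holds because a connected Eulerian digraph is strongly connected, so $\ker L_D=\langle\mathbf 1\rangle$ is one-dimensional and $\operatorname{rank}L_D=|V|-1$.

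Next I would identify $|\det\widetilde L_r|$ with the number of arborescences rooted at $r$. In its standard form, the directed Matrix--Tree Theorem states that the number of arborescences diverging from $r$ equals the principal minor, deleting the row and column of $r$, of the in-degree Laplacian $\widehat L=D^{\mathrm{in}}-A^{\mathsf T}$, where $A(u,v)=d(u,v)$ and $D^{\mathrm{in}}$ is the diagonal matrix of in-degrees. For an Eulerian digraph $d^+(v)=d^-(v)$ for all $v$, hence $\widehat L=-L_D$; deleting the same row and column gives $\widetilde{\widehat L}_r=-\widetilde L_r$, so the number of arborescences rooted at $r$ equals $(-1)^{|V|-1}\det\widetilde L_r=|\det\widetilde L_r|$, the sign working out because the count is a nonnegative integer. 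Combined with the first step this yields $|\Pic^0(D)|=|\det\widetilde L_r|$ equal to the number of arborescences rooted at $r$.

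Finally I would justify the phrase ``arbitrary vertex.'' Because $L_D\mathbf 1=\mathbf 0$ (Eulerian) and $\mathbf 1^{\mathsf T}L_D=\mathbf 0$ (always, as each column sums to $0$), the adjugate $\operatorname{adj}(L_D)$ has every column in $\ker L_D=\langle\mathbf 1\rangle$ and every row a multiple of $\mathbf 1^{\mathsf T}$, so it is a scalar multiple of the all-ones matrix; equivalently, all cofactors of $L_D$ coincide. In particular $\det\widetilde L_r$ is independent of $r$, which simultaneously supplies the base-point independence and shows that the arborescence count is the same for every root. I expect the only real friction to be bookkeeping: pinning down the orientation convention in the Matrix--Tree Theorem (in- versus out-arborescences) and tracking the sign $(-1)^{|V|-1}$, together with the standard but not wholly trivial fact that a connected Eulerian digraph is strongly connected, which is what guarantees $\det\widetilde L_r\neq 0$. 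Alternatively one may simply cite the directed Matrix--Tree Theorem and reduce the entire statement to the cokernel computation of the first step.
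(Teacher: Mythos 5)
Your argument is correct, but note that the paper does not actually prove this statement: it is quoted as a Fact from the Holroyd et al.\ reference, and the only in-text justification is the remark immediately following it, namely that $\Pic^0(D)\cong \mathbb{Z}^{|V|-1}/L'_D\mathbb{Z}^{|V|-1}$ because $L_D\mathbf{1}=\mathbf{0}$ and one restricts to degree-zero configurations. That remark is precisely your first paragraph (the cokernel reduction via the projection forgetting the $r$-coordinate and the redundancy of the $r$-th column), so your proof agrees with the paper as far as the paper goes and then supplies what the citation outsources: the directed Matrix--Tree step and the all-cofactors-equal argument. Your sign bookkeeping is right --- with the paper's convention $L_D(u,v)=d(v,u)$ for $u\neq v$ and $L_D(v,v)=-d^+(v)$, the Eulerian condition gives exactly $\widehat L=-L_D$, and the paper's arborescences are indeed the diverging (out-)arborescences your Matrix--Tree statement counts. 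One small logical wrinkle: in your second paragraph you assert $\det\widetilde L_r\neq 0$ from $\operatorname{rank}L_D=|V|-1$ alone, but rank $|V|-1$ does not by itself force a \emph{particular} principal $(|V|-1)\times(|V|-1)$ minor to be nonzero; you need either the adjugate argument from your final paragraph (a rank-$(|V|-1)$ matrix has nonzero adjugate, and all cofactors coincide, so each one is nonzero) or, more directly, the positivity of the arborescence count for a strongly connected digraph. Since you prove both facts anyway, the gap is only one of ordering, not of substance.
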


We note that \cite{Holroyd} defines the sandpile group of $D$ as $\mathbb{Z}^{|V|-1}/_{L'_D\mathbb{Z}^{|V|-1}}$ where $L'_D$ is the matrix obtained from the Laplace matrix by deleting the row and column corresponding to a vertex $v$. It is easy to see that this is equivalent to our definition since $L_D\mathbf{1}=0$ and we consider degree zero chip configurations in $\Pic^0(D)$.

We will use the notation $\Pic^d(D)$ for the set of equivalence classes of $\Pic(D)$ consisting of chip configurations of degree $d$.

If we have an undirected graph, we can apply the above definitions to the bidirected version of the graph, that is, where we substitute each undirected edge by two oppositely directed edges. 

Let us turn to the Bernardi action for undirected graphs. For an (undirected) graph $G$, the Bernardi action is an action of $\Pic^0(G)$ on the spanning trees of $G$. To define it, we first need the definition of the Bernardi bijection.

\begin{figure}
	\begin{center}
		\begin{tikzpicture}[-,>=stealth',auto,scale=0.6,
		thick]
		\tikzstyle{o}=[circle,draw]
		\node[o,label=right:{0}] (1) at (8, 0) {$v_1$};
		\node[o,label=above:{1}] (2) at (4, 1.2) {$v_2$};
		\node[o,label=left:{0}] (3) at (0, 0) {$v_3$};
		\node[o,label=below:{1}] (4) at (4, -1.2) {$v_4$};
		\path[every node/.style={font=\sffamily\small}, line width=0.8mm]
		(1) edge node [above] {$e_1$} (2)
		(3) edge node [below] {$e_3$} (4)
		(2) edge node {$e_5$} (4);
		\path[every node/.style={font=\sffamily\small},dashed]
		(4) edge node [below] {$e_4$} (1)
		(2) edge node [above] {$e_2$} (3);
		\end{tikzpicture}
	\end{center}
	\caption{An example of the tour of a spanning tree and the Bernardi bijection. Let the ribbon structure be the one induced by the positive (counterclockwise) orientation of the plane, $b_0=v_1$, and $b_1=v_2$. The tour of the tree of thick edges is $v_1e_1,v_2e_2,v_2e_5,v_4e_3,v_3e_2,$ $v_3e_3,v_4e_4,v_4e_5,v_2e_1,v_1e_4$. The Ber\-nardi bijection gives the break divisor indicated by the numbers.}
	\label{fig:tour_of_T}
\end{figure}
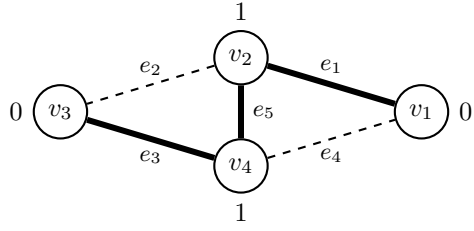 

The Bernardi bijection depends on a ribbon structure of $G$ and on a fixed vertex $b_0$ of $G$ and a fixed edge $b_0b_1$ incident to $b_0$. 
Using this data, to any spanning tree $T$, one can now associate a traversal of the graph, which is called the \emph{tour} of $T$. (This process was introduced by Bernardi \cite{Bernardi_first,Bernardi_Tutte}.)
The tour of $T$ is the following sequence of node-edge pairs: The current node at the first step is $b_0$, and the current edge is $b_0b_1$. If the current node is $x$, the current edge is $xy$, and $xy\notin T$, then the current node of the next step is $x$, and the current edge of the next step is $xy^+$. If the current node is $x$, the current edge is $xy$, and $xy\in T$, then the current node of the next step is $y$, and the current edge of the next step is $yx^+$. The tour stops when $b_0$ would once again become current node with $b_0b_1$ as current edge. (For an example, see Figure \ref{fig:tour_of_T}).
Bernardi proved the following:

\begin{lemma}\label{l:T-tour_cyclic_perm}
	\cite[Lemma 5]{Bernardi_first} In the tour of a spanning tree $T$, each edge $xy$ of $G$ becomes current edge twice, in one case with $x$ as current node, and in the other case with $y$ as current node.
\end{lemma}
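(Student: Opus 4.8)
The plan is to recast the tour as a permutation of the \emph{half-edges} of $G$ and to show this permutation is a single full cycle. Write $H$ for the set of all pairs $(v,e)$ with $v$ an endpoint of $e$; thus $|H|=2|E|$, and a loop at $v$ contributes two distinct elements of $H$. The ribbon structure defines a rotation $\rho(v,e)=(v,e^+)$, and the edges define the involution $\iota$ exchanging the two half-edges of a given edge (for a loop, $\iota$ swaps its two ends at the same vertex). The successor rule of the tour is then exactly the map $\sigma\colon H\to H$ given by $\sigma(v,e)=\rho(v,e)$ when $e\notin T$ and $\sigma(v,e)=\rho(\iota(v,e))$ when $e\in T$. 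The lemma is equivalent to the assertion that the $\sigma$-orbit of the starting half-edge $(b_0,b_0b_1)$ is all of $H$, since this is precisely the statement that each edge occurs as current edge twice, once from each endpoint.

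First I would check that $\sigma$ is a bijection by exhibiting the predecessor of an arbitrary half-edge $(w,f)$. Letting $g$ be the edge preceding $f$ at $w$ in the ribbon order, the unique preimage is $(w,g)$ if $g\notin T$ and $\iota(w,g)$ if $g\in T$; a short case check shows these are the only half-edges that $\sigma$ sends to $(w,f)$. Hence $\sigma$ is a permutation of $H$ and the tour is a well-defined cyclic sequence. This part is routine.

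The substantive step is to prove that $\sigma$ consists of a single cycle of length $2|E|$. Here I would argue topologically: embed $G$ in its ribbon surface and take a regular (ribbon) neighborhood $N(T)$ of the spanning tree. Because $T$ is contractible, $N(T)$ is a disk, so its boundary $\partial N(T)$ is a single circle. The key identification is that traversing $\partial N(T)$ realizes exactly the successor rule $\sigma$: along each tree edge the boundary runs on both sides, accounting for both half-edges of that edge with the current node changing as we cross, while in each angular gap between two consecutive tree edges at a vertex $v$ the boundary sweeps past precisely the non-tree half-edges at $v$ lying in that gap, which are the rotation steps keeping the node fixed. Since the single face of the tree $T$ visits every such corner exactly once, and every non-tree half-edge lies in exactly one such gap, each half-edge of $G$ is met exactly once along $\partial N(T)$. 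Thus the boundary walk, and hence $\sigma$, is a single cycle through all of $H$; this cycle contains the start half-edge, which proves the claim.

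I expect the main obstacle to be making the identification in the last paragraph fully rigorous, that is, carefully matching the combinatorics of $\partial N(T)$ (which corners and edge-sides it passes, in what cyclic order) with the successor rule of the tour, and in particular verifying that the single face of $T$ meets each corner once so that every non-tree half-edge is inserted exactly once. A purely combinatorial alternative avoiding the surface is an induction: establish the single-face property of the tree $T$ by leaf removal, then reinsert the non-tree edges one at a time, checking that each insertion splices the corresponding two half-edges into the existing cycle without disconnecting it. The bookkeeping in either route is the delicate point; the bijectivity of $\sigma$ and the reduction to a single-cycle statement are the easy parts.
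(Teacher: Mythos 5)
The paper does not actually prove this lemma: it is imported verbatim as \cite[Lemma 5]{Bernardi_first}, so there is no internal proof to compare against. Your argument is correct and is essentially the standard proof underlying Bernardi's lemma: encoding the tour as the permutation $\sigma$ of half-edges (with $\sigma=\rho$ off the tree and $\sigma=\rho\circ\iota$ on it), observing that bijectivity of $\sigma$ reduces the claim to $\sigma$ being a single $2|E|$-cycle, and getting the single cycle from the fact that a ribbon neighborhood of the contractible subgraph $T$ is a disk whose one boundary circle traverses each gap-arc at a vertex once and each side of each tree band once, which is exactly the successor rule. The identification you flag as delicate does go through (the gaps between consecutive tree ends at a vertex partition the non-tree ends there, and crossing a band from $x$ to $y$ deposits you just past the half-edge $yx$ in the rotation at $y$, matching $yx^+$); the only points worth writing out are the orientation convention matching $e\mapsto e^+$ with the direction of the boundary traversal, and the degenerate one-vertex case where $T$ has no edges. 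Your proposed combinatorial alternative (leaf contraction plus reinsertion of non-tree edges) is in fact closer to how Bernardi's own paper argues, so either route is faithful to the cited source.
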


For an edge $xy\notin T$, we say that $xy$ is cut through at $x$ during the tour of $T$ if it first becomes current edge with $x$ as current node.

The Bernardi bijection associates a chip configuration to any spanning tree by dropping a chip at the current vertex each time a nonedge of $T$ is cut through (see again Figure \ref{fig:tour_of_T}, also \cite{Baker-Wang}). Bernardi \cite{Bernardi_first} 
and Baker and Wang \cite{Baker-Wang} prove that this is a bijection between the spanning trees of $G$ and the so-called break divisors. We denote this bijection by $\beta_{b_0,b_1}$.

For a graph $G=(V,E)$, a chip configuration $x\in \Div(G)$ is a break divisor if there exists a spanning tree $T$ of $G$ such that $E-T=\{e_1,\dots e_g\}$ and there is a bijection between the edges $\{e_1,\dots e_g\}$ and the chips of $x$ such that each chip sits on one of the endpoints of the edge assigned to it.

Furthermore, it is proved in \cite{ABKS} that break divisors give a system of representatives of $\Pic^{|E|-|V|+1}(G)$.

\begin{thm}\cite{ABKS}\label{thm:break_eredeti_represent}
	For an undirected graph $G=(V,E)$, the set of break divisors form a system of representatives of linear equivalence classes of $\Pic^{|E|-|V|+1}(G)$.
\end{thm}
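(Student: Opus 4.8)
The plan is to show that the assignment $D\mapsto[D]$ from break divisors to $\Pic^{g}(G)$, with $g=|E|-|V|+1$, is a bijection, and to reduce this to a single injectivity statement via a counting argument. First note that every break divisor has degree exactly $g$: its witnessing spanning tree has exactly $g$ non-tree edges, each contributing one chip, so break divisors lie in $\Div^{g}(G)$ and their classes in $\Pic^{g}(G)$. The two sides have equal (finite) cardinality. Indeed, applying Fact~\ref{fact:order_of_sandpile_group} to the bidirected graph $G$ shows that $|\Pic^{0}(G)|=|\Pic^{g}(G)|$ equals the number of arborescences rooted at a fixed vertex, which is the number of spanning trees of $G$; and the Bernardi bijection $\beta_{b_{0},b_{1}}$ puts break divisors in bijection with spanning trees. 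Hence it suffices to prove that $D\mapsto[D]$ is injective, i.e.\ that no two distinct break divisors are linearly equivalent; surjectivity then follows for free from equal cardinalities.

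The key tool I would set up is a subset characterization of break divisors: a divisor $D\ge 0$ of degree $g$ is a break divisor if and only if $\deg(D|_{A})\ge g(G[A])$ for every nonempty $A\subseteq V$, where $g(G[A])=|E(A)|-|A|+c(G[A])$ is the genus of the induced subgraph (and it suffices to check connected $A$). The forward implication is a direct count: given a witnessing tree $T$, the non-tree edges with both endpoints in $A$ already number at least $g(G[A])$, and each drops its chip inside $A$. The converse is a transversal/matroid-union argument, since these are exactly the deficiency-type inequalities guaranteeing that one can simultaneously choose a spanning tree and assign each non-tree edge to an endpoint carrying a prescribed chip. Passing to complements, the characterization is equivalent to the upper bounds $\deg(D|_{A})\le g-g(G[V\setminus A])$.

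For injectivity, suppose $D\sim D'$ are break divisors with $D'=D+L_{G}z$ for a non-constant $z\in\mathbb{Z}^{V}$, where $L_{G}$ is the Laplacian of the bidirected graph. Writing $z$ through its threshold sets $A_{1}\supseteq\cdots\supseteq A_{m}$, all proper and nonempty, we have $L_{G}z=\sum_{k}L_{G}\mathbf{1}_{A_{k}}$, and firing a set moves one chip across each boundary edge out of the set. I expect this to be the main obstacle: a single extremal threshold set need not by itself witness a violated inequality, because $\deg(D|_{A})$ may sit strictly above its lower bound. The way I would resolve it is to locate the offending set adaptively, by a Dhar-type burning argument that selects, among the threshold sets, one on which the net outflow forces $\deg(D'|_{A})<g(G[A])$, contradicting the characterization of the previous paragraph applied to $D'$. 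A cleaner, more conceptual alternative---and the route I would ultimately prefer---is the polyhedral argument: project the cube $[0,1]^{E}$ of edge-indicator vectors into the real Jacobian torus of $G$; the spanning trees index a tiling of this torus by parallelepipeds, and the break divisors are precisely the lattice points lying in one half-open fundamental domain, so each class contains exactly one. In that approach the technical heart becomes verifying that the projected cube tiles the torus with tiles indexed by spanning trees.
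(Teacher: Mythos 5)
First, a point of comparison: the paper does not prove this statement --- it is imported verbatim from \cite{ABKS} --- so there is no in-paper argument to measure yours against; the ``polyhedral alternative'' you sketch at the end is, in essence, the actual proof in that reference (the tiling of the tropical Jacobian by spanning-tree parallelepipeds). Judged on its own, your write-up is an outline rather than a proof, and both load-bearing steps are left open. The converse half of your inequality characterization of break divisors (that $D\ge 0$ of degree $g$ with $\deg(D|_A)\ge g(G[A])$ for all nonempty $A$ is a break divisor) is only asserted via ``a transversal/matroid-union argument''; this is a genuine Rado-type lemma for the cographic matroid and must be carried out. More importantly, the injectivity step --- the real content of the theorem --- is explicitly deferred to an unspecified adaptive ``Dhar-type'' selection, and your fallback geometric route defers its own ``technical heart'' (the tiling statement), which is exactly the hard part of \cite{ABKS}. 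There is also a circularity risk in your counting step: Baker and Wang's proof in \cite{Baker-Wang} that $\beta_{b_0,b_1}$ surjects onto break divisors itself invokes the present theorem to get the count. Within this paper you should instead obtain $\#\{\text{break divisors}\}=\#\{\text{spanning trees}\}$ from Proposition \ref{prop:hypertree_break_div_relationship} combined with Theorem \ref{thm:Jaeger_trees_represent_hypertrees}, which are established independently.

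The good news is that the obstacle you flag in the injectivity step is not actually there, so your first route closes without any adaptive argument. Suppose $D'=D+L_Gz$ with $D,D'$ break divisors and $z$ non-constant; normalize $z\ge 0$ with $\min z=0$ and set $A=\{v:z(v)\ge 1\}$, a nonempty proper subset. Writing $\partial A$ for the set of edges between $A$ and $V\setminus A$, each such edge $uv$ with $u\in A$ contributes $z(v)-z(u)=-z(u)\le -1$ to $\sum_{u\in A}(L_Gz)(u)$ while internal edges cancel, so $\deg(D'|_A)\le\deg(D|_A)-|\partial A|$. Combining this with the complementary bound $\deg(D|_A)=g-\deg(D|_{V\setminus A})\le g-g(G[V\setminus A])$ and the identity $|E|=|E(A)|+|E(V\setminus A)|+|\partial A|$ yields $\deg(D'|_A)\le |E(A)|-|A|+1-c(G[V\setminus A])=g(G[A])+1-c(G[A])-c(G[V\setminus A])\le g(G[A])-1$, where $c(\cdot)$ counts connected components; this violates your characterization applied to $D'$. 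So the single threshold set $A$ already witnesses the contradiction, and what remains to make the proof complete is the converse direction of the characterization (or, on the geometric route, the tiling theorem).
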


For a graph $G$, the sandpile group $\Pic^0(G)$ acts on $\Pic^{|E|-|V|+1}(G)$ by addition: For $x\in \Pic^0(G)$ and $z\in \Pic^{|E|-|V|+1}(G)$, we put $x\cdot z= x+z$. Since by Theorem \ref{thm:break_represent}, the break divisors give a system of representatives for $\Pic^{|E|-|V|+1}(G)$, we can think of this natural action as the action of $\Pic^0(G)$ on the break divisors: for $x\in \Pic^0(G)$ and a break divisor $f$, we have $x\cdot f = x \oplus f$, where by $x\oplus f$ we denote the unique break divisor in the linear equivalence class of $x + [f]$, which exists by Theorem \ref{thm:break_eredeti_represent}. We call this group action the \emph{sandpile action}. 

The Bernardi action is defined by pulling the sandpile action of $\Pic^0(G)$ from the break divisors to the spanning trees by using a Bernardi bijection:
$x\cdot T=\beta^{-1}_{b_0,b_1}(x\oplus \beta_{b_0,b_1}(T))$, where $x\in \Pic^0(G)$ and $T$ is a spanning tree of $G$.

Baker and Wang prove that this group action does not depend on the choice of $b_1$, moreover, it is independent of $b_0$ if and only if the ribbon structure is planar. 
For planar graphs, Baker and Wang also prove the compatibility of the Bernardi action with planar duality. Let us explain this statement. 
It is well-known that there is a canonical bijection between the spanning trees of a plane graph and its dual: To any spanning tree $T$ of $G$, we can associate $T^*=\{e^* : e\notin T\}$.
Also, for a planar graph $G$, there is a canonical isomorphism $i:\Pic^0(G) \to \Pic^0(G^*)$ (see \cite{cori-rossin}). 
Baker and Wang proved that the Bernardi action of plane graphs satisfies $(x \cdot T)^* = i(x) \cdot T^*$. 

Let us repeat the definition of $i$ as given in \cite{Baker-Wang}. Let $G$ be a planar undirected graph. We need to fix an orientation $\overrightarrow{e}$ for each edge $e$. Now orient each edge $e^*$ of $G^*$ so that the corresponding edge $\overrightarrow{e}$ of $G$ has to be turned in the negative direction to get the orientation of $\overrightarrow{e}^*$. For an edge $\overrightarrow{e}$ of $G$, let $\delta_{\overrightarrow{e}}\in \mathbb{Z}^V$ be the vector that has coordinate one on the head of $\overrightarrow{e}$, minus one on the tail of $\overrightarrow{e}$, and zero otherwise. For any $g\in \Div^0(G)$, one can find integers $\{a_{\overrightarrow{e}}: e\in E\}$ such that $\sum_{e\in E} a_{\overrightarrow{e}} \delta_{\overrightarrow{e}} = g$. Moreover, two collections of coefficients $\{a_{\overrightarrow{e}}: e\in E\}$ and $\{b_{\overrightarrow{e}}: e\in E\}$ give linearly equivalent chip configurations if and only if $\{a_{\overrightarrow{e}} - b_{\overrightarrow{e}}: e\in E\}$ can be written as the sum of an integer flow in $G$ and an integer flow in $G^*$. Now for $[g] \in \Pic^0(G)$, the image $i([g])$ is defined as $[\sum_{e\in E} a_{\overrightarrow{e}} \delta_{\overrightarrow{e}^*}]$. It can be shown that this is a well defined mapping, which is an isomorphism, and it is independent of the orientation we chose. For more details, see \cite{Baker-Wang} and its references.

In this paper, we analyze the sandpile groups and Bernardi actions of plane graphs (and more generally, balanced plane digraphs) by examining trinities. We give a canonical (orientation-free) definition to the above isomorphism $i$, and also a canonical definition for the Bernardi action of balanced plane digraphs. This definition yields a very short proof for the compatibility of the Bernardi action with planar duality. 

\subsection{Trinities}\label{ss:trinities}

See Figure \ref{f:trinity_ex} and Figure \ref{fig:trinity_from_a_graph} for examples (drawn in the plane) of the following notion.

\begin{defn}[Trinity]
A \emph{trinity} is a triangulation of the sphere $S^2$ together with a three-coloring of the $0$-simplices. (I.e., $0$-simplices joined by a $1$-simplex have different colors.) According to dimension, we will refer to the simplices as \emph{points, edges}, and \emph{triangles}.
\end{defn}

We will use the names red, emerald, and violet for the colors in the trinity and denote the respective sets of points by $R$, $E$, and $V$. Let us color each edge in the triangulation with the color that does not occur among its ends. Then $E$ and $V$ together with the red edges form a bipartite graph that we will call the \emph{red graph} and denote it by $G_R$. Each region of the red graph contains a unique red point. Likewise, the \emph{emerald graph} $G_E$ has red and violet points, emerald edges, and regions marked with emerald points. Finally, the \emph{violet graph} $G_V$ contains $R$ and $E$ as vertices, violet edges, and a violet point in each of its regions.

There are two types of triangles in a trinity: the red, emerald and violet nodes either follow each other in clockwise or counterclockwise order. Let us color a triangle white if the order is clockwise and let us color it black otherwise. Then any two triangles sharing an edge have different colors.

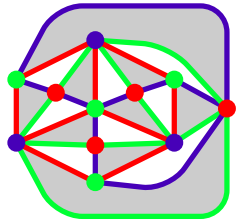
\begin{figure}
	\begin{center}
		\begin{tikzpicture}[-,>=stealth',auto,scale=0.35,
		thick]
		\draw[fill=light-gray,color=light-gray] (0, 6.2) -- (2.8, 6) -- (5, 3.6) -- (4.6, 7) -- (-1, 7) -- (-3, 4.7) -- cycle;
		\draw[color=light-gray,fill=light-gray,rounded corners=10pt] (0, 6.2) -- (2.8, 6) -- (5, 3.6) -- (5, 7.5) -- (-1.5, 7.5) -- (-3, 4.7) -- cycle;
		\draw[color=light-gray,fill=light-gray,rounded corners=10pt] (0, 0.8) -- (2.8, 0.5) -- (5, 3.6) -- (5, -0.5) -- (-1.5, -0.5) -- (-3, 2.3) -- cycle;
		\draw[color=light-gray,fill=light-gray] (0, 0.8) -- (2.8, 0.5) -- (5, 3.6) -- (4.7, 0) -- (-1, 0) -- (-3, 2.3) -- cycle;
		\draw[fill=light-gray] (-3, 4.7) -- (-1.5, 4.2) -- (-3, 2.3) -- cycle;
		\draw[fill=light-gray] (0, 3.6) -- (-1.5, 4.2) -- (0, 6.2) -- cycle;
		\draw[fill=light-gray] (0, 3.6) -- (-3, 2.3) -- (0,2.2) -- cycle;
		\draw[fill=light-gray] (0, 0.8) -- (3, 2.3) -- (0,2.2) -- cycle;
		\draw[fill=light-gray] (0, 3.6) -- (3, 2.3) -- (1.5, 4.2) -- cycle;
		\draw[fill=light-gray] (3, 4.7) -- (0, 6.2) -- (1.5, 4.2) -- cycle;
		\draw[fill=light-gray] (3, 4.7) -- (3, 2.3) -- (5,3.6) -- cycle;
		\draw[-, line width=0.7mm, color=e] (1.5, 4.2) to (3, 2.3);
		\draw[-, line width=0.7mm, color=v] (1.5, 4.2) to (3, 4.7);
		\draw[-, line width=0.7mm, color=e] (1.5, 4.2) to (0, 6.2);
		\draw[-, line width=0.7mm, color=v] (1.5, 4.2) to (0, 3.6);
		\draw[-, line width=0.7mm, color=v] (-1.5, 4.2) to (0, 3.6);
		\draw[-, line width=0.7mm, color=e] (-1.5, 4.2) to (0, 6.2);
		\draw[-, line width=0.7mm, color=v] (-1.5, 4.2) to (-3, 4.7);
		\draw[-, line width=0.7mm, color=e] (-1.5, 4.2) to (-3, 2.3);
		\draw[-, line width=0.7mm, color=v] (0, 2.2) to (0, 3.6);
		\draw[-, line width=0.7mm, color=e] (0, 2.2) to (-3, 2.3);
		\draw[-, line width=0.7mm, color=v] (0, 2.2) to (0, 0.8);
		\draw[-, line width=0.7mm, color=e] (0, 2.2) to (3, 2.3);
		\draw[-, line width=0.7mm, color=e] (5, 3.6) to (3, 2.3);
		\draw[-, line width=0.7mm, color=v] (3, 4.7) to (5, 3.6);
		\draw[-, line width=0.7mm, color=e,rounded corners=10pt] (5, 3.6) -- (2.8, 6) -- (0, 6.2);
		\draw[-, line width=0.7mm, color=v, rounded corners=10pt] (-3, 4.7) -- (-1.5, 7.5) -- (5, 7.5) -- (5, 3.6);
		\draw[-, line width=0.7mm, color=v,rounded corners=10pt] (5, 3.6) -- (2.8, 0.5) -- (0, 0.8);
		\draw[-, line width=0.7mm, color=e, rounded corners=10pt] (-3, 2.3) -- (-1.5, -0.5) -- (5, -0.5) -- (5, 3.6);
		\path[every node/.style={font=\sffamily\small},color=red,line width=0.7mm]
		(-3, 4.7) edge node {} (0, 6.2)
		(3, 2.3) edge node {} (3, 4.7)
		(3, 2.3) edge node {} (0, 3.6)
		(0, 0.8) edge node {} (-3, 2.3)
		(3, 2.3) edge node {} (0, 0.8)
		(-3, 2.3) edge node {} (-3, 4.7)
		(0, 6.2) edge node {} (3, 4.7)
		(0, 3.6) edge node {} (-3, 2.3)
		(0, 6.2) edge node {} (0, 3.6);
		\draw [color=e,fill=e] (0, 0.8) circle [radius=0.3];
		\draw [color=v,fill=v] (3, 2.3) circle [radius=0.3];
		\draw [color=v,fill=v] (-3, 2.3) circle [radius=0.3];
		\draw [color=e,fill=e] (3, 4.7) circle [radius=0.3];
		\draw [color=e,fill=e] (-3, 4.7) circle [radius=0.3];
		\draw [color=v,fill=v] (0, 6.2) circle [radius=0.3];
		\draw [color=e,fill=e] (0, 3.6) circle [radius=0.3];		
		\draw [color=r,fill=r] (5, 3.6) circle [radius=0.3];
		\draw [color=r,fill=r] (1.5, 4.2) circle [radius=0.3];
		\draw [color=r,fill=r] (-1.5, 4.2) circle [radius=0.3];
		\draw [color=r,fill=r] (0, 2.2) circle [radius=0.3];
		\end{tikzpicture}
	\end{center}
	\caption{A trinity}
	\label{f:trinity_ex}
\end{figure}

We can also associate three directed graphs $D_V, D_E$ and $D_R$ to a trinity: The node set of $D_V$ is $V$, and a directed edge points from $v_1\in V$ to $v_2\in V$ if a black triangle incident to $v_1$ and a white triangle incident to $v_2$ share their violet edge. Note that the outdegree of a node is equal to the number of black triangles incident to it, and the indegree of a node is equal to the number of white triangles incident to it. Hence $D_V$ is Eulerian, moreover, it is embedded into the plane so that around each node, in- and out-edges alternate (as the white and the black triangles also alternate). We call such embeddings \emph{balanced}.
We define $D_E$ and $D_R$ similarly, and these are also balanced plane (hence Eulerian) digraphs by the same argument.

As an important special case, we can construct a trinity from a plane graph $G$ in the following way. (See Figure \ref{fig:trinity_from_a_graph} for an example.) Let $V$ be the set of vertices of the graph, and subdivide each edge by a new emerald node. The resulting bipartite graph is $G_R$. Then place a red node in the interior of each region of the plane graph. 
Traverse the boundary of each region of $G_R$ and at each corner of the boundary, connect the emerald or violet node to the red node of the region. Notice that in this case $G_V$ can be obtained from $G^*$, the planar dual of $G$, by subdividing each edge with an emerald node. Moreover, the directed graphs $D_V$ and $D_R$ can be obtained from $G$ and $G^*$ respectively by substituting each edge with two oppositely directed edges. 

In general, it is easy to check that digraphs arising as $D_V$ for a trinity are exactly the balanced plane digraphs. We already pointed out that for a trinity, $D_V$ is a balanced plane digraph. Moreover, for a balanced embedding of a digraph, boundaries of all regions are oriented cycles, and they can be two-colored with respect to the orientation of the cycle. It is easy to check that if we place a red node in all clockwise oriented regions and an emerald node in all counterclockwise oriented regions, moreover, connect each red and emerald node to all violet nodes along the boundary of their respective region, finally connect a red and a violet node if they occupy neighboring regions, then we obtain a trinity one of whose balanced digraphs is the one we started with.

\subsection{The trinity sandpile group}\label{ss:sandpile_group}

Since there are three (planar) digraphs $D_V$, $D_E$, and $D_R$ associated to a trinity, there are also three sandpile groups naturally associated to trinities: $\Pic^0(D_V)$, $\Pic^0(D_E)$, and $\Pic^0(D_R)$. It will turn out that these three groups are isomorphic and we will obtain natural isomorphisms between them using a group that we call the trinity sandpile group, and which appeared first in \cite{CW}. First we need some preparation.

\begin{defn}[$\mathcal{A}$]
	Let $\mathcal{A}$ be the free Abelian group on the set $V\cup E \cup R$. We describe the elements of $\mathcal{A}$ by vector triples $(x_V,x_E,x_R)$, where $x_V\in\mathbb{Z}^V$, $x_E\in\mathbb{Z}^E$, and $x_R\in\mathbb{Z}^R$.
\end{defn}

\begin{defn}[white triangle equivalence]
	Two elements of $\mathcal{A}$ are said to be white triangle equivalent if their difference can be written as an integer linear combination of characteristic vectors of white triangles. We denote white triangle equivalence by $\wapr$.
\end{defn}

Note that $\wapr$ is indeed an equivalence relation.
Now one can define a group by factorizing with white triangle equivalence.

\begin{defn}[$\mathcal{A}_W$, \cite{CW}]
	$\mathcal{A}_W=\mathcal{A}/_{\wapr}$.
\end{defn}

Blackburn and McCourt proved that $\mathcal{A}_W$ is isomorphic to the direct product of 
$\mathbb{Z}^2$ and $\Pic^0(D_V)$. In Section \ref{sec:embedding_the_sandpile_group}, we give a very simple and natural embedding of the sandpile groups $\Pic^0(D_V), \Pic^0(D_E)$ and $\Pic^0(D_R)$ as a subgroup of $\mathcal{A}_W$, which also yields combinatorially nice isomorphisms between these sandpile groups.
From now on, we will call $\mathcal{A}_W$ the trinity sandpile group.

\subsection{Hypertrees}\label{ss:hypertrees}

The following notion will be very important for us. 
It appeared first in \cite{alex} (as a `draconian sequence' or a `degree vector'), then again in \cite{hiperTutte}.

\begin{defn} \cite{alex,hiperTutte}
\label{def:hypertree}
Let $H$ be a bipartite graph and $U$ one of its vertex classes. We say that the vector $f\colon U\to\mathbb{Z}_{\ge0}$ is a \emph{hypertree} on $U$ if there exists a spanning tree $T$ of $H$ that has degree $d_T(u)=f(u)+1$ at each node $u\in U$.
We denote the set of all hypertrees of $H$ on $U$ by $B_{U}(H)$.
\end{defn}

For a spanning tree $T$ of the bipartite graph $H$, we denote by $f_U(T)$ the hypertree on $U$ \emph{realised} or \emph{induced} by $T$, i.e.,
$$
f_U(T)(u)=d_T(u)-1  \quad \forall u\in U.
$$ 

The name hypertree comes from the fact that hypertrees generalize (characteristic vectors of) spanning trees from graphs to hypergraphs in the following sense. A bipartite graph $H$ always induces a hypergraph where one partite class of the bipartite graph corresponds to the vertices of the hypergraph, the other partite class corresponds to the hyperedges, and the edges of $H$ correspond to containment. If we think of $U$ as the set of hyperedges, then a hypertree is a function assigning multiplicities to hyperedges. In the special case where the hypergraph is a graph $G$ (i.e., the bipartite graph $H$ is obtained by subdividing each edge of $G$ by a new point), and $U$ is the partite class of the subdividing points (i.e., it corresponds to the edges of $G$), then the hypertrees on $U$ are exactly the characteristic vectors of the spanning trees of $G$ (cf.\ \cite[Remark 3.2]{hiperTutte}). 

In particular, if a trinity is derived from a plane graph $G$, then the hypertrees of $G_R$ on $E$ are exactly the characteristic vectors of the spanning trees of $G$, while the hypertrees of $G_V$ on $E$ are exactly the characteristic vectors of the spanning trees of $G^*$.

It is also fruitful to think of hypertrees as chip configurations. Indeed, it turns out that the notion of break divisors is in fact a special case of hypertrees.

\begin{prop}\label{prop:hypertree_break_div_relationship}
	For a graph $G=(V,E)$,
	$x\in \mathbb{Z}^V$ is a break divisor on $G$ if and only if $d_G-\mathbf{1}-x$ is a hypertree of $\bip G$ on $V$ where $\bip G$ is obtained from $G$ by subdividing each edge with a new point, and $d_G$ is the vector assigning its degree to each vertex of $G$.
\end{prop}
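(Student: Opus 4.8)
The plan is to construct an explicit bijection between the spanning trees of $\bip G$ and the combinatorial data that defines a break divisor, namely a spanning tree $T$ of $G$ together with a choice, for each edge $e\in E\setminus T$, of one of its endpoints. I then read off the required degree identity and conclude both implications at once.

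First I would analyse the structure of an arbitrary spanning tree $T'$ of $\bip G$. Writing $e$ for an edge $uv$ of $G$ as well as for the point it becomes in $\bip G$, each such point has degree $2$ in $\bip G$, so in $T'$ it has degree $1$ or $2$ (not $0$, as $T'$ is spanning). Let $S\subseteq E$ be the set of points of degree $2$. Since every edge of $\bip G$ is incident to exactly one subdivision point, summing degrees over these points counts all edges of $T'$ once, giving $|S|+|E|=|V|+|E|-1$, hence $|S|=|V|-1$ and $|E\setminus S|=g$. A degree-$2$ point $e=uv$ contributes the path $u\,e\,v$ to $T'$, while a degree-$1$ point is a leaf attached to exactly one of its endpoints. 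Deleting the leaves (which keeps a tree) and then contracting each remaining degree-$2$ point $e$ to the edge $uv$ turns $T'$ into a connected subgraph of $G$ on edge set $S$; as $|S|=|V|-1$ this subgraph is a spanning tree $T=S$ of $G$, and the degree-$1$ points are precisely the non-tree edges $E\setminus T$, each attached in $T'$ to one endpoint.

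Next I would compute degrees. For $v\in V$, the edges of $T'$ at $v$ are the $d_T(v)$ edges $ve$ with $e\in T$ together with the edges $ve$ where $e\in E\setminus T$ is a non-tree edge attached to $v$. Declaring that a non-tree edge attached to one endpoint drops its chip on the \emph{other} endpoint, i.e.\ setting $x(v)$ to the number of non-tree edges incident to $v$ but not attached to $v$, makes $x$ a break divisor with witnessing tree $T$, and using $d_G(v)=d_T(v)+(\text{non-tree edges at }v)$ yields
\[
d_{T'}(v)=d_T(v)+\bigl(\text{non-tree edges attached to }v\bigr)=d_G(v)-x(v).
\]
Thus $d_{T'}(v)-1=(d_G-\mathbf{1}-x)(v)$ for all $v$, i.e.\ $d_G-\mathbf{1}-x=f_V(T')$. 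Running the construction forward shows every break divisor $x$ produces such a $T'$, so $d_G-\mathbf{1}-x$ is a hypertree; running it backward — extracting $T$ and the attachment from a given spanning tree $T'$ realising $f=d_G-\mathbf{1}-x$ — shows the converse, with the recovered break divisor determined by $x(v)=d_G(v)-d_{T'}(v)$ and hence equal to the original $x$.

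The main obstacle is the structural claim that the degree-$2$ subdivision points form a spanning tree of $G$; everything else is bookkeeping. The delicate point there is the connectivity argument (that removing the degree-$1$ leaves of $T'$ and suppressing the remaining degree-$2$ points leaves a connected, hence spanning, tree on $V$). One must also respect the "other endpoint" convention in the definition of $x$: attaching the chip to the attached endpoint instead would give $d_{T'}(v)=d_T(v)+x(v)$ and fail to match $d_G-\mathbf{1}-x$.
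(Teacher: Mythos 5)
Your proof is correct and takes essentially the same route as the paper: both establish the bijection between spanning trees $T'$ of $\bip G$ and pairs (spanning tree $T$ of $G$, attachment of each non-tree edge to an endpoint), with the same convention that the chip sits at the endpoint \emph{opposite} the attachment, yielding $d_{T'}=d_G-x$ on $V$. You merely supply the edge-count and connectivity details that the paper dismisses as ``easy to see.''
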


\begin{proof} 
	Let us call the ``subdividing'' nodes of $\bip G$ the \emph{emerald} nodes. These are in bijection with the edges of $G$. Let us call the original vertices of $G$ the \emph{violet nodes}.
	If $x$ is a break divisor then let us take a spanning tree $T$ of $G$ witnessing this fact. Let $T'$ be the spanning tree of $\bip G$ where we take both edges incident to an emerald node corresponding to an edge of $T$, and for an emerald node corresponding to a nonedge $e$ of $T$, we take the edge incident to $e$ which leads to the violet endpoint of $e$ not containing its chip.
	It is easy to see that $T'$ is a spanning tree of $\bip G$, moreover, its degree sequence on $V$ is $d_G-x$, which shows that $d_G-\mathbf{1}-x$ is a hypertree.
	
	Conversely, if we have a hypertree $h$ of $\bip G$ on $V$, we may consider a realizing spanning tree $T'$ of $\bip G$. It is clear that for the emerald nodes that have degree $2$ in $T'$, the corresponding edges of $G$ form a spanning tree $T$, and $T$ witnesses that $d_G-\mathbf{1}-h$ is a break divisor.
\end{proof}

Theorem \ref{thm:break_eredeti_represent} can be restated in terms of hypertrees in the following form.

\begin{thm}\cite{ABKS}\label{thm:break_represent}
	For any (not necessarily planar) graph $G=(V,E)$, let $\bip G$ be the bipartite graph where we subdivide each edge with a new node. Let $V$ be the set of nodes corresponding to the original vertices of $G$. Then the set of hypertrees of $\bip G$ on $V$ forms a system of representatives of the linear equivalence classes of $\Pic^{|E|-1}(G)$. \hfill\qedsymbol
\end{thm}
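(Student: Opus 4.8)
The plan is to transport the content of Theorem \ref{thm:break_eredeti_represent} across the explicit affine bijection already supplied by Proposition \ref{prop:hypertree_break_div_relationship}. Define $\phi\colon \Div(G)\to\Div(G)$ by $\phi(x)=d_G-\mathbf 1-x$, where we regard a hypertree of $\bip G$ on $V$ as a chip configuration on the vertex set $V$ of $G$, i.e.\ as an element of $\Div(G)$. This $\phi$ is an affine involution, since $\phi(\phi(x))=d_G-\mathbf 1-(d_G-\mathbf 1-x)=x$, and by Proposition \ref{prop:hypertree_break_div_relationship} it restricts to a bijection between the set of break divisors of $G$ and the set of hypertrees of $\bip G$ on $V$ (both directions of the biconditional in that proposition being exactly the statements $\phi(\text{break divisor})=\text{hypertree}$ and $\phi(\text{hypertree})=\text{break divisor}$).

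First I would pin down the degrees so that the target group $\Pic^{|E|-1}(G)$ is correct. A break divisor has degree $|E|-|V|+1$, and since $\deg(d_G-\mathbf 1)=2|E|-|V|$, its image $\phi(x)$ has degree $(2|E|-|V|)-(|E|-|V|+1)=|E|-1$. This matches the intrinsic coordinate sum of a hypertree on $V$: every edge of the bipartite graph $\bip G$ meets $V$ in exactly one endpoint, so a realising spanning tree $T'$ contributes $|E(\bip G\text{-tree})|=|E|$ edges to the $V$-degrees, whence $\sum_{v\in V}f_V(T')(v)=|E|-1$. Thus $\phi$ carries $\Div^{|E|-|V|+1}(G)$ onto $\Div^{|E|-1}(G)$.

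Next I would check that $\phi$ respects linear equivalence, so that it descends to the Picard level. If $y=x+L_Gz$, then $\phi(y)=d_G-\mathbf 1-x-L_Gz=\phi(x)+L_G(-z)$, so $x\sim y$ if and only if $\phi(x)\sim\phi(y)$. Hence $\phi$ induces a (degree-shifting) bijection $\bar\phi\colon \Pic^{|E|-|V|+1}(G)\to\Pic^{|E|-1}(G)$, which is itself an involution in the obvious sense. Now I would combine the pieces: given a class $c'\in\Pic^{|E|-1}(G)$, set $c=\bar\phi(c')$; by Theorem \ref{thm:break_eredeti_represent} there is a unique break divisor $x$ in $c$, and then $\phi(x)$ is a hypertree lying in $\bar\phi(c)=c'$. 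Conversely, if $h\in c'$ is any hypertree, then $\phi(h)$ is a break divisor in $c$, so $\phi(h)=x$ and $h=\phi(x)$. Therefore each class of $\Pic^{|E|-1}(G)$ contains exactly one hypertree, which is precisely the assertion.

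The entire mathematical content is imported from Proposition \ref{prop:hypertree_break_div_relationship} and Theorem \ref{thm:break_eredeti_represent}, so I do not expect a substantive obstacle; the argument is essentially bookkeeping. The only points demanding care are that $\phi$ is a degree-\emph{shifting} affine involution rather than a group homomorphism, and that the notion of a ``system of representatives'' is indeed preserved under a bijection of divisors that intertwines linear equivalence. Verifying the degree and coordinate-sum computations, and the well-definedness and bijectivity of $\bar\phi$, is where I would be most attentive.
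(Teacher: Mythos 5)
Your proof is correct and is exactly the argument the paper intends: the theorem is stated as a restatement of Theorem \ref{thm:break_eredeti_represent}, transported through the affine bijection $x\mapsto d_G-\mathbf 1-x$ of Proposition \ref{prop:hypertree_break_div_relationship}, and your degree and linear-equivalence checks fill in the bookkeeping the paper leaves implicit.
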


In this paper we prove that an analogue of this theorem holds for planar trinities.

\begin{thm}\label{thm:planar_hypertrees_represent}
For a planar trinity, the set of hypertrees of $G_R$ on $V$ gives a system of representatives of linear equivalence classes of $\Pic^{|E|-1}(D_V)$. In other words, for any chip configuration $x_V$ on $V$ with $\deg(x_V)=|E|-1$, there is exactly one hypertree $f\in B_V(G_R)$ such that $f\sim x_V$ (where linear equivalence is meant for the graph $D_V$).
\end{thm}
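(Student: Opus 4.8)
The plan is to show that the assignment $f\mapsto[f]$, sending a hypertree to its linear equivalence class in $D_V$, is a bijection from $B_V(G_R)$ onto $\Pic^{|E|-1}(D_V)$. First I would check the map lands in the right degree: if $T$ is a spanning tree of $G_R$ realising $f$, then $\sum_{v\in V}f(v)=\sum_{v\in V}(d_T(v)-1)$, and since $T$ has $|V|+|E|-1$ edges and every edge of the bipartite graph $G_R$ meets $V$, we get $\sum_{v\in V}d_T(v)=|V|+|E|-1$ and hence $\deg f=|E|-1$ for every hypertree. It then suffices to prove that $f\mapsto[f]$ is a bijection onto the finite torsor $\Pic^{|E|-1}(D_V)$; for this I would combine a cardinality count with a surjectivity (equivalently, injectivity) argument, since a map between finite sets of equal size is bijective as soon as it is onto.

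For the count I would use the Jaeger-tree correspondence of Subsection~\ref{ss:Jaeger} together with the root-polytope dissection of \cite{Hyper_Bernardi}: fixing a reference, this machinery can be used to match the hypertrees in $B_V(G_R)$ with the arborescences of $D_V$ rooted at a fixed vertex, so that $|B_V(G_R)|$ equals that number of arborescences, which by Fact~\ref{fact:order_of_sandpile_group} is $|\Pic^0(D_V)|=|\Pic^{|E|-1}(D_V)|$. In the special case of a plane graph this is just the familiar equality between the number of spanning trees of $G$, the number of break divisors, and the order of $\Pic^0(G)$, so the statement then coincides with Theorem~\ref{thm:break_represent}; but in general it must be obtained intrinsically, since neither theorem contains the other.

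Given the count, it is enough to show that every class of degree $|E|-1$ contains a hypertree. Here I would use the description of $B_V(G_R)$ as the lattice points of the hypertree polytope, i.e.\ the vectors $f\colon V\to\mathbb Z_{\ge0}$ with $\deg f=|E|-1$ obeying the draconian inequalities $\sum_{v\in S}f(v)\le|N_{G_R}(S)|-1$ for every $\varnothing\neq S\subsetneq V$ (Postnikov~\cite{alex}). Starting from any $x\in\Div^{|E|-1}(D_V)$ I would run a reduction: whenever some inequality is violated, fire the offending set $S$, replacing $x$ by $x+L_{D_V}\mathbf 1_S$, and argue via a monovariant that the process halts at a hypertree in $[x]$. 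The computation that makes set-firing interact correctly with the inequalities is that, $D_V$ being Eulerian, every cut satisfies $d(S,\overline S)=d(\overline S,S)=:c_S$, so firing $S$ changes $\sum_{v\in S}f(v)$ by exactly $-c_S$; more generally, writing $f_1-f_2=L_{D_V}z$ and taking $S$ to be the maximal level set of $z$, one obtains $\sum_{v\in S}(f_1-f_2)(v)\le -c_S<0$, and this strict drop is the engine behind both termination and uniqueness.

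I expect the main obstacle to be precisely the claim that the hypertree polytope is a fundamental domain for the lattice $L_{D_V}\mathbb Z^V$ inside the affine lattice of degree-$(|E|-1)$ configurations — equivalently, that the reduction always terminates inside the draconian region rather than overshooting it. This is the balanced-planar analogue of the zonotopal tiling used in \cite{ABKS} for break divisors, and it is where the geometry of the trinity enters: the edges of $D_V$ split, emerald node by emerald node, into directed cycles on the violet neighbourhoods, so the cut quantities $c_S$ of $D_V$ are governed by exactly the bipartite neighbourhoods $N_{G_R}(S)$ appearing in the inequalities. Matching these two quantities tightly enough to conclude that the set-firing moves tile the hypertree polytope is the crux; once that tiling (equivalently, the surjectivity of the reduction) is in hand, the theorem follows from the cardinality count established above.
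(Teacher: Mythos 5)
Your overall architecture---a cardinality count showing $|B_V(G_R)|=|\Pic^{|E|-1}(D_V)|$ plus a one-sided (injective or surjective) argument---is exactly the paper's, and the count goes through essentially as you sketch it; the paper routes it through $V$-cut Jaeger trees of $G_R$, their dual arborescences in $D_R$, and the isomorphism $\Pic^0(D_V)\cong\Pic^0(D_R)$, rather than matching hypertrees with arborescences of $D_V$ directly, but that is a cosmetic difference. The genuine gap is the step you yourself label ``the crux'': relating the cut quantities of $D_V$ to the bipartite neighbourhoods $\Gamma_{G_R}(S)$. What is actually needed, and what the paper proves using the trinity structure, is that $d_{D_V}(V-S,S)$ is at least the number of emerald nodes having $G_R$-neighbours in both $S$ and $V-S$: for each such emerald node, scanning its violet neighbours in positive cyclic order produces a transition from a $(V-S)$-neighbour to an $S$-neighbour, and the violet edge separating the two corresponding triangles is dual to an edge of $D_V$ from $V-S$ to $S$; distinct emerald nodes contribute distinct violet edges. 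Combined with your inequality $\sum_{v\in S}(f_1-f_2)(v)\le -c_S$ and the two draconian bounds $f_2(S)\le|\Gamma_{G_R}(S)|-1$ and $f_1(S)\ge|E|-|\Gamma_{G_R}(V-S)|$, this lemma immediately yields a contradiction if two distinct hypertrees were linearly equivalent. Without it, neither your uniqueness remark nor your termination claim is established, and this is precisely where planarity enters.

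I would also steer you away from the route you make primary, namely surjectivity via a terminating set-firing reduction. Proving that such a reduction always lands inside the draconian region is exactly the ``effective procedure'' that the remark at the end of Section \ref{sec:representation} states is \emph{not} known to work for balanced plane digraphs (it is known for undirected graphs via \cite{GSTsemibreak}); the authors pose finding such an algorithm as an open problem, so building your proof on it is building on quicksand. The paper sidesteps this entirely by proving injectivity: assuming $f\sim f'$ are both hypertrees with $f'=f+L_{D_V}z$, $z\ge 0$ with a zero coordinate, and taking $S=\{v:z(v)=0\}$, the inequality above gives $f'(S)\ge|\Gamma_{G_R}(S)|$, contradicting the hypertree characterization. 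Since you already observed that injectivity suffices once the count is in place, you should restructure the argument around that; the geometric lemma of the previous paragraph is then the only missing ingredient.
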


We prove Theorem \ref{thm:planar_hypertrees_represent} in Section \ref{sec:representation}. 
The statements of Theorems \ref{thm:break_represent} and \ref{thm:planar_hypertrees_represent} are equivalent for planar graphs, but none of them generalizes the other. So far no common generalization is known for the two theorems.

\begin{open}
	Is there a common generalization of Theorems \ref{thm:break_represent} and \ref{thm:planar_hypertrees_represent}?
\end{open}

\subsection{Realizing hypertrees: Jaeger trees and the Bernardi process}\label{ss:Jaeger}

In general, a hypertree can be realized by many different spanning trees of the bipartite graph, and generally we do not care about the representatives. It is sometimes useful, however, to have a nice set of representing spanning trees for the hypertrees. As explained in \cite{Hyper_Bernardi}, the so-called Jaeger trees give us such a nice set of representatives. 

Let us give the definition of Jaeger trees. Let $H$ be a bipartite graph with vertex classes $V$ and $E$. We will once again call the elements of $V$ violet nodes and the elements of $E$ emerald nodes. We will use the notion of the tour of a spanning tree (see Subsection \ref{ss:sandpile_and_Bernardi}), for which we need a fixed ribbon structure of $H$, a fixed node $b_0\in V\cup E$ and a fixed edge $b_0b_1$ incident to $b_0$. 
 
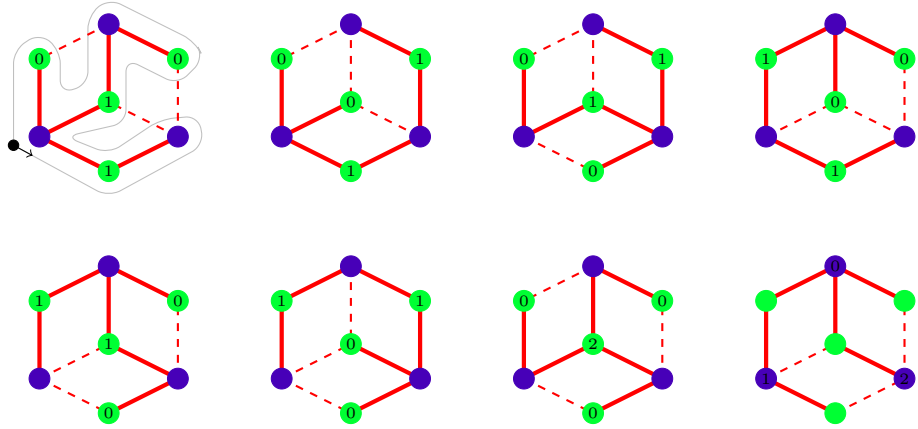
\begin{figure}[h]
	\begin{tikzpicture}[scale=.23]
	
	\begin{scope}[shift={(-14,0)}]
	\draw[-, lightgray, rounded corners=5pt] (12.5,1.5) -- (12.5, 3) -- (12.5, 7) -- (14, 8) -- (15.2, 7) -- (15.2, 4.5) -- (16.8, 5) -- (16.8, 9) -- (18,10) -- (23, 7.5) -- (23.5, 6.5) -- (22, 5) -- (19, 6.5) -- (19, 2.8) -- (15.5, 2) -- (18, 1) -- (21, 3) -- (23.2, 3.2) -- (23.5, 1.5) -- (18, -1.5) -- (12.5, 1.5);
	\draw [dashed, thick,red] (18, 8.5) -- (14,6.5);
	\draw [ultra thick,red] (14, 6.5) -- (14,2);
	\draw [ultra thick,red] (18, 8.5) -- (22,6.5);
	\draw [ultra thick,red] (18, 8.5) -- (18,4);
	\draw [ultra thick,red] (14, 2) -- (18,4);
	\draw [ultra thick,red] (14, 2) -- (18,0);
	\draw [ultra thick,red] (18, 0) -- (22,2);
	\draw [dashed, thick,red] (22, 2) -- (18,4);
	\draw [dashed, thick,red] (22, 2) -- (22,6.5);
	\draw [fill=e,e] (18, 0) circle [radius=0.6];
	\draw [fill=e,e] (18, 4) circle [radius=0.6];
	\draw [fill=e,e] (14, 6.5) circle [radius=0.6];
	\draw [fill=e,e] (22, 6.5) circle [radius=0.6];
	\draw [fill=v,v] (14, 2) circle [radius=0.6];
	\draw [fill=v,v] (22, 2) circle [radius=0.6];
	\draw [fill=v,v] (18, 8.5) circle [radius=0.6];
	
	\draw [fill] (12.5,1.5) circle [radius=.3];
	\draw [->] (12.5,1.5) -- (13.6, 0.9);
	
	\node at (14, 6.5) {{\tiny{$0$}}};
	\node at (22, 6.5) {{\tiny{$0$}}};
	\node at (18, 4) {{\tiny{$1$}}};
	\node at (18, 0) {{\tiny{$1$}}};
	\end{scope}
	
	\begin{scope}[shift={(0,0)}]
	\draw [dashed, thick,red] (18, 8.5) -- (14,6.5);
	\draw [ultra thick,red] (14, 6.5) -- (14,2);
	\draw [ultra thick,red] (18, 8.5) -- (22,6.5);
	\draw [dashed, thick,red] (18, 8.5) -- (18,4);
	\draw [ultra thick,red] (14, 2) -- (18,4);
	\draw [ultra thick,red] (14, 2) -- (18,0);
	\draw [ultra thick,red] (18, 0) -- (22,2);
	\draw [dashed, thick,red] (22, 2) -- (18,4);
	\draw [ultra thick,red] (22, 2) -- (22,6.5);
	\draw [fill=e,e] (18, 0) circle [radius=0.6];
	\draw [fill=e,e] (18, 4) circle [radius=0.6];
	\draw [fill=e,e] (14, 6.5) circle [radius=0.6];
	\draw [fill=e,e] (22, 6.5) circle [radius=0.6];
	\draw [fill=v,v] (14, 2) circle [radius=0.6];
	\draw [fill=v,v] (22, 2) circle [radius=0.6];
	\draw [fill=v,v] (18, 8.5) circle [radius=0.6];
	
	\node at (14, 6.5) {{\tiny{$0$}}};
	\node at (22, 6.5) {{\tiny{$1$}}};
	\node at (18, 4) {{\tiny{$0$}}};
	\node at (18, 0) {{\tiny{$1$}}};
	\end{scope}
	
	\begin{scope}[shift={(14, 0)}]
	\draw [dashed, thick,red] (18, 8.5) -- (14,6.5);
	\draw [ultra thick,red] (14, 6.5) -- (14,2);
	\draw [ultra thick,red] (18, 8.5) -- (22,6.5);
	\draw [dashed, thick,red] (18, 8.5) -- (18,4);
	\draw [ultra thick,red] (14, 2) -- (18,4);
	\draw [dashed, thick,red] (14, 2) -- (18,0);
	\draw [ultra thick,red] (18, 0) -- (22,2);
	\draw [ultra thick,red] (22, 2) -- (18,4);
	\draw [ultra thick,red] (22, 2) -- (22,6.5);
	\draw [fill=e,e] (18, 0) circle [radius=0.6];
	\draw [fill=e,e] (18, 4) circle [radius=0.6];
	\draw [fill=e,e] (14, 6.5) circle [radius=0.6];
	\draw [fill=e,e] (22, 6.5) circle [radius=0.6];
	\draw [fill=v,v] (14, 2) circle [radius=0.6];
	\draw [fill=v,v] (22, 2) circle [radius=0.6];
	\draw [fill=v,v] (18, 8.5) circle [radius=0.6];
	
	\node at (14, 6.5) {{\tiny{$0$}}};
	\node at (22, 6.5) {{\tiny{$1$}}};
	\node at (18, 4) {{\tiny{$1$}}};
	\node at (18, 0) {{\tiny{$0$}}};
	\end{scope}
	
	\begin{scope}[shift={(28, 0)}]
	\draw [ultra thick,red] (18, 8.5) -- (14,6.5);
	\draw [ultra thick,red] (14, 6.5) -- (14,2);
	\draw [ultra thick,red] (18, 8.5) -- (22,6.5);
	\draw [ultra thick,red] (18, 8.5) -- (18,4);
	\draw [dashed, thick,red] (14, 2) -- (18,4);
	\draw [ultra thick,red] (14, 2) -- (18,0);
	\draw [ultra thick,red] (18, 0) -- (22,2);
	\draw [dashed, thick,red] (22, 2) -- (18,4);
	\draw [dashed, thick,red] (22, 2) -- (22,6.5);
	\draw [fill=e,e] (18, 0) circle [radius=0.6];
	\draw [fill=e,e] (18, 4) circle [radius=0.6];
	\draw [fill=e,e] (14, 6.5) circle [radius=0.6];
	\draw [fill=e,e] (22, 6.5) circle [radius=0.6];
	\draw [fill=v,v] (14, 2) circle [radius=0.6];
	\draw [fill=v,v] (22, 2) circle [radius=0.6];
	\draw [fill=v,v] (18, 8.5) circle [radius=0.6];
	
	\node at (14, 6.5) {{\tiny{$1$}}};
	\node at (22, 6.5) {{\tiny{$0$}}};
	\node at (18, 4) {{\tiny{$0$}}};
	\node at (18, 0) {{\tiny{$1$}}};
	\end{scope}
	
	\begin{scope}[shift={(-14, -14)}]
	\draw [ultra thick,red] (18, 8.5) -- (14,6.5);
	\draw [ultra thick,red] (14, 6.5) -- (14,2);
	\draw [ultra thick,red] (18, 8.5) -- (22,6.5);
	\draw [ultra thick,red] (18, 8.5) -- (18,4);
	\draw [dashed, thick,red] (14, 2) -- (18,4);
	\draw [dashed, thick,red] (14, 2) -- (18,0);
	\draw [ultra thick,red] (18, 0) -- (22,2);
	\draw [ultra thick,red] (22, 2) -- (18,4);
	\draw [dashed, thick,red] (22, 2) -- (22,6.5);
	\draw [fill=e,e] (18, 0) circle [radius=0.6];
	\draw [fill=e,e] (18, 4) circle [radius=0.6];
	\draw [fill=e,e] (14, 6.5) circle [radius=0.6];
	\draw [fill=e,e] (22, 6.5) circle [radius=0.6];
	\draw [fill=v,v] (14, 2) circle [radius=0.6];
	\draw [fill=v,v] (22, 2) circle [radius=0.6];
	\draw [fill=v,v] (18, 8.5) circle [radius=0.6];
	
	\node at (14, 6.5) {{\tiny{$1$}}};
	\node at (22, 6.5) {{\tiny{$0$}}};
	\node at (18, 4) {{\tiny{$1$}}};
	\node at (18, 0) {{\tiny{$0$}}};
	\end{scope}
	
	\begin{scope}[shift={(0, -14)}]
	\draw [ultra thick,red] (18, 8.5) -- (14,6.5);
	\draw [ultra thick,red] (14, 6.5) -- (14,2);
	\draw [ultra thick,red] (18, 8.5) -- (22, 6.5);
	\draw [dashed, thick,red] (18, 8.5) -- (18, 4);
	\draw [dashed, thick,red] (14, 2) -- (18, 4);
	\draw [dashed, thick,red] (14, 2) -- (18, 0);
	\draw [ultra thick,red] (18, 0) -- (22,2);
	\draw [ultra thick,red] (22, 2) -- (18,4);
	\draw [ultra thick,red] (22, 2) -- (22,6.5);
	\draw [fill=e,e] (18, 0) circle [radius=0.6];
	\draw [fill=e,e] (18, 4) circle [radius=0.6];
	\draw [fill=e,e] (14, 6.5) circle [radius=0.6];
	\draw [fill=e,e] (22, 6.5) circle [radius=0.6];
	\draw [fill=v,v] (14, 2) circle [radius=0.6];
	\draw [fill=v,v] (22, 2) circle [radius=0.6];
	\draw [fill=v,v] (18, 8.5) circle [radius=0.6];
	
	\node at (14, 6.5) {{\tiny{$1$}}};
	\node at (22, 6.5) {{\tiny{$1$}}};
	\node at (18, 4) {{\tiny{$0$}}};
	\node at (18, 0) {{\tiny{$0$}}};
	\end{scope}
	
	\begin{scope}[shift={(14, -14)}]
	\draw [dashed, thick,red] (18, 8.5) -- (14,6.5);
	\draw [ultra thick,red] (14, 6.5) -- (14,2);
	\draw [ultra thick,red] (18, 8.5) -- (22,6.5);
	\draw [ultra thick,red] (18, 8.5) -- (18,4);
	\draw [ultra thick,red] (14, 2) -- (18,4);
	\draw [dashed, thick,red] (14, 2) -- (18,0);
	\draw [ultra thick,red] (18, 0) -- (22,2);
	\draw [ultra thick,red] (22, 2) -- (18,4);
	\draw [dashed, thick,red] (22, 2) -- (22,6.5);
	\draw [fill=e,e] (18, 0) circle [radius=0.6];
	\draw [fill=e,e] (18, 4) circle [radius=0.6];
	\draw [fill=e,e] (14, 6.5) circle [radius=0.6];
	\draw [fill=e,e] (22, 6.5) circle [radius=0.6];
	\draw [fill=v,v] (14, 2) circle [radius=0.6];
	\draw [fill=v,v] (22, 2) circle [radius=0.6];
	\draw [fill=v,v] (18, 8.5) circle [radius=0.6];
	
	\node at (14, 6.5) {{\tiny{$0$}}};
	\node at (22, 6.5) {{\tiny{$0$}}};
	\node at (18, 4) {{\tiny{$2$}}};
	\node at (18, 0) {{\tiny{$0$}}};
	\end{scope}
	
	\begin{scope}[shift={(28, -14)}]
	\draw [ultra thick,red] (18, 8.5) -- (14,6.5);
	\draw [ultra thick,red] (14, 6.5) -- (14,2);
	\draw [ultra thick,red] (18, 8.5) -- (22,6.5);
	\draw [ultra thick,red] (18, 8.5) -- (18,4);
	\draw [dashed, thick,red] (14, 2) -- (18,4);
	\draw [ultra thick,red] (14, 2) -- (18,0);
	\draw [dashed, thick,red] (18, 0) -- (22,2);
	\draw [ultra thick,red] (22, 2) -- (18,4);
	\draw [dashed, thick,red] (22, 2) -- (22,6.5);
	\draw [fill=e,e] (18, 0) circle [radius=0.6];
	\draw [fill=e,e] (18, 4) circle [radius=0.6];
	\draw [fill=e,e] (14, 6.5) circle [radius=0.6];
	\draw [fill=e,e] (22, 6.5) circle [radius=0.6];
	\draw [fill=v,v] (14, 2) circle [radius=0.6];
	\draw [fill=v,v] (22, 2) circle [radius=0.6];
	\draw [fill=v,v] (18, 8.5) circle [radius=0.6];
	
	\node at (14,2) {{\tiny{$1$}}};
	\node at (22,2) {{\tiny{$2$}}};
	\node at (18,8.5) {{\tiny{$0$}}};
	\end{scope}
	\end{tikzpicture}
	\caption{Jaeger trees and the Bernardi process. The ribbon structure is the positive orientation of the plane, while the base node is the lower left blue node, with base edge going to the right and downwards. The first seven panels show the seven $V$-cut Jaeger trees of this bipartite graph, with the realized hypertrees on $E$. The gray line on the first panel indicates how the Bernardi process proceeds. The last panel shows a spanning tree which is not a Jaeger tree.}
	\label{fig:Jaeger trees}
\end{figure}

\begin{defn}[V-cut and $E$-cut Jaeger trees, \cite{Hyper_Bernardi}]
A spanning tree $T$ of $H$ is called a $V$-cut (resp.\ $E$-cut) Jaeger tree, if in the tour of $T$, each edge $\varepsilon\notin T$ is cut through at its violet (resp. emerald) endpoint.
\end{defn}

See Figure \ref{fig:Jaeger trees} for examples. 

\begin{thm}\cite[Corollary 6.16]{Hyper_Bernardi}\label{thm:Jaeger_trees_represent_hypertrees}
Let $H$ be a bipartite ribbon graph with a fixed edge $b_0b_1$. For each hypertree $f\in B_{E}(H)$, there is exactly one $V$-cut Jaeger tree $T$ such that $f=f_E(T)$, and for each hypertree $f\in B_V(H)$, there is exactly one $V$-cut Jaeger tree $T$ such that $f=f_V(T)$. In particular, the $V$-cut Jaeger trees give a bijection between $B_{E}(H)$ and $B_{V}(H)$.
\end{thm}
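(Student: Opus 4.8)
The plan is to reduce the statement to two separate bijection claims and prove each by a deterministic reconstruction of the tree from a prescribed degree vector. First observe that the two maps $T\mapsto f_E(T)$ and $T\mapsto f_V(T)$ are automatically well defined from the set $\mathcal{J}$ of $V$-cut Jaeger trees into $B_E(H)$ and $B_V(H)$, respectively: any spanning tree $T$ realizes the hypertree $f_U(T)$ on either class by Definition \ref{def:hypertree}. The ``in particular'' clause is then purely formal, since once both $f_E\colon\mathcal{J}\to B_E(H)$ and $f_V\colon\mathcal{J}\to B_V(H)$ are known to be bijections, $f_V\circ f_E^{-1}$ is the desired bijection $B_E(H)\to B_V(H)$. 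So the entire content is that each hypertree on $E$ (resp.\ on $V$) is induced by exactly one $V$-cut Jaeger tree.

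To prove that $f_E$ is a bijection, I would set up a deterministic version of the tour. Fix a target $f\in B_E(H)$ and run the Bernardi process from $b_0$, $b_0b_1$, building a candidate edge set on the fly; at each step the current node $x$ and current edge $xy$ are determined by the decisions already made. The rule declares $xy$ a tree edge or a cut edge in the \emph{only} way consistent with the $V$-cut requirement — every non-tree edge is first met at its violet end — together with the demand that each emerald node $\varepsilon$ end up with degree $f(\varepsilon)+1$. The key point is that this decision depends only on $f$ and on the position in the tour, not on the (unknown) rest of $T$. Granting this, injectivity is immediate: any two $V$-cut Jaeger trees with the same $f_E$ produce identical tours and hence coincide. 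Surjectivity is the statement that the procedure succeeds and outputs a $V$-cut Jaeger tree realizing $f$. The argument for $f_V$ is parallel, now distributing the tree-edge budget so as to realize a prescribed $V$-degree vector.

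The main obstacle is proving that the greedy procedure never fails: that it visits every vertex, never closes a cycle, and halts precisely when $(b_0,b_0b_1)$ would recur, so that the output is a genuine spanning tree. This is exactly where the hypothesis $f\in B_E(H)$, i.e.\ realizability by \emph{some} spanning tree, must be upgraded to realizability by the \emph{canonical} $V$-cut tree, and where one must verify that the forced choice is truly local. The cleanest combinatorial route is an induction that tracks how the cyclic ribbon order around each node is consumed during the two passes over each edge guaranteed by Lemma \ref{l:T-tour_cyclic_perm}, showing that the $V$-cut rule hands out exactly $f(\varepsilon)+1$ tree edges at each emerald $\varepsilon$ while keeping the partial tree connected and acyclic. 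A more conceptual alternative, which I expect to be the slicker path, is to realize each $V$-cut Jaeger tree as a full-dimensional simplex in a dissection of the root polytope of $H$, and to observe that the coordinate projections onto $\mathbb{R}^E$ and $\mathbb{R}^V$ carry these simplices bijectively onto the lattice points of the two hypertree polytopes $B_E(H)$ and $B_V(H)$; establishing that the Jaeger trees do dissect the root polytope, and that these projections are lattice-point bijections, is then the crux and yields both bijections at once.
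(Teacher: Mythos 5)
First, a point of orientation: the paper does not prove Theorem \ref{thm:Jaeger_trees_represent_hypertrees} at all --- it is imported verbatim as \cite[Corollary 6.16]{Hyper_Bernardi}, and the surrounding text only sketches the hypergraphical Bernardi process and remarks that its correctness is ``a nontrivial result'' established in that reference. So there is no in-paper proof to match; what you have written is a plan that correctly identifies the two known routes from \cite{Hyper_Bernardi} (the greedy Bernardi process in its Section 4--6, and the root-polytope dissection in its Section 7), but it does not carry either of them out. The passage you yourself flag as ``the main obstacle'' --- that the greedy procedure never disconnects the graph, never closes a cycle, terminates with a spanning tree realizing $f$, and that its run coincides with the tour of any $V$-cut Jaeger tree inducing $f$ --- is the entire content of the theorem, and it is left as a named gap rather than an argument. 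Note also that your claim that the decision at each step ``depends only on $f$ and on the position in the tour'' understates the issue: the rule in the Bernardi process is to delete an edge met from the violet side if and only if $f$ remains realizable in the \emph{remaining} graph, which is a global condition on the current subgraph; the real work is showing that this global test reproduces exactly the deletions performed by the tour of the (unknown) Jaeger tree, which is what injectivity requires.

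A second, more specific gap: you treat the $f_V$ half as ``parallel'' to the $f_E$ half. It is not. The symmetric counterpart of ``$V$-cut Jaeger trees biject with $B_E(H)$'' would be ``$E$-cut Jaeger trees biject with $B_V(H)$''; the theorem instead asserts that the \emph{same} family of $V$-cut Jaeger trees also represents each hypertree on $V$ exactly once, and it is precisely this asymmetric double statement that yields the bijection $B_E(H)\to B_V(H)$ in the final clause. This needs its own argument (in \cite{Hyper_Bernardi} it comes out of the structure of the dissection of the root polytope, whose maximal simplices are indexed by the Jaeger trees and project bijectively onto the lattice points of both hypertree polytopes). Your closing sentence names this route correctly, but establishing that the Jaeger trees dissect the root polytope and that both degree-vector maps are injective on them is again the substance of the cited result, not a corollary of the setup. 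As it stands the proposal is a faithful outline of the strategy of \cite{Hyper_Bernardi} with the two decisive steps asserted rather than proved.
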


We will denote this bijection by $\beta_{b_0,b_1}:B_E(H) \to B_V(H)$. The notation intentionally agrees with the notation for the Bernardi bijection; let us show that this bijection generalizes the Bernardi bijection between spannning trees and break divisors.
In \cite{Hyper_Bernardi}, a greedy algorithm (called the hypergraphical Bernardi process) is given for finding the unique representing $V$-cut Jaeger tree for a hypertree $f$ on $E$. The process 
starts from $b_0$ and traverses or removes each edge of $H$. The first edge to be examined is $b_0b_1$. If we arrive at an edge from the emerald direction, the edge needs to be traversed, and we examine the next edge according to the ribbon structure at the new current vertex. If we arrive at an edge from the violet direction, we remove it if the hypertree $f$ can be realized in the graph after the removal of the edge. In this case we continue with the smaller graph, and we take the next edge (according to the ribbon structure) at the current vertex. If the edge cannot be removed, we traverse it and continue with the next edge at the new current vertex.
The process ends when we would examine an edge for the second time from the same direction. See also \cite[Definition 4.1]{Hyper_Bernardi}. In \cite{Hyper_Bernardi} it is proved that at the end of the process, the graph of remaining edges is the unique $V$-cut Jaeger tree $T$ representing $f$ and the Bernardi process traverses the graph the same way as the tour of $T$. (For hyergraphs, this is a nontrivial result.)

One can easily check that the bijection of Baker and Wang corresponds to the hypergraphical Bernardi process on $B=\Bip G$, where they drop a chip on the violet end of each removed edge (see also \cite[Remark 4.4]{Hyper_Bernardi}). Thus, the break divisor they obtain is exactly the dual pair of the hypertree obtained from the Jaeger tree.

Jaeger trees have many more nice properties, for example they correspond to a shellable dissection of the root-polytope of the bipartite graph (see \cite[Section 7]{Hyper_Bernardi}). Moreover, for planar ribbon structures, this dissection is a triangulation.

\section{An embedding of the Sandpile group in $\mathcal{A}_W$}
\label{sec:embedding_the_sandpile_group}

\begin{thm}\label{thm:sandpile_group_in_A_W}
	The equivalence classes of $\mathcal{A}_W$ containing at least one element of the form $(x_V,\mathbf{0},\mathbf{0})$ form a group $G$ isomorphic to $\Pic(D_V)\cong \Pic^0(D_V)\times \mathbb{Z}$.
\end{thm}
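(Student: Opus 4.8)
The plan is to realize the claimed group $G$ as the image of $\mathbb{Z}^V$ under the homomorphism $\phi\colon\mathbb{Z}^V\to\mathcal{A}_W$ defined by $\phi(x_V)=[(x_V,\mathbf{0},\mathbf{0})]$, where $[\,\cdot\,]$ denotes the white triangle equivalence class, and then to pin down its kernel. Since $\{(x_V,\mathbf{0},\mathbf{0}):x_V\in\mathbb{Z}^V\}$ is a subgroup of $\mathcal{A}$ and $\phi$ is its composition with the quotient map $\mathcal{A}\to\mathcal{A}_W$, a class lies in $G$ exactly when it lies in $\operatorname{im}\phi$; hence $G=\operatorname{im}\phi$ is a subgroup and the first isomorphism theorem gives $G\cong\mathbb{Z}^V/\ker\phi$. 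Because $\Pic(D_V)=\mathbb{Z}^V/L_{D_V}\mathbb{Z}^V$ and $\Pic(D_V)\cong\Pic^0(D_V)\times\mathbb{Z}$ (the remark preceding Fact \ref{fact:order_of_sandpile_group}), the whole statement reduces to proving $\ker\phi=L_{D_V}\mathbb{Z}^V$.

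The computational heart is a single identity read off from the star of a violet point. First I would record the bijection between white triangles and edges of $D_V$: each white triangle $W$ has a unique violet edge, opposite its violet corner $v(W)$; this edge is shared with a unique black triangle, and the resulting directed edge of $D_V$ has head $v(W)$. Around a violet point $v$ the $2k$ incident triangles alternate in colour, and the link of $v$ is a cycle of violet edges; the white triangles incident to $v$ are the in-edges of $v$, while the white triangles lying across the violet edges of the black triangles at $v$ have violet corners ranging exactly over the out-neighbours of $v$ (with multiplicity). Writing $\mathbf{1}_W=(\mathbf{1}_{v(W)},\mathbf{1}_{e(W)},\mathbf{1}_{r(W)})$ for the characteristic vector of $W$ and letting $W_B$ denote the white triangle sharing the violet edge of a black triangle $B$, I expect
\[
\sum_{B\text{ black at }v}\mathbf{1}_{W_B}-\sum_{W\text{ white at }v}\mathbf{1}_{W}=(L_{D_V}\mathbf{1}_v,\mathbf{0},\mathbf{0}):
\]
each emerald coordinate $e_i$ and each red coordinate $r_i$ around $v$ occurs once with each sign and so telescopes to $\mathbf{0}$, whereas the violet coordinate is $\sum_u d(v,u)\mathbf{1}_u-d^+(v)\mathbf{1}_v$. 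This establishes $L_{D_V}\mathbb{Z}^V\subseteq\ker\phi$.

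For the reverse inclusion I would build a homomorphism $\psi\colon\mathcal{A}\to\Pic(D_V)$ killing every white triangle with $\psi(\mathbf{1}_v,\mathbf{0},\mathbf{0})=[\mathbf{1}_v]$; applying $\psi$ to a relation $(x_V,\mathbf{0},\mathbf{0})\wapr\mathbf{0}$ then forces $[x_V]=0$, i.e.\ $x_V\in L_{D_V}\mathbb{Z}^V$. With $\psi(\mathbf{1}_v,\mathbf{0},\mathbf{0})$ fixed, killing white triangles amounts to solving $\psi_E(e(W))+\psi_R(r(W))=-[\mathbf{1}_{v(W)}]$ over the edges of the connected bipartite graph $G_V$ on $R\cup E$. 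Such a potential exists iff the constraints are consistent around a generating set of cycles; on the sphere the face boundaries of $G_V$ span its cycle space, and each face of $G_V$ is precisely the link of a violet point $v$. The consistency condition around that face is an alternating sum that, by the star identity above, equals $[L_{D_V}\mathbf{1}_v]=0$ in $\Pic(D_V)$, so $\psi$ exists and the inclusion $\ker\phi\subseteq L_{D_V}\mathbb{Z}^V$ follows.

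The main obstacle I anticipate is the bookkeeping in the star identity: correctly matching the colour-alternation of the triangles around $v$ with the in- and out-edges of $D_V$, and verifying that the emerald and red coordinates cancel. The only other point needing care is the (standard) fact that the face cycles of the spherically embedded graph $G_V$ span its cycle space, which is what lets face-wise consistency guarantee the global existence of $\psi$. Everything else—the subgroup property, the first isomorphism theorem, and the splitting $\Pic(D_V)\cong\Pic^0(D_V)\times\mathbb{Z}$—is formal.
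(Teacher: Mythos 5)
Your proposal is correct, and while its first half coincides with the paper's argument, the second half takes a genuinely different route. The star identity $\sum_{B\text{ black at }v}\mathbf{1}_{W_B}-\sum_{W\text{ white at }v}\mathbf{1}_{W}=(L_{D_V}\mathbf{1}_v,\mathbf{0},\mathbf{0})$ is exactly the linear combination constructed in the paper's Lemma \ref{lem:lin_ekv=>haromszog_ekv} (there it is built in two stages: first black triangles at $v$ with coefficient $-1$ and the white triangles across their violet edges with $+1$, then each black triangle at $v$ is traded for the white triangle at $v$), so the inclusion $L_{D_V}\mathbb{Z}^V\subseteq\ker\phi$ is the same computation in a cleaner package. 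For the reverse inclusion the paper (Lemma \ref{lem:haromszog_ekv=>lin_ekv}) argues by induction on $\sum|a_i|$: from a nontrivial white-triangle combination it extracts a cyclic sequence of triangles with alternating signs meeting alternately at emerald and red nodes, observes that the corresponding cycle of $G_V$ separates the sphere, and fires the violet vertices on one side to strictly decrease $\sum|a_i|$. You instead construct a retraction $\psi\colon\mathcal{A}\to\Pic(D_V)$ killing every white triangle, which reduces to a potential (tension) problem on $G_V$; solvability follows because the face boundaries of $G_V$ --- which are precisely the links of the violet points --- generate its cycle space over $\mathbb{Z}$ on the sphere, and the consistency condition around the face of $v$ is exactly the violet coordinate of the star identity, namely $[L_{D_V}\mathbf{1}_v]=0$ in $\Pic(D_V)$. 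Both arguments use planarity in closely related essential ways (Jordan separation versus $H_1(S^2)=0$), but yours is more structural: it produces an explicit inverse to the induced map $\mathbb{Z}^V/L_{D_V}\mathbb{Z}^V\to G$ and isolates the one local identity on which everything rests, whereas the paper's descent is more hands-on. The price is that you must verify the supporting facts --- that white triangles biject with the violet edges of $G_V$, that the regions of $G_V$ are the open stars of violet points, and that face cycles of a connected spherical graph generate its cycle space over $\mathbb{Z}$ --- all of which are true and routine for trinities.
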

\begin{proof}
	$G$ is is clearly a subgroup of $\mathcal{A}_W$.
	We can think of $G$ as a free Abelian group on the set $V$ factorized by some relation induced by $\approx_W$. Hence for proving that $G\cong \Pic^0(D_V)\times \mathbb{Z}$, it suffices to show that the equivalence relation induced by $\approx_W$ on the elements of type $(x_V,\mathbf{0},\mathbf{0})$ coincides with linear equivalence of chip configurations on $D_V$. We show this in the following two lemmas.
    
    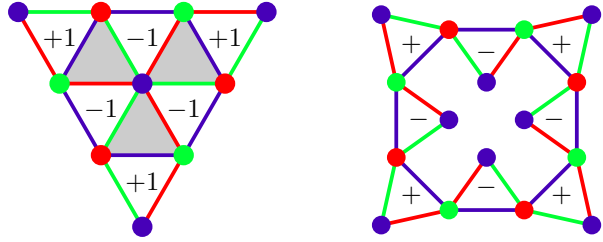
\begin{figure}
    \begin{center}
    \begin{tikzpicture}[-,>=stealth',auto,scale=1.1,thick]
    \draw[fill=light-gray] (0,0) -- (-0.5,0.866) -- (-1,0) -- cycle;
    \draw[fill=light-gray] (0,0) -- (0.5,0.866) -- (1,0) -- cycle;
    \draw[fill=light-gray] (0,0) -- (-0.5,-0.866) -- (0.5,-0.866) -- cycle;
	\path[every node/.style={font=\sffamily\small},color=r,line width=0.5mm]
	(0, 0) edge node {} (0.5, 0.866)
	(0, 0) edge node {} (0.5, -0.866)
	(0, 0) edge node {} (-1, 0)
	(-1.5, 0.866) edge node {} (-1, 0)
	(1.5, 0.866) edge node {} (0.5, 0.866)
	(0, -1.7321) edge node {} (0.5, -0.866);
	\path[every node/.style={font=\sffamily\small},color=v,line width=0.5mm]
	(-0.5, -0.866) edge node {} (0.5, -0.866)
	(-0.5, -0.866) edge node {} (-1, 0)
	(-0.5, 0.866) edge node {} (0.5, 0.866)
	(-0.5, 0.866) edge node {} (-1, 0)
	(1, 0) edge node {} (0.5, 0.866)
	(1, 0) edge node {} (0.5, -0.866);
	\path[every node/.style={font=\sffamily\small},color=e,line width=0.5mm]
	(0, 0) edge node {} (1, 0)
	(0, 0) edge node {} (-0.5, -0.866)
	(0, 0) edge node {} (-0.5, 0.866)
	(-1.5, 0.866) edge node {} (-0.5, 0.866)
	(1.5, 0.866) edge node {} (1, 0)
	(0, -1.7321) edge node {} (-0.5, -0.866);
    \tikzstyle{u}=[circle]
	\draw [fill=v,v] (0, 0) circle [radius=0.11];
    \draw [fill=v,v] (0, -1.7321) circle [radius=0.11];
    \draw [fill=v,v] (1.5, 0.866) circle [radius=0.11];
    \draw [fill=v,v] (-1.5, 0.866) circle [radius=0.11];
    \draw [fill=e,e] (0.5, 0.866) circle [radius=0.11];
    \draw [fill=e,e] (0.5, -0.866) circle [radius=0.11];
    \draw [fill=e,e] (-1, 0) circle [radius=0.11];
    \draw [fill=r,r] (1, 0) circle [radius=0.11];
    \draw [fill=r,r] (-0.5, -0.866) circle [radius=0.11];
    \draw [fill=r,r] (-0.5, 0.866) circle [radius=0.11];
    \node[u] at (-1,0.55) {$+1$};
    \node[u] at (0,0.55) {$-1$};
    \node[u] at (1,0.55) {$+1$};
    \node[u] at (-0.5,-0.31) {$-1$};
    \node[u] at (0.5,-0.31) {$-1$};
    \node[u] at (0,-1.2) {$+1$};
	\end{tikzpicture}
    \hspace{1cm}
    \begin{tikzpicture}[-,>=stealth',auto,scale=1,thick]
    \tikzstyle{u}=[circle]
	\path[every node/.style={font=\sffamily\small},color=r,line width=0.5mm]
	(0, 0.5) edge node {} (0.5, 1.2)
    (-0.5, 0) edge node {} (-1.2, 0.5)
    (1.4, 1.4) edge node {} (0.5, 1.2)
    (-1.4, -1.4) edge node {} (-0.5, -1.2)
    (0, -0.5) edge node {} (-0.5, -1.2)
    (0.5, 0) edge node {} (1.2, -0.5)
    (-1.4, 1.4) edge node {} (-1.2, 0.5)
    (1.4, -1.4) edge node {} (1.2, -0.5);
	\path[every node/.style={font=\sffamily\small},color=v,line width=0.5mm]
    (-0.5, 1.2) edge node {} (0.5, 1.2)
    (-1.2, -0.5) edge node {} (-1.2, 0.5)
    (-0.5, 1.2) edge node {} (-1.2, 0.5)
    (-1.2, -0.5) edge node {} (-0.5, -1.2)
    (-0.5, -1.2) edge node {} (0.5, -1.2)
    (1.2, -0.5) edge node {} (0.5, -1.2)
    (1.2, 0.5) edge node {} (0.5, 1.2)
    (1.2, 0.5) edge node {} (1.2, -0.5);
    \path[every node/.style={font=\sffamily\small},color=e,line width=0.5mm]
	(0, 0.5) edge node {} (-0.5, 1.2)
    (1.4, 1.4) edge node {} (1.2, 0.5)
    (-1.4, 1.4) edge node {} (-0.5, 1.2)
    (-1.2, -0.5) edge node {} (-0.5, 0)
    (-1.4, -1.4) edge node {} (-1.2, -0.5)
    (0, -0.5) edge node {} (0.5, -1.2)
    (0.5, 0) edge node {} (1.2, 0.5)
    (1.4, -1.4) edge node {} (0.5, -1.2);
	\draw [fill=v,v] (0, 0.5) circle [radius=0.11];
	\draw [fill=v,v] (1.4, -1.4) circle [radius=0.11];
	\draw [fill=v,v] (1.4, 1.4) circle [radius=0.11];
	\draw [fill=v,v] (0.5, 0) circle [radius=0.11];
	\draw [fill=v,v] (0, -0.5) circle [radius=0.11];
	\draw [fill=v,v] (-1.4, -1.4) circle [radius=0.11];
	\draw [fill=v,v] (-1.4, 1.4) circle [radius=0.11];
	\draw [fill=v,v] (-0.5, 0) circle [radius=0.11];
	\draw [fill=e,e] (0.5, 1.2) circle [radius=0.11];
	\draw [fill=e,e] (1.2, -0.5) circle [radius=0.11];
	\draw [fill=e,e] (-0.5, -1.2) circle [radius=0.11];
	\draw [fill=e,e] (-1.2, 0.5) circle [radius=0.11];
	\draw [fill=r,r] (1.2, 0.5) circle [radius=0.11];
	\draw [fill=r,r] (-1.2, -0.5) circle [radius=0.11];
	\draw [fill=r,r] (-0.5, 1.2) circle [radius=0.11];
	\draw [fill=r,r] (0.5, -1.2) circle [radius=0.11];
	\node[u]  at (0, 0.9) {$-$};
	\node[u]  at (0, -0.9) {$-$};
	\node[u]  at (0.9, 0) {$-$};
	\node[u]  at (-0.9, 0) {$-$};
	\node[u]  at (-1, 1) {+};
	\node[u]  at (-1, -1) {+};
	\node[u]  at (1, -1) {+};
	\node[u]  at (1, 1) {+};
	\end{tikzpicture}
    \end{center}
    \caption{Illustration for Lemmas \ref{lem:lin_ekv=>haromszog_ekv} and \ref{lem:haromszog_ekv=>lin_ekv}.}\label{fig:lemmas_illustration}
    \end{figure}

	\begin{lemma} \label{lem:lin_ekv=>haromszog_ekv}
		If $x_V\sim y_V$, then $(x_V,\mathbf{0},\mathbf{0})\approx_W (y_V,\mathbf{0},\mathbf{0})$.
	\end{lemma}
	\begin{proof}
		Since $\wapr$ is transitive, it is enough to show that if we get $y_V$ from $x_V$ by performing one firing on $D_V$, then $(x_V,\mathbf{0},\mathbf{0})\approx_W (y_V,\mathbf{0},\mathbf{0})$.
        
        Suppose that $y_V$ is obtained from $x_V$ by firing the node $v$ in $D_V$. We write $(y_V,\mathbf{0},\mathbf{0}) - (x_V,\mathbf{0},\mathbf{0})$ 
        as a a linear combination of white triangles, proving that $(x_V,\mathbf{0},\mathbf{0})\approx_W (y_V,\mathbf{0},\mathbf{0})$.
        
        First let us write $(y_V,\mathbf{0},\mathbf{0}) -(x_V,\mathbf{0},\mathbf{0})$ as a linear combination of black and white triangles. Take the black triangles incident to $v$ with coefficient $-1$, and the white triangles sharing a violet edge with one of the above mentioned black trianges with coefficient 1. If we look at $D_V$, then each black triangle corresponds to the tail of an edge of $D_V$, and each white triange corresponds to the head of an edge of $D_V$. The triangles we took correspond exactly to the out-edges of $v$. We claim that this linear combination gives $(y_V,\mathbf{0},\mathbf{0}) -(x_V,\mathbf{0},\mathbf{0})$.
        On violet nodes we clearly get $y_V-x_V$ since $v$ loses a chip for each out-edge, and we have a black triangle with coefficient $-1$ for each such edge, and a node gains one chip for each edge leading from $v$ to it, but for each such in-edge, we took a white triangle incident to the violet node with coefficient $1$.
        We need to prove that the sum remains zero on red and emerald nodes. To see this, notice that the triangles we took come in pairs of black and white triangles sharing a violet edge, with the black triangle having coefficient $-1$ and the white triangle having coefficient 1. 
        For any emerald node, some of the violet edges incident to it have a black triangle on one side with coefficient $-1$ and a white triangle on the other side with coefficient $1$, and some violet edges have triangles with coefficient zero on both sides. Hence altogether, the sum on any emerald node remains 0. We can say the same for red nodes.
        
        Now to have a linear combination of white triangles only, let us modify the above construction so that those white triangles that had coefficient 1 above should still have coefficient 1, but we take the white triangles incident to $v$ with coefficient $-1$ (and each black triangle with coefficient zero). This is now a linear combination of white triangles only, and we claim that this linear combination gives the same vector of $\mathbb{Z}^{V\cup E \cup R}$ as the previous one. Indeed, $v$ has the same number of incident black and white triangles, and any red or emerald neighbor of $v$ also has the same number of black and white triangles incident to $v$. As an illustration, see the left panel of Figure \ref{fig:lemmas_illustration}.
	\end{proof}
    
	\begin{lemma} \label{lem:haromszog_ekv=>lin_ekv}
		If $(x_V,\mathbf{0},\mathbf{0})\wapr (y_V,\mathbf{0},\mathbf{0})$, then $x_V\sim y_V$.
	\end{lemma}
	\begin{proof}
    Let $(y_V,\mathbf{0},\mathbf{0})-(x_V,\mathbf{0},\mathbf{0})=\sum_{1}^m a_i \mathbf{1}_{T_i}$, where $T_1,\dots, T_m$ are white triangles.
    First we claim that $\sum_1^k a_i=0$. As in both $(x_V,\mathbf{0},\mathbf{0})$ and $(y_V,\mathbf{0},\mathbf{0})$ the number of chips on each emerald node is 0, for each emerald node, the coefficients of the triangles incident to it have to sum to zero. As each triangle has exactly one emerald node, this implies that $\sum_1^k a_i=0$.
    
    We prove the Lemma by induction on $\sum |a_i|$.
    If $\sum|a_i|=0$, then $(x_V,\mathbf{0},\mathbf{0})= (y_V,\mathbf{0},\mathbf{0})$, that is $x_V= y_V$, so $x_V\sim y_V$.
    
    Now assume that $\sum|a_i|>0$. As $\sum a_i=0$, there is a triangle $T_{i_1}$ such that $a_{i_1}<0$. Let us denote the violet, emerald and red nodes of the triangle respectively by $v_1$, $e_1$, and $r_1$. As the coefficients of the triangles incident to $e_1$ sum to 0, there is a triangle $T_{i_2}$ incident to $e_1$ such that $a_{i_2}>0$. Let us call the nodes of $T_{i_2}$ respectively $v_2$, $e_2$ and $r_2$ (hence $e_1=e_2$). Now as the coefficients of the triangles incident to $r_2$ also sum to $0$, there is a triangle $T_{i_3}$ incident to $r_2$ such that $a_{i_3}<0$. We can continue this reasoning until we see a triangle for the second time. We conclude that (after possibly renumbering the triangles) there exists a sequence of triangles $T_{i_1}, \dots, T_{i_{2k}}$ such that $a_{i_{2j-1}}<0$ and $a_{i_{2j}}>0$ for each $1\leq j\leq k$ and $T_{i_{2j-1}}$ and $T_{i_{2j}}$ are incident at the emerald node $e_{2j-1}$ and $T_{i_{2j}}$ and $T_{i_{2j+1}}$ are incident at the red node $r_{2j}$ (where the indices are meant modulo $2k$).
    
	Take the cycle $(e_1,r_2,e_3,\dots, e_{2k-1},r_{2k})$ (which is a cycle in $G_V$). This cycle divides the plane into two components, where the triangles $T_{i_1},\dots T_{i_{2k-1}}$ fall into one component, while the triangles $T_{i_2},\dots T_{i_{2k}}$ fall into the other component (see the right panel of Figure \ref{fig:lemmas_illustration}). Let us call $U$ the set of violet nodes falling into the component containing $T_{i_1},\dots T_{i_{2k-1}}$. Take the chip configuration $x_V+ L_{D_V}\mathbf{1}_U$ on the digraph $D_V$, i.e., start from $x_V$ and then fire all vertices in $U$. Clearly $x_V\sim  x_V+ L_{D_V}\mathbf{1}_U$ and thus by Lemma \ref{lem:lin_ekv=>haromszog_ekv}, $(x_V,0,0)\wapr (x_V+ L_{D_V}\mathbf{1}_U,0,0)$. Now the transitivity of the white triangle equivalence implies that $(x_V+ L_{D_V}\mathbf{1}_U,0,0)\wapr (y_V,0,0)$. We claim that there exist coefficients $b_i$ such that $(y_V,0,0)-(x_V+ L_{D_V}\mathbf{1}_U,0,0)=\sum_{1}^m b_i T_i$ and $\sum |b_i|<\sum |a_i|$. This will imply the statement of the lemma by the inductive hypothesis and the transitivity of $\sim$.
    
    To see that there exist such coefficients $b_i$, take $c_{i_{2j-1}}=1$ and $c_{i_{2j}}=-1$ for each $1\leq j\leq k$, and $c_i=0$ if $i\neq i_j$ for any $1\leq j\leq k$. We claim that $\sum c_i T_i = (L_{D_V}\mathbf{1}_U,0,0)$. 
    To see this, notice that the violet edges $e_{2i-1}r_{2i}$ correspond to the edges of $D_V$ from $U$ to $V-U$, and the violet edges $e_{2i}r_{2i+1}$ correspond to the edges of $D_V$ from $V-U$ to $U$. Also, notice that if each node in $U$ is fired, then each node in $v\in V-U$ receives $d(U,v)$ chips. Moreover, each node $v\in U$ loses $d^+(v)$ chips and gains $d(U,v)$ chips. Hence altogether, a node $v\in U$ loses $d(V-U,v)$ chips.
    Now in $\sum c_i T_i$ any node $v\in V-U$ gains as many chips as many triangles of the form $T_{i_{2j-1}}$ are incident to it, which is exactly $d(U,v)$. Moreover, any node $v\in U$ loses as many chips as many triangles of the form $T_{i_{2j}}$ are incident to it, which is exactly $d(V-U,v)$. By this we proved that $\sum c_i T_i = (L_{D_V}\mathbf{1}_U,0,0)$.
    
    Now for $b_i=a_i-c_i$, we see that $\sum b_i T_i=\sum a_i T_i - \sum c_i T_i = [(y_V,0,0)-(x_V,0,0)]-(L_{D_V}\mathbf{1}_U,0,0) = (y_V,0,0)-(x_V+ L_{D_V}\mathbf{1}_U,0,0)$, and since $a_{i_{2j-1}}<0$ and $a_{i_{2j}}>0$ for each $1\leq j\leq k$, we have $\sum |b_i|<\sum |a_i|$.
    \end{proof}
	
This finishes the proof of Theorem \ref{thm:sandpile_group_in_A_W}.
\end{proof}

\begin{cor}
Those equivalence classes of $\mathcal{A}_W$ that contain at least one element of the form $(x_V,\mathbf{0},\mathbf{0})$ with $\deg(x_V)=0$ form a group isomorphic to $\Pic^0(D_V)$.
\end{cor}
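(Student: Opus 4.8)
The plan is to deduce this immediately from Theorem \ref{thm:sandpile_group_in_A_W} by cutting out the degree-zero part. Recall that the proof of that theorem shows that the assignment sending the $\wapr$-class of $(x_V,\mathbf{0},\mathbf{0})$ to the linear equivalence class $[x_V]\in\Pic(D_V)$ is a well-defined group isomorphism $\Phi\colon G\to\Pic(D_V)$, where $G$ is the group of Theorem \ref{thm:sandpile_group_in_A_W}. So it suffices to identify the subset described in the statement with $\Phi^{-1}(\Pic^0(D_V))$ and to invoke that $\Phi$ is an isomorphism.

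First I would check that the degree is a well-defined function on $G$. A single $\wapr$-class in $G$ may contain several elements of the form $(x_V,\mathbf{0},\mathbf{0})$, but by Lemma \ref{lem:haromszog_ekv=>lin_ekv} any two of them have linearly equivalent violet parts $x_V$, and linearly equivalent chip configurations have equal degree. Hence $\deg(x_V)$ depends only on the class, giving a map $\deg\colon G\to\mathbb{Z}$. Since the group operation on $G$ is inherited from $\mathcal{A}_W$ and $(x_V,\mathbf{0},\mathbf{0})+(y_V,\mathbf{0},\mathbf{0})=(x_V+y_V,\mathbf{0},\mathbf{0})$, additivity of the degree shows that this map is a group homomorphism, and it agrees with the composition of $\Phi$ with the degree homomorphism $\Pic(D_V)\to\mathbb{Z}$.

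Next I would observe that the subset described in the corollary is exactly the kernel of $\deg\colon G\to\mathbb{Z}$, namely the classes admitting a representative $(x_V,\mathbf{0},\mathbf{0})$ with $\deg(x_V)=0$. Under $\Phi$ this kernel corresponds to the kernel of the degree homomorphism on $\Pic(D_V)$, which is by definition $\Pic^0(D_V)$. Since $\Phi$ is an isomorphism, it restricts to an isomorphism from this subgroup onto $\Pic^0(D_V)$, which is the claim.

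There is essentially no hard step here: the only point requiring care is the well-definedness of the degree on classes, and this is precisely what Lemma \ref{lem:haromszog_ekv=>lin_ekv} provides. Everything else is formal bookkeeping with the isomorphism already constructed in Theorem \ref{thm:sandpile_group_in_A_W}.
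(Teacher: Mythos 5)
Your proposal is correct and matches the route the paper intends: the corollary is stated without proof as an immediate consequence of Theorem \ref{thm:sandpile_group_in_A_W}, whose proof (via Lemmas \ref{lem:lin_ekv=>haromszog_ekv} and \ref{lem:haromszog_ekv=>lin_ekv}) identifies the $\wapr$-classes of elements $(x_V,\mathbf{0},\mathbf{0})$ with linear equivalence classes in $\Pic(D_V)$. Your observation that the degree is well defined on classes because linearly equivalent chip configurations have equal degree is exactly the point that makes the restriction to the degree-zero part work.
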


As the roles of the three colors are completely symmetric for trinities, we obtain the same result for $D_E$ and $D_R$ as well.

\begin{cor}
	Those equivalence classes of $\mathcal{A}_W$ that contain at least one element of the form $(\mathbf{0}, x_E, \mathbf{0})$ with $\deg(x_E)=0$ form a group isomorphic to $\Pic^0(D_E)$, while those equivalence classes of $\mathcal{A}_W$ that contain at least one element of the form $(\mathbf{0}, \mathbf{0}, x_R)$ with $\deg(x_R)=0$ form a group isomorphic to $\Pic^0(D_R)$.
\end{cor}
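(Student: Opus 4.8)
The plan is to derive both isomorphisms from Theorem~\ref{thm:sandpile_group_in_A_W} and the immediately preceding corollary by making the asserted symmetry of the three colors precise, rather than repeating the two lemmas with emerald or red in place of violet. Let $\sigma$ be the cyclic relabeling of colors that sends violet to emerald, emerald to red, and red to violet. Since $\sigma$ only recolors the $0$-simplices and leaves the underlying triangulation of $S^2$ untouched, it induces an isomorphism $\sigma_*\colon\mathcal{A}\to\mathcal{A}$ carrying a triple supported on $V$ to one supported on $E$, a triple supported on $E$ to one supported on $R$, and a triple supported on $R$ to one supported on $V$. In particular $\sigma_*$ sends the elements of the form $(x_V,\mathbf{0},\mathbf{0})$ bijectively onto the elements of the form $(\mathbf{0},x_E,\mathbf{0})$, preserving degree.

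The first thing I would check is that $\sigma_*$ descends to an automorphism of $\mathcal{A}_W$, i.e.\ that it preserves white triangle equivalence. A triangle is white exactly when its red, emerald and violet vertices occur in clockwise cyclic order. Because $\sigma$ permutes the three colors \emph{cyclically}, it sends this clockwise sequence to another clockwise cyclic sequence, so every white triangle is recolored into a white triangle and every black triangle into a black triangle. Consequently $\sigma_*$ maps the characteristic vector of a white triangle to the characteristic vector of a white triangle, hence carries the subgroup generated by white triangles onto itself and induces an automorphism $\bar\sigma$ of $\mathcal{A}_W=\mathcal{A}/{\wapr}$. This is the one point that genuinely uses cyclicity: a transposition of two colors would interchange white and black triangles and would break the argument.

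Next I would verify that $\sigma$ identifies the digraphs themselves. Recall that a violet edge joins a red and an emerald point, and that an edge of $D_V$ runs from $v_1$ to $v_2$ when a black triangle at $v_1$ and a white triangle at $v_2$ meet along their common violet edge. Applying $\sigma$, violet points become emerald points, a violet edge becomes an emerald edge (its missing color becomes emerald), and the black/white labels are unchanged by the previous paragraph; thus an edge of $D_V$ is carried precisely to an edge of $D_E$ defined in the same way, orientations included. Hence $\sigma$ is an isomorphism of digraphs $D_V\cong D_E$, and it therefore intertwines linear equivalence on $D_V$ with linear equivalence on $D_E$ (this is also visible directly from Lemmas~\ref{lem:lin_ekv=>haromszog_ekv} and \ref{lem:haromszog_ekv=>lin_ekv}, whose statements are manifestly symmetric under $\sigma$).

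Combining these observations finishes the argument. The automorphism $\bar\sigma$ carries the group of equivalence classes of $\mathcal{A}_W$ containing some $(x_V,\mathbf{0},\mathbf{0})$ with $\deg(x_V)=0$ — which by the preceding corollary is isomorphic to $\Pic^0(D_V)$ — isomorphically onto the group of classes containing some $(\mathbf{0},x_E,\mathbf{0})$ with $\deg(x_E)=0$, and via $D_V\cong D_E$ the latter group is then isomorphic to $\Pic^0(D_E)$. Applying $\sigma^2$ in place of $\sigma$ (equivalently, running the same step once more around the cycle) gives in the same way that the classes containing some $(\mathbf{0},\mathbf{0},x_R)$ with $\deg(x_R)=0$ form a group isomorphic to $\Pic^0(D_R)$. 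The only real content, and the step I would treat most carefully, is the invariance of the white/black coloring under the cyclic recoloring; everything else is formal transport of structure along $\sigma$.
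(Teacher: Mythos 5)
Your argument is correct and is essentially the paper's own proof: the paper disposes of this corollary in one sentence by invoking the complete symmetry of the three colors, and your writeup simply makes that symmetry precise --- in particular you correctly isolate the one point with real content, namely that a \emph{cyclic} recoloring preserves the white/black classification of triangles and hence descends to $\mathcal{A}_W$, whereas a transposition would swap the two. The only bookkeeping quibble is that $\sigma_*$ is best understood as the identity on the underlying free abelian group with the color labels reinterpreted (the recoloring moves no points, and $|V|$, $|E|$, $|R|$ need not have equal cardinalities, so there is no point-level bijection carrying $V$ onto $E$); with $\sigma$ as you chose it, one application makes the old red points the new violet points and so yields the $D_R$ statement first, but none of this affects the validity of the argument.
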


The above embedding of the three sandpile groups immediately gives an isomorphism between them.

\begin{thm}\label{thm:isomorphism_between_sandpile_groups}
Let $\varphi_{V\to E}: \Pic^0(D_V) \to \Pic^0(D_E)$ be defined by
$\varphi_{V\to E}([x])=[y]$ where $(x,\mathbf{0},\mathbf{0})\wapr (\mathbf{0},-y,\mathbf{0})$. Then $\varphi_{V\to E}$ is well-defined and is an isomorphism between $\Pic^0(D_V)$ and $\Pic^0(D_E)$.
\end{thm}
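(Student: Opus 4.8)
The plan is to realize $\varphi_{V\to E}$ through the common embedding of the two sandpile groups into $\mathcal{A}_W$ furnished by Theorem \ref{thm:sandpile_group_in_A_W} and its corollaries. Write $G_V^0$ for the subgroup of $\mathcal{A}_W$ of those classes containing a representative $(x_V,\mathbf{0},\mathbf{0})$ with $\deg(x_V)=0$, and $G_E^0$ for those containing some $(\mathbf{0},x_E,\mathbf{0})$ with $\deg(x_E)=0$; by the corollaries these are isomorphic to $\Pic^0(D_V)$ and $\Pic^0(D_E)$. The entire theorem reduces to the claim that, \emph{as subgroups of $\mathcal{A}_W$}, one has $G_V^0=G_E^0$: once this is known, for $[x]\in\Pic^0(D_V)$ the class $[(x,\mathbf{0},\mathbf{0})]\in G_V^0=G_E^0$ has a representative $(\mathbf{0},-y,\mathbf{0})$ with $\deg(y)=0$, and I set $\varphi_{V\to E}([x])=[y]$.

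The formal properties are then routine. Independence of the representative $x$ is immediate from Lemma \ref{lem:lin_ekv=>haromszog_ekv}: if $x\sim x'$ then $(x,\mathbf{0},\mathbf{0})\wapr(x',\mathbf{0},\mathbf{0})$, so any emerald representative of one is a representative of the other. Uniqueness of $[y]$ is the emerald analogue of Lemma \ref{lem:haromszog_ekv=>lin_ekv}: if $(\mathbf{0},-y,\mathbf{0})\wapr(\mathbf{0},-y',\mathbf{0})$ then $-y\sim-y'$ on $D_E$, so $[y]=[y']$ (and in particular $\deg(y)=\deg(y')$). Additivity of $\varphi_{V\to E}$ is inherited from the additivity of $\wapr$, so once it is well defined it is automatically a homomorphism.

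The crux is the equality $G_V^0=G_E^0$, which I would obtain by a counting argument. Both $G_V^0$ and $G_E^0$ are finite, hence sit inside the torsion subgroup $T$ of $\mathcal{A}_W$. By Blackburn--McCourt \cite{BMcC13} one has $\mathcal{A}_W\cong\mathbb{Z}^2\times\Pic^0(D_V)$, and all three sandpile groups are isomorphic to $T$; therefore $|G_V^0|=|\Pic^0(D_V)|=|T|=|\Pic^0(D_E)|=|G_E^0|$. Two subgroups of $T$ each of order $|T|$ must coincide with $T$, so $G_V^0=T=G_E^0$, giving the sought existence of $y$. (Conceptually, the two free $\mathbb{Z}$-summands are detected by the homomorphism $\Phi\colon\mathcal{A}_W\to\mathbb{Z}^2$ sending a class to $(\deg_V-\deg_E,\,\deg_E-\deg_R)$, which is well defined because every white triangle contributes equally to all three colour-degrees; its kernel is exactly $T$, and $\Phi$ vanishes on $[(x,\mathbf{0},\mathbf{0})]$ when $\deg(x)=0$, which is the abstract reason $\deg(y)=0$ is forced.)

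Finally, $\varphi_{V\to E}$ is an isomorphism: injectivity is Lemma \ref{lem:haromszog_ekv=>lin_ekv}, since $\varphi_{V\to E}([x])=0$ means $(x,\mathbf{0},\mathbf{0})\wapr(\mathbf{0},\mathbf{0},\mathbf{0})$, whence $x\sim\mathbf{0}$; surjectivity then follows either from $|\Pic^0(D_V)|=|\Pic^0(D_E)|$ or, more symmetrically, by running the same construction with the colours interchanged to build a two-sided inverse $\varphi_{E\to V}$. I expect the genuine obstacle to be precisely the step $G_V^0=G_E^0$. The formal bookkeeping is easy, but converting the abstract coincidence of the two embedded copies into the concrete assertion that \emph{every} violet class admits an emerald representative seems to need either the cardinality input of Blackburn--McCourt used above, or else a direct transportation argument producing an integer combination of white triangles with prescribed violet and vanishing red marginals; the feasibility of the latter hinges on a connectivity property of the incidence between white triangles and the violet/red points which is considerably less transparent than the counting route, so I would adopt the counting argument.
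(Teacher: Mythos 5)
Your proof is correct, and it diverges from the paper's at exactly the point you identified as the crux. The paper establishes the existence of an emerald representative for each class $[(x,\mathbf{0},\mathbf{0})]$ with $\deg(x)=0$ by a direct transportation argument: pick violet nodes $v,u$ with $x(v)>0>x(u)$, take a path from $v$ to $u$ in the emerald graph $G_E$, and assign alternating coefficients $\mp 1$ to the white triangles sitting along that path; this moves a chip from $v$ to $u$ (depositing the surplus on emerald nodes) while keeping every red coordinate at $0$, and iterating clears all violet and red coordinates. The connectivity input you worried about is thus only the connectedness of $G_E$, which is assumed throughout, so the transportation route is in fact quite transparent. Your counting route --- both $G_V^0$ and $G_E^0$ are finite subgroups of the torsion subgroup $T$ of $\mathcal{A}_W$ of full order $|T|$, hence both equal $T$ --- is a valid shortcut, but it imports the Blackburn--McCourt isomorphism $\mathcal{A}_W\cong\mathbb{Z}^2\times\Pic^0(D_V)$ (and the fact that all three sandpile groups are isomorphic to $T$) as a black box, whereas one stated aim of the paper is to give a self-contained, ``more natural'' embedding independent of \cite{BMcC13}. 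The constructive version also buys something used later: in the proofs of Theorems \ref{thm:Bernardi_compaticle_w_sandpile_action} and \ref{thm:rotor_Bernardi_the_same} the authors explicitly \emph{compute} $\varphi_{V\to E}$ on specific classes by exhibiting the white-triangle combination, which your existence-by-cardinality argument does not by itself provide. The remaining bookkeeping in your write-up (well-definedness via Lemmas \ref{lem:lin_ekv=>haromszog_ekv} and \ref{lem:haromszog_ekv=>lin_ekv}, additivity, injectivity, and surjectivity by symmetry) matches the paper's.
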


\begin{proof} 
We show the well-definedness of $\varphi_{V\to E}$. We claim that if $(x,\mathbf{0},\mathbf{0})\wapr (\mathbf{0},-y,\mathbf{0})$ and $(x',\mathbf{0},\mathbf{0})\wapr (\mathbf{0},-y',\mathbf{0})$ for $x\sim x'$ (in $D_V$), then $y\sim y'$ in $D_E$. Indeed, Lemma \ref{lem:lin_ekv=>haromszog_ekv} implies $(x,\mathbf{0},\mathbf{0})\wapr (x',\mathbf{0},\mathbf{0})$, moreover, by the transitivity of $\wapr$, we have $(\mathbf{0},-y, \mathbf{0})\wapr (\mathbf{0},-y',\mathbf{0})$, hence also $(\mathbf{0},y, \mathbf{0})\wapr (\mathbf{0},y',\mathbf{0})$. By Lemma \ref{lem:haromszog_ekv=>lin_ekv} applied to $D_E$, this implies $y\sim y'$ (in $D_E$).

Now we show that for any $x\in \Pic^0(D_V)$ there exists $y\in \Pic^0(D_E)$ such that $(x,\mathbf{0},\mathbf{0})\wapr (\mathbf{0},-y,\mathbf{0})$.
If $x=\mathbf{0}$, then $y=\mathbf{0}$ is a good choice. If there exist a violet node $v$ with $x(v)>0$, then as the sum of chips in $x$ is zero, there exists another violet node $u$ with $x(u)<0$.
Choose a path in $G_E$ connecting $v$ with $u$ (there exists a path between them because of the connectedness of $G_E$). Now give weights $-1$ and $+1$ alternately to the white triangles incident with the path such that the triangle incident to $v$ gets coefficient $-1$ (see Figure \ref{fig:wel_def_of_isomorphism}). Then by parity, the triangle incident with $u$ has coefficient $+1$. Adding the characteristic vectors of these triangles with these weights to $(x,\mathbf{0},\mathbf{0})$, we decreased the sum of the absolute values of the chips on violet vertices, while the number of chips on each red node remained $0$. Continuing this way we reach a state with no chips on any violet or red node.

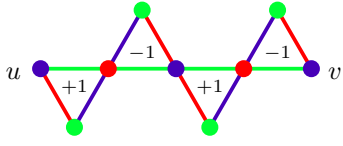
\begin{figure}
\begin{center}
	\begin{tikzpicture}[-,>=stealth',auto,scale=0.9,thick]
	\tikzstyle{u}=[circle]
	\node[u]  at (-1.9, 0.8) {$u$};
	\node[u]  at (2.85, 0.8) {$v$};
    \node[u]  at (-1, 0.6) {\scriptsize{$+1$}};
    \node[u]  at (0, 1.1) {\scriptsize{$-1$}};
    \node[u]  at (1, 0.6) {\scriptsize{$+1$}};
    \node[u]  at (2, 1.1) {\scriptsize{$-1$}};
	\path[every node/.style={font=\sffamily\small},color=r,line width=0.5mm]
	(-1.5, 0.866) edge node {} (-1, 0)
	(0.5, 0.866) edge node {} (0, 1.732)
	(0.5, 0.866) edge node {} (1, 0)
	(2.5, 0.866) edge node {} (2, 1.732);
	\path[every node/.style={font=\sffamily\small},color=v,line width=0.5mm]
	(-0.5, 0.866) edge node {} (-1, 0)
	(-0.5, 0.866) edge node {} (0, 1.732)
	(1.5, 0.866) edge node {} (1, 0)
	(1.5, 0.866) edge node {} (2, 1.732);
	\path[every node/.style={font=\sffamily\small},color=e,line width=0.5mm]
	(-1.5, 0.866) edge node {} (-0.5, 0.866)
	(-0.5, 0.866) edge node {} (0.5, 0.866)
	(0.5, 0.866) edge node {} (1.5, 0.866)
	(1.5, 0.866) edge node {} (2.5, 0.866);
	\draw [fill=v,v] (-1.5, 0.866) circle [radius=0.11];
	\draw [fill=r,r] (-0.5, 0.866) circle [radius=0.11];
	\draw [fill=v,v] (0.5, 0.866) circle [radius=0.11];
	\draw [fill=r,r] (1.5, 0.866) circle [radius=0.11];
	\draw [fill=v,v] (2.5, 0.866) circle [radius=0.11];
	\draw [fill=e,e] (-1, 0) circle [radius=0.11];
	\draw [fill=e,e] (0, 1.732) circle [radius=0.11];
	\draw [fill=e,e] (1, 0) circle [radius=0.11];
	\draw [fill=e,e] (2, 1.732) circle [radius=0.11];
	\end{tikzpicture}
	
\end{center}
\caption{Illustration for the proof of Theorem \ref{thm:isomorphism_between_sandpile_groups}.}
\label{fig:wel_def_of_isomorphism}
\end{figure}

We have shown that $\varphi_{V\to E}$ is well defined. Interchanging the roles of $V$ and $E$, the above two claims tell us that $\varphi_{V\to E}$ is injective and surjective.

It also follows immediately that $\varphi_{V\to E}$ is a homomorphism. Indeed, if we assume that $\varphi_{V\to E}([x_1])=[y_1]$, $\varphi_{V\to E}([x_2])=[y_2]$, and $\varphi_{V\to E}([x_1]+[x_2])=[y_0]$, then
\begin{align*}
(\mathbf{0},-y_0, \mathbf{0})\wapr (x_1 + x_2,\mathbf{0},\mathbf{0})\wapr (x_1,\mathbf{0},\mathbf{0}) + (x_2,\mathbf{0},\mathbf{0})\wapr \\
(\mathbf{0},-y_1,\mathbf{0}) + (\mathbf{0},-y_2,\mathbf{0})\wapr  (\mathbf{0},-(y_1+y_2),\mathbf{0}).
\end{align*}
Since $\varphi_{V\to E}$ is well defined, it follows that $-y_0 \sim -(y_1+y_2)$, and hence $\varphi_{V\to E}([x_1+x_2]) = \varphi_{V\to E}([x_1])+\varphi_{V\to E}([x_2])$.
\end{proof}

We can also define $\psi_{V\to E}: \Pic^0(D_V) \to \Pic^0(D_E)$ with $\psi([x])=[y]$ such that $(x,\mathbf{0}, \mathbf{0}) \wapr (\mathbf{0}, y, \mathbf{0})$. 
With a completely analogous proof, one can show that $\psi_{V\to E}$ is also an isomorphism between $\Pic^0(D_V)$ and $\Pic^0(D_E)$.

We claim that for planar undirected graphs, $\varphi_{V\to R}:\Pic^0(D_V) \to \Pic^0(D_R)$ agrees with the canonical isomorphism $i$ explained in Subsection \ref{ss:sandpile_and_Bernardi}. Note that the definition of $i$ was not canonical (it used (though was independent of) a choice of orientation of the graph), while the definition of $\varphi_{V\to R}$ is canonical.

\begin{prop} \label{prop:isomorphism_agree}
For planar undirected graphs, $\varphi_{V\to R}$ agrees with $i$.
\end{prop}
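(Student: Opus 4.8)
The plan is to reduce the statement to a purely local computation at each emerald node, exhibiting for every edge of $G$ a difference of two white triangles whose violet part is the $G$-coboundary $\delta_{\overrightarrow e}$ and whose red part is the $G^*$-coboundary $\delta_{\overrightarrow e^*}$, with the emerald part cancelling. Concretely, fix a reference orientation $\{\overrightarrow e\}$ of $G$ as in the definition of $i$. For $x\in\Div^0(D_V)=\mathbb{Z}_0^V$ write $x=\sum_{e\in E}a_{\overrightarrow e}\,\delta_{\overrightarrow e}$; by the definition of $i$ we have $i([x])=\big[\sum_{e}a_{\overrightarrow e}\,\delta_{\overrightarrow e^*}\big]$. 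I would then produce an explicit integer combination $W$ of white triangles with $W=(x,\mathbf 0,z)$, where $z=\sum_e a_{\overrightarrow e}\,\delta_{\overrightarrow e^*}$; since $W$ is then white-triangle equivalent to $\mathbf 0$, this gives $(x,\mathbf 0,\mathbf 0)\wapr(\mathbf 0,\mathbf 0,-z)$, so by the $V\to R$ analogue of Theorem \ref{thm:isomorphism_between_sandpile_groups} defining $\varphi_{V\to R}$ we conclude $\varphi_{V\to R}([x])=[z]=i([x])$.

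The heart of the argument is the local analysis. Let $e=ab$ be an edge of $G$ and let $r_1,r_2$ be the two regions of $G$ adjacent to $e$, which in the trinity become the red nodes of the two cells sharing the dual edge $e^*$. The emerald node subdividing $e$ is incident to exactly four triangles, whose other two vertices run through $a,r_1,b,r_2$ in cyclic order; by the alternation of colours exactly two of them are white, and one checks that these two white triangles are incident to $\{a,r_1\}$ and $\{b,r_2\}$ respectively, each meeting one endpoint of $e$ and one region adjacent to $e$. Calling their characteristic vectors $T_a^e$ and $T_b^e$, the emerald coordinate equals $\mathbf 1_e$ in both, so in the difference $T_a^e-T_b^e$ the emerald part cancels, leaving $(\mathbf 1_a-\mathbf 1_b,\ \mathbf 0,\ \mathbf 1_{r_1}-\mathbf 1_{r_2})$.

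It remains to match these to the coboundary vectors with the correct signs. Using the positive orientation of the plane that defines the white/black colouring, together with the convention that $\overrightarrow e^*$ is obtained from $\overrightarrow e$ by a negative (clockwise) quarter turn, a direct check of the four triangles shows that the white triangle at $a$ uses the region on the tail side of $\overrightarrow e^*$ and the one at $b$ the head side; that is, $\mathbf 1_a-\mathbf 1_b=-\delta_{\overrightarrow e}$ and $\mathbf 1_{r_1}-\mathbf 1_{r_2}=-\delta_{\overrightarrow e^*}$, so
\[
T_a^e-T_b^e=\big(-\delta_{\overrightarrow e},\ \mathbf 0,\ -\delta_{\overrightarrow e^*}\big).
\]
Setting $W=-\sum_{e}a_{\overrightarrow e}\,(T_a^e-T_b^e)=\sum_e a_{\overrightarrow e}\,(\delta_{\overrightarrow e},\mathbf 0,\delta_{\overrightarrow e^*})=(x,\mathbf 0,z)$ then completes the reduction described in the first paragraph.

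I expect the only real obstacle to be this sign/orientation bookkeeping in the local picture: one must verify that the white--black colouring forces exactly the pairing ``$a$ with $r_1$, $b$ with $r_2$'', and that this is the pairing dictated by the negative-turn convention orienting $G^*$, so that the red part of $T_a^e-T_b^e$ comes out as $-\delta_{\overrightarrow e^*}$ rather than $+\delta_{\overrightarrow e^*}$ (an error here would produce the inverse isomorphism). Everything else --- existence of the coefficients $a_{\overrightarrow e}$, independence of all choices, and the passage from $W\wapr\mathbf 0$ to the defining relation of $\varphi_{V\to R}$ --- is immediate from the definition of $i$ and from the well-definedness established in Theorem \ref{thm:isomorphism_between_sandpile_groups}.
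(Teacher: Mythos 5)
Your proposal is correct and follows essentially the same route as the paper: both reduce to the single local identity that the two white triangles at the emerald node of $e$ pair the head of $\overrightarrow e$ with the head of $\overrightarrow e^*$ and the tail with the tail, so that their difference equals $(\delta_{\overrightarrow e},\mathbf 0,\delta_{\overrightarrow e^*})$, and then conclude via the defining relation of $\varphi_{V\to R}$. The paper merely phrases the reduction edge-by-edge using that $\varphi_{V\to R}$ is a homomorphism, where you sum over all edges at once; your sign analysis matches the paper's convention exactly.
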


\begin{proof}
	Let $G$ be a planar undirected graph, and take the corresponding trinity.
We need to show that for an arbitrary orientation of $G$, and any $\{a_{\overrightarrow{e}}: e\in E\}$ we have $\varphi_{V\to R}([\sum_{e\in E} a_{\overrightarrow{e}} \delta_{\overrightarrow{e}}])=[\sum_{e\in E} a_{\overrightarrow{e}} \delta_{\overrightarrow{e}^*}]$. For this, it is enough to show that for any $e\in E$, we have $\varphi_{V\to R} ([\delta_{\overrightarrow{e}}]) = [\delta_{\overrightarrow{e}^*}]$. Let $v_h\in V$ be the head of $\overrightarrow{e}$ and let $v_t\in V$ be the tail of $\overrightarrow{e}$. Similarly, let $r_h\in R$ be the head of $\overrightarrow{e}^*$ and let $r_t\in R$ be the tail of $\overrightarrow{e}^*$. Let us also denote the emerald node corresponding to $e$ by $e$. We need to show that
$(\mathbf{1}_{v_h}-\mathbf{1}_{v_t},\mathbf{0},\mathbf{0})\wapr (\mathbf{0},\mathbf{0},\mathbf{1}_{r_t}-\mathbf{1}_{r_h})$ or in other words, $(\mathbf{1}_{v_h}-\mathbf{1}_{v_t},\mathbf{0},\mathbf{1}_{r_h}-\mathbf{1}_{r_t})\wapr (\mathbf{0},\mathbf{0},\mathbf{0})$. 
But the relationship of $\overrightarrow{e}$ and $\overrightarrow{e}^*$ implies that $v_h, r_h, e$ are the vertices of a white triangle, and $v_t, r_t, e$ are vertices of another white triangle. Hence taking the first triangle with coefficient $1$ and the second triangle with coefficient $-1$ proves the statement.
\end{proof}

\section{The Bernardi action agrees with the 
sandpile action on $D_E$}
\label{sec:Bernardi_and_sandpile_actions}

In this section, we give a canonical defintion for the planar Bernardi action of Baker and Wang by showing that it agrees with the natural sandpile action of $\Pic^0(D_E)$ on $\Pic^{|V|-1}(D_E)$ via the natural isomorphism $\varphi_{V\to E}:\Pic^0(D_V) \to \Pic^0(D_E)$. We conclude that the Bernardi action is independent of the base point for balanced plane digraphs, and we also give a simple proof for the compatibility with planar duality.

Let us repeat the definition of the Bernardi action using hypertree terminology, and discuss how the definition works for balanced plane digraphs.

For a graph $G$, the sandpile group $\Pic^0(G)$ acts on $\Pic^{|E|-1}(G)$ by addition: For $x\in \Pic^0(G)$ and $f\in \Pic^{|E|-1}(G)$, we let $x\cdot f= x+f$. Since by Theorem \ref{thm:break_represent}, the hypertrees in $B_V(\bip G)$ give a system of representatives for $\Pic^{|E|-1}(G)$, we can think of this natural action as the action of $\Pic^0(G)$ on $B_V(\bip G)$: for $x\in \Pic^0(G)$ and $f\in B_V(\bip G)$, we have $x\cdot f = x \oplus f$, where by $x\oplus f$ we denote the unique hypertree in $B_V(\bip G)$ in the linear equivalence class of $x + [f]$, which exists by Theorem \ref{thm:break_represent}. We call this group action the \emph{sandpile action}. Using Theorem \ref{thm:planar_hypertrees_represent} we can analogously define the sandpile action for balanced plane digraphs. For such a digraph $D_V$, the group $Pic^0(D_V)$ acts on $B_V(G_R)$ in the following way: For $x\in Pic^0(D_V)$ and $f\in B_V(G_R)$ we have $x\cdot f = x \oplus f$, where $x \oplus f$ is the unique hypertree in $B_V(G_R)$ in the linear equivalence class of $x + f$. Here linear equivalence is meant for $D_V$. Such a unique hypertree exists by Theorem \ref{thm:planar_hypertrees_represent}.

The Bernardi action is defined by pulling the sandpile action of $\Pic^0(G)$ from $B_V(\bip G)$ to $B_E(\bip G)$ using a Bernardi bijection:
for $x\in \Pic^0(G)$ and a hypertree $f\in B_E(\Bip G)$, 
$x\cdot f=\beta^{-1}_{b_0,b_1}(x\oplus \beta_{b_0,b_1}(f))$. Once again we can define the Bernardi action for a balanced plane digraph $D_V$ in the analogous way, replacing $\bip G$ by $G_R$, and thus (for each choice of $b_0$ and $b_1$) obtain a group action of $\Pic^0(D_V)$ on $B_E(G_R)$. From now on, we will concentrate on the case of balanced plane digraphs.

Let us clarify the relationship of our definition to the original definition of Baker and Wang. Take an undirected graph $G$. By Proposition \ref{prop:hypertree_break_div_relationship}, $f\in B_V(\Bip G)$ is a hypertree if and only if $d_G-\mathbf{1}-f$ is a break divisor. Hence if $x\oplus f=g$, that is, $g$ is a hypertree with $x+f\sim g$, then $d_G-\mathbf{1}-g$ is a break divisor linearly equivalent to $d_G-\mathbf{1}-f-x$.
That is, the sandpile action of $x\in\Pic^0(G)$ on a hypertree in $B_V(\Bip G)$ agrees with the sandpile action of $-x$ on the corresponding break divisor. Hence for a spanning tree $T$, the image $x\cdot T$ in our definition corresponds to $-x\cdot T$ in the definition of Baker and Wang, but here $x\mapsto -x$ is obviously an automorphism of $\Pic^0(G)$.

The following theorem is the main technical result of the section.

\begin{thm} \label{thm:Bernardi_compaticle_w_sandpile_action}
For a balanced plane digraph with ribbon structure coming from the positive orientation of the plane, and any choice of $\{b_0, b_1\}$, the Bernardi bijection commutes with the sandpile actions. That is, the following diagram is commutative.
\begin{center}
\begin{tikzpicture}
  \matrix (m) [matrix of math nodes,row sep=3em,column sep=4em,minimum width=2em]
  {
     B_V(G_R) & B_V(G_R) \\
     B_E(G_R) & B_E(G_R) \\};
  \path[-stealth]
    (m-2-1) edge node [left] {$\beta_{b_0,b_1}$} (m-1-1)
    (m-1-1) edge node [below] {$x$} (m-1-2)
    (m-2-1.east|-m-2-2) edge node [above] {$\varphi_{V\to E}(x)$} (m-2-2)
    (m-2-2) edge node [right] {$\beta_{b_0,b_1}$} (m-1-2);
\end{tikzpicture}
\end{center}
\end{thm}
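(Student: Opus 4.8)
The plan is to transport the whole statement into the trinity sandpile group $\mathcal{A}_W$ and reduce it to a single white triangle equivalence. Write $g=\beta_{b_0,b_1}(f)$ and $f_1=\varphi_{V\to E}(x)\cdot f$. Both $x\cdot g$ and $\beta_{b_0,b_1}(f_1)$ are hypertrees in $B_V(G_R)$, and by Theorem \ref{thm:planar_hypertrees_represent} each linear equivalence class in $\Pic^{|E|-1}(D_V)$ contains a unique hypertree, so it suffices to show $\beta_{b_0,b_1}(f_1)\sim x+g$ in $D_V$, i.e.\ $[\beta_{b_0,b_1}(f_1)-\beta_{b_0,b_1}(f)]=[x]$ in $\Pic^0(D_V)$. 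Since $[f_1-f]=\varphi_{V\to E}(x)$ in $\Pic^0(D_E)$ by definition of the sandpile action, and $\varphi_{V\to E}$ is injective (Theorem \ref{thm:isomorphism_between_sandpile_groups}), the whole theorem follows once I prove, for \emph{arbitrary} $f,f_1\in B_E(G_R)$ with Bernardi images $g=\beta_{b_0,b_1}(f)$ and $g_1=\beta_{b_0,b_1}(f_1)$, the identity $\varphi_{V\to E}([g_1-g])=[f_1-f]$. Unravelling the definition of $\varphi_{V\to E}$, this is exactly the white triangle equivalence $(g_1-g,\,f_1-f,\,\mathbf{0})\approx_W\mathbf{0}$, which becomes the key claim.

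To attack the key claim I would first record an elementary ``tree sum'' identity valid for every spanning tree $T$ of $G_R$. Each red edge $\varepsilon v$ borders exactly one white triangle $W(\varepsilon v)$, whose vertices are $v\in V$, $\varepsilon\in E$, and a red vertex $r(\varepsilon v)\in R$; distinct red edges border distinct white triangles. Summing characteristic vectors over the edges of $T$ gives $\sum_{\varepsilon v\in T}\mathbf{1}_{W(\varepsilon v)}=(f_V(T)+\mathbf{1},\,f_E(T)+\mathbf{1},\,\rho_T)$, where $\rho_T(r)$ counts the edges $\varepsilon v\in T$ with $r(\varepsilon v)=r$; being a combination of white triangles, this vector is $\approx_W\mathbf{0}$. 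Let $T,T_1$ be the unique $V$-cut Jaeger trees representing $f,f_1$ (Theorem \ref{thm:Jaeger_trees_represent_hypertrees}), so that $f=f_E(T)$, $g=f_V(T)$ and likewise for $T_1$. Subtracting cancels the $\mathbf{1}$'s and yields $(g_1-g,\,f_1-f,\,\rho_{T_1}-\rho_T)\approx_W\mathbf{0}$. Comparing with the key claim, the theorem reduces to the purely ``red'' statement $(\mathbf{0},\mathbf{0},\rho_{T_1}-\rho_T)\approx_W\mathbf{0}$, which by the analogue of Lemmas \ref{lem:lin_ekv=>haromszog_ekv} and \ref{lem:haromszog_ekv=>lin_ekv} for the color $R$ is equivalent to $\rho_{T_1}\sim\rho_T$ in $D_R$.

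The hard part is precisely this last reduction, and it is where I expect the main obstacle to lie. The equivalence $\rho_{T_1}\sim\rho_T$ is \emph{false} for general spanning trees: a single edge exchange $T_1=T-\varepsilon v+\varepsilon'v'$ changes $\rho$ by $\mathbf{1}_{r(\varepsilon'v')}-\mathbf{1}_{r(\varepsilon v)}$, which is not a boundary in $D_R$ in general, so the statement must exploit the Jaeger/planar structure essentially. Here I would invoke the description of $\beta_{b_0,b_1}$ through the Bernardi process and the root polytope dissection of \cite{Hyper_Bernardi}: the aim is to show that along the tour of a $V$-cut Jaeger tree the red vertices recorded by $\rho_T$ are governed by the planar boundary walk so tightly that moving from one Jaeger tree to the next in the shelling of the dissection alters $\rho_T$ only by a chip-firing move on $D_R$, whence chaining these moves gives $\rho_{T_1}\sim\rho_T$ for all $V$-cut Jaeger trees. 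An appealing alternative, using the complete symmetry of the three colors in a trinity, is to reinterpret $\rho_T$ via the companion Jaeger structure associated with $G_E$ and $G_V$ and to identify its $D_R$-class intrinsically. Establishing this ``red invariance'' is, I expect, the genuine technical content of the proof, whereas the reductions of the first two paragraphs are formal consequences of the machinery already in place.
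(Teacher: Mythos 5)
Your reductions are correct, and the intermediate framing via the ``tree sum'' identity $\sum_{\varepsilon v\in T}\mathbf{1}_{W(\varepsilon v)}=(f_V(T)+\mathbf{1},\,f_E(T)+\mathbf{1},\,\rho_T)\wapr\mathbf{0}$ is a clean (and valid) way to isolate what remains to be shown. But the proposal stops exactly at the genuine content of the theorem: you reduce everything to the claim $\rho_{T_1}\sim\rho_T$ in $D_R$ for any two $V$-cut Jaeger trees, correctly observe that this fails for general spanning trees, and then only gesture at a strategy (root polytope dissections, shellings, ``the aim is to show\dots'') without carrying it out. As written, this is a plan for a proof of the key step, not a proof, so there is a real gap.

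The gap is, however, closable in two lines from a tool you already had available but did not invoke, namely Proposition \ref{prop:Jaeger--arborescence}. The red vertex $r(\varepsilon v)$ of the white triangle bordering the red edge $\varepsilon v$ is precisely the head of the dual arc $(\varepsilon v)^*$ in $D_R$, and the dual arcs of the edges \emph{of} $T$ are exactly the arcs of $D_R$ \emph{not} in $T^*$. Hence $\rho_T(r)=d^-_{D_R}(r)-d^-_{T^*}(r)$; since $T^*$ is an arborescence of $D_R$ rooted at $s_0$ whenever $T$ is a $V$-cut Jaeger tree, every red node other than $s_0$ has indegree $1$ in $T^*$ and $s_0$ has indegree $0$, so $\rho_T$ is literally the same vector for every $V$-cut Jaeger tree and $\rho_{T_1}=\rho_T$ on the nose --- no chip-firing argument on $D_R$ is needed. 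This is the same arborescence-indegree fact that drives the paper's own proof, where the exchanged edges of two Jaeger trees are paired off by their common red head $r_i$ and the resulting white triangles $r_iv_ie_i$, $r_iv'_ie'_i$ are taken with coefficients $\mp1$; your route packages that pairing into the invariance of $\rho_T$, which is arguably more transparent, but without this final step the argument is incomplete.
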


Let us first discuss the corollaries of Theorem \ref{thm:Bernardi_compaticle_w_sandpile_action}. First of all, we obtain a canonical (once the choice of positive orientation is fixed) definition for the Bernardi action on balanced plane digraphs:

\begin{cor}\label{cor:Bernardi_can_def}
	For a balanced plane digraph $D_V$, element $x\in \Pic^0(D_V)$, and $f\in B_E(G_R)$, we have $x\cdot f= \varphi_{V\to E}(x) \oplus f$.
\end{cor}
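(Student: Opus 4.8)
The goal is to derive Corollary \ref{cor:Bernardi_can_def} directly from Theorem \ref{thm:Bernardi_compaticle_w_sandpile_action}, so the plan is to unwind the definitions and read off the conclusion from the commutative diagram. Recall that the Bernardi action on $B_E(G_R)$ was \emph{defined} as $x\cdot f=\beta^{-1}_{b_0,b_1}(x\oplus\beta_{b_0,b_1}(f))$, where $\beta_{b_0,b_1}\colon B_E(G_R)\to B_V(G_R)$ is the generalized Bernardi bijection, $\oplus$ denotes the sandpile action of $\Pic^0(D_V)$ on $B_V(G_R)$ (well-defined by Theorem \ref{thm:planar_hypertrees_represent}), and $\oplus$ on the right will instead denote the sandpile action of $\Pic^0(D_E)$ on $B_E(G_R)$.

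First I would fix an element $x\in\Pic^0(D_V)$ and a hypertree $f\in B_E(G_R)$, and chase both around the square in Theorem \ref{thm:Bernardi_compaticle_w_sandpile_action}. Going up-then-across sends $f\mapsto\beta_{b_0,b_1}(f)\in B_V(G_R)$ and then applies the top horizontal arrow labelled $x$, i.e.\ the sandpile action of $\Pic^0(D_V)$, producing $x\oplus\beta_{b_0,b_1}(f)$. Going across-then-up sends $f$ first by the bottom arrow labelled $\varphi_{V\to E}(x)$, i.e.\ the sandpile action of $\Pic^0(D_E)$, to $\varphi_{V\to E}(x)\oplus f\in B_E(G_R)$, and then applies $\beta_{b_0,b_1}$. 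Commutativity of the diagram therefore yields the identity
\[
\beta_{b_0,b_1}\bigl(\varphi_{V\to E}(x)\oplus f\bigr)=x\oplus\beta_{b_0,b_1}(f).
\]

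Next I would apply $\beta_{b_0,b_1}^{-1}$ to both sides, which is legitimate since $\beta_{b_0,b_1}$ is a bijection by Theorem \ref{thm:Jaeger_trees_represent_hypertrees}. This gives
\[
\varphi_{V\to E}(x)\oplus f=\beta_{b_0,b_1}^{-1}\bigl(x\oplus\beta_{b_0,b_1}(f)\bigr)=x\cdot f,
\]
where the last equality is exactly the definition of the Bernardi action $x\cdot f$ recalled above. This is precisely the asserted formula $x\cdot f=\varphi_{V\to E}(x)\oplus f$, so the corollary follows.

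There is essentially no obstacle here: the corollary is a formal consequence of the theorem, and the only subtlety is bookkeeping, namely keeping straight that the two instances of $\oplus$ live in different groups ($\Pic^0(D_V)$ acting on $B_V(G_R)$ on the top, $\Pic^0(D_E)$ acting on $B_E(G_R)$ on the bottom), with the isomorphism $\varphi_{V\to E}$ mediating between them. I would also remark on why the result deserves to be called \emph{canonical}: the isomorphism $\varphi_{V\to E}$ is base-point free (it is defined purely through white triangle equivalence in $\mathcal{A}_W$), and the sandpile action $\oplus$ on $B_E(G_R)$ is intrinsic to the digraph $D_E$, so the only residual dependence on $(b_0,b_1)$ sits inside $f$ itself rather than in the action. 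Thus the right-hand side $\varphi_{V\to E}(x)\oplus f$ provides the promised base-point-independent description of the Bernardi action, with all the genuine content of the corollary already contained in Theorem \ref{thm:Bernardi_compaticle_w_sandpile_action}.
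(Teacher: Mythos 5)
Your proposal is correct and matches the paper, which offers no separate proof and treats the corollary as an immediate consequence of Theorem \ref{thm:Bernardi_compaticle_w_sandpile_action}; your diagram chase (commutativity gives $\beta_{b_0,b_1}(\varphi_{V\to E}(x)\oplus f)=x\oplus\beta_{b_0,b_1}(f)$, then apply $\beta_{b_0,b_1}^{-1}$ and invoke the definition of the Bernardi action) is exactly the intended unwinding. Your bookkeeping of the two sandpile actions, of $\Pic^0(D_V)$ on $B_V(G_R)$ and of $\Pic^0(D_E)$ on $B_E(G_R)$, is also accurate.
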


As the definition of the sandpile action and the isomorphism $\varphi_{V\to E}$ was canonical, we further obtain:

\begin{cor}
	For balanced plane digraphs, the Bernardi action is independent of the choice of $b_0$ and $b_1$.
\end{cor}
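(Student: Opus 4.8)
The plan is to read off the independence directly from Corollary~\ref{cor:Bernardi_can_def}, with essentially no further work. For each admissible choice of base data $\{b_0, b_1\}$, the Bernardi action was defined as $x \cdot f = \beta^{-1}_{b_0,b_1}(x \oplus \beta_{b_0,b_1}(f))$, and a priori this could depend on $b_0$ and $b_1$ through the bijection $\beta_{b_0,b_1}$ appearing on both ends of the composition. However, Theorem~\ref{thm:Bernardi_compaticle_w_sandpile_action} holds for \emph{any} choice of $\{b_0, b_1\}$, and the commutativity of its diagram is precisely the statement that $x \oplus \beta_{b_0,b_1}(f) = \beta_{b_0,b_1}(\varphi_{V\to E}(x) \oplus f)$; applying $\beta^{-1}_{b_0,b_1}$ collapses this to the base-point-free formula of Corollary~\ref{cor:Bernardi_can_def}, namely $x \cdot f = \varphi_{V\to E}(x) \oplus f$. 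The whole strategy is therefore to observe that the right-hand side of this formula mentions neither $b_0$ nor $b_1$.

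To make this precise I would verify the two ingredients on the right separately. First, the isomorphism $\varphi_{V\to E}\colon \Pic^0(D_V) \to \Pic^0(D_E)$ of Theorem~\ref{thm:isomorphism_between_sandpile_groups} is defined purely through white-triangle equivalence in $\mathcal{A}_W$ (via the condition $(x, \mathbf{0}, \mathbf{0}) \wapr (\mathbf{0}, -y, \mathbf{0})$), with no reference to a base vertex or base edge. Second, the sandpile action $\oplus$ of $\Pic^0(D_E)$ on $B_E(G_R)$ is defined using linear equivalence classes on $D_E$ together with Theorem~\ref{thm:planar_hypertrees_represent}, which furnishes the unique representing hypertree in each class; here too no base data enters. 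Hence for any two choices $\{b_0, b_1\}$ and $\{b_0', b_1'\}$ one obtains $x \cdot f = \varphi_{V\to E}(x) \oplus f$ from both, so the two actions coincide, which is the assertion.

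I expect no genuine obstacle at the level of this corollary: the entire difficulty has already been absorbed into Theorem~\ref{thm:Bernardi_compaticle_w_sandpile_action}, whose proof must establish the commutativity uniformly in the base choice. Once that commutativity is in hand the corollary is a one-line deduction, and the only point requiring care is the explicit confirmation that both $\varphi_{V\to E}$ and $\oplus$ were set up without any reference to $b_0$ or $b_1$—which is immediate from inspecting their definitions.
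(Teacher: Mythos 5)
Your proposal is correct and follows exactly the paper's own route: the paper derives this corollary immediately from Corollary~\ref{cor:Bernardi_can_def} by noting that both the sandpile action $\oplus$ and the isomorphism $\varphi_{V\to E}$ are defined canonically, without reference to $b_0$ or $b_1$. Your additional explicit check that neither ingredient involves the base data is exactly the (implicit) content of the paper's one-line justification.
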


The canonical duality between spanning trees of planar dual graphs generalizes to trinities in the following way (see \cite[Theorem 8.3]{hiperTutte}): 
If $f$ is a hypertree of $G_R$ on $E$, then $f^*=d_{G_R}|_E - \mathbf{1} - f = d_{G_V}|_E - \mathbf{1} - f$ is a hypertree of $G_V$ on $E$ (where $d_{G_R}$ once again means the vector of degrees of $G_R$). These are called planar dual hypertrees. Note that if the trinity is obtained from a planar graph (and so emerald hypertrees of $G_R$ are exactly the characteristic vectors of spanning trees of $G$, and emerald hypertrees of $G_V$ are exactly the characteristic vectors of spanning trees of $G^*$), then two hypertrees are planar dual if and only if they are the characteristic vectors of planar dual spanning trees. Hence planar duality of hypertrees indeed generalizes planar duality of spanning trees.

For the sandpile action of $Pic^0(D_E)$ on $B_E(G_R)$, it is extremely simple to prove compatibility with planar duality.

\begin{thm}\label{thm:compatibility_of_the_natural_action}
	The sandpile action of $Pic^0(D_E)$ on the emerald hypertrees of $G_R$ and the sandpile action of $Pic^0(D_E)$ on the emerald hypertrees of $G_V$ are compatible with planar duality.
	In other words, for any $[x]\in \Pic^0(D_E)$ and $f\in B_E(G_R)$, we have
	$([x]\oplus f)^* = [-x] \oplus f^*$.
\end{thm}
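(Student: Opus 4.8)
The plan is to reduce the identity $([x]\oplus f)^* = [-x]\oplus f^*$ to a single linear-equivalence computation in $D_E$, exploiting the fact that planar duality of hypertrees is an affine involution and that the action is transported through it by negation.

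First I would record the two ingredients. By the color-symmetric version of Theorem \ref{thm:planar_hypertrees_represent} (the roles of $V$, $E$, $R$ being interchangeable), the hypertrees of $G_R$ on $E$ form a system of representatives of $\Pic^{|V|-1}(D_E)$, while the hypertrees of $G_V$ on $E$ form a system of representatives of $\Pic^{|R|-1}(D_E)$; both are cosets on which the single group $\Pic^0(D_E)$ acts by the sandpile action, with $[x]\oplus f$ defined as the unique hypertree in the relevant set that is linearly equivalent to $x+f$ in $D_E$. Second, writing $D := d_{G_R}|_E = d_{G_V}|_E$ (the two emerald degree vectors agree because red and violet edges alternate around each emerald point), planar duality is the affine involution $h \mapsto h^* = D - \mathbf{1} - h$, which carries $B_E(G_R)$ onto $B_E(G_V)$ by \cite[Theorem 8.3]{hiperTutte}.

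The core step is then immediate. Set $g := [x]\oplus f$, so that $g \in B_E(G_R)$ and $g \sim x + f$ in $D_E$. Since $g^* = D - \mathbf{1} - g$ already lies in $B_E(G_V)$, it suffices to identify it as $[-x]\oplus f^*$, i.e. to show $g^* \sim -x + f^*$ in $D_E$; uniqueness of the representative in $B_E(G_V)$ then forces equality. Computing, $g^* - (-x + f^*) = (D - \mathbf{1} - g) + x - (D - \mathbf{1} - f) = x + f - g$, and this vector lies in $L_{D_E}\mathbb{Z}^E$ precisely because $g \sim x + f$. This completes the argument.

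The computation itself is routine; the only points requiring care are bookkeeping ones. I would make sure that both $[x]\oplus f$ and $[-x]\oplus f^*$ are read as sandpile actions of the same group $\Pic^0(D_E)$ but on two different cosets (degrees $|V|-1$ and $|R|-1$), and that the appeal to the color-symmetric form of Theorem \ref{thm:planar_hypertrees_represent} is legitimate. The sign flip $x \mapsto -x$ is the one genuinely structural feature: it arises because $h \mapsto h^*$ reverses the affine direction, so $g^* - f^* = f - g = -(g-f) \sim -x$, meaning the action transported through duality is the one twisted by negation. No deeper obstacle is expected.
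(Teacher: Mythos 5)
Your proof is correct and is essentially the paper's own argument: both reduce the claim to the one-line computation $([x]\oplus f)^* - f^* = f - ([x]\oplus f) \sim -x$ and then invoke uniqueness of the hypertree representative in its linear equivalence class. The extra bookkeeping you supply (the two cosets of degrees $|V|-1$ and $|R|-1$, and $d_{G_R}|_E = d_{G_V}|_E$) is accurate but matches what the paper sets up before the proof.
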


\begin{proof}
	Let $[x]\in Pic^0(D_E)$ and $f\in B_E(G_R)$ be arbitrary and put $x \oplus f= h$. Then $x\sim h-f$. It is enough to show that $-x\sim h^*-f^*$, but this is easy to see: $h^*-f^*=(d_{G_R}|_E - h)-(d_{G_R}|_E - f) = f - h  \sim -x$.
\end{proof}

The compatibility of the Bernardi action with planar duality is an immediate corollary.

\begin{cor}\label{cor:compatibility}
	The Bernardi action for balanced plane digraphs is compatible with planar duality.
	In other words, for any $[x]\in Pic^0(D_V)$ and $f\in B_E(G_R)$, we have $([x] \cdot f)^* = \varphi_{V\to R}([x]) \cdot f^*$.
\end{cor}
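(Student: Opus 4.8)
The plan is to rewrite both sides of the asserted identity $([x]\cdot f)^* = \varphi_{V\to R}([x])\cdot f^*$ using the canonical description of the Bernardi action from Corollary \ref{cor:Bernardi_can_def}, and then to reduce the whole statement to two previously established facts: the compatibility of the \emph{sandpile} action with planar duality (Theorem \ref{thm:compatibility_of_the_natural_action}) and a single composition identity relating the three isomorphisms $\varphi_{V\to E}$, $\varphi_{V\to R}$, $\varphi_{R\to E}$. Concretely, I would first apply Corollary \ref{cor:Bernardi_can_def} to the left side to get $[x]\cdot f = \varphi_{V\to E}([x])\oplus f$, where $\oplus$ is the sandpile action of $\Pic^0(D_E)$ on $B_E(G_R)$. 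Invoking the complete color symmetry of a trinity, the same corollary applied to the digraph $D_R$ acting on $B_E(G_V)$ gives $\varphi_{V\to R}([x])\cdot f^* = \varphi_{R\to E}(\varphi_{V\to R}([x]))\oplus f^*$, where now $\oplus$ denotes the sandpile action of $\Pic^0(D_E)$ on $B_E(G_V)$.

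The key new ingredient is the composition identity $\varphi_{R\to E}\circ\varphi_{V\to R} = -\,\varphi_{V\to E}$, which I would prove directly from the defining white-triangle equivalences. Writing elements of $\mathcal{A}$ as triples $(x_V,x_E,x_R)$, the definitions read: $\varphi_{V\to R}([x])=[z]$ iff $(x,\mathbf{0},\mathbf{0})\wapr(\mathbf{0},\mathbf{0},-z)$, and $\varphi_{R\to E}([z])=[w]$ iff $(\mathbf{0},\mathbf{0},z)\wapr(\mathbf{0},-w,\mathbf{0})$, equivalently $(\mathbf{0},\mathbf{0},-z)\wapr(\mathbf{0},w,\mathbf{0})$. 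Chaining these through transitivity of $\wapr$ yields $(x,\mathbf{0},\mathbf{0})\wapr(\mathbf{0},w,\mathbf{0})$; comparing with $\varphi_{V\to E}([x])=[y]$, defined by $(x,\mathbf{0},\mathbf{0})\wapr(\mathbf{0},-y,\mathbf{0})$, gives $(\mathbf{0},w,\mathbf{0})\wapr(\mathbf{0},-y,\mathbf{0})$, whence $w\sim -y$ in $D_E$ by Lemma \ref{lem:haromszog_ekv=>lin_ekv}. Thus $[w]=-[y]$, i.e. $\varphi_{R\to E}(\varphi_{V\to R}([x]))=-\varphi_{V\to E}([x])$.

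With these pieces in hand, the conclusion follows by a short chain of equalities:
\begin{align*}
([x]\cdot f)^* &= \bigl(\varphi_{V\to E}([x])\oplus f\bigr)^* = \bigl[-\varphi_{V\to E}([x])\bigr]\oplus f^*\\
&= \varphi_{R\to E}\bigl(\varphi_{V\to R}([x])\bigr)\oplus f^* = \varphi_{V\to R}([x])\cdot f^*,
\end{align*}
where the second equality is Theorem \ref{thm:compatibility_of_the_natural_action} (applied with $\varphi_{V\to E}([x])\in\Pic^0(D_E)$), the third is the composition identity above, and the outer two are the canonical descriptions of the Bernardi action. I expect the composition identity to be the only genuine obstacle; everything else is a formal substitution, and the obstacle itself is mild, amounting to careful sign bookkeeping across the three isomorphisms together with an appeal to the symmetry of the trinity to license the canonical form of the Bernardi action on the dual side.
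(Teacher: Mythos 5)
Your proof is correct and follows essentially the same route as the paper: both sides are rewritten via Corollary \ref{cor:Bernardi_can_def}, the compatibility is reduced to Theorem \ref{thm:compatibility_of_the_natural_action}, and the key composition identity $\varphi_{R\to E}\circ\varphi_{V\to R}=-\varphi_{V\to E}$ is exactly what the paper states (phrased there as $\varphi_{R\to E}\circ\varphi_{V\to R}=\psi_{V\to E}=-\varphi_{V\to E}$). Your explicit unwinding of the white-triangle equivalences to verify that identity is just a more detailed version of what the paper dismisses as ``clear from the definition.''
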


\begin{proof}
	By Corollary \ref{cor:Bernardi_can_def}, 
	$$[x] \cdot f=\varphi_{V\to E}([x]) \oplus f,$$ 
	and 
	$$\varphi_{V\to R}([x]) \cdot f^*=\varphi_{R\to E}(\varphi_{V\to R}([x])) \oplus f^*.$$ 
	Since it is clear from the definition that $\varphi_{R\to E}\circ \varphi_{V\to R}= \psi_{V\to E}$, we see that $\varphi_{V\to R}([x]) \cdot f^*=\psi_{V\to E}([x]) \oplus f^*$. Now the statement follows from Theorem \ref{thm:compatibility_of_the_natural_action} and the fact that $-\varphi_{V\to E}([x])=\psi_{V\to E}([x])$.
\end{proof}

\begin{figure}[ht]
	\begin{center}
		\begin{tikzpicture}[-,>=stealth',auto,scale=0.4,
		thick]
		\draw[light-gray,fill=light-gray,rounded corners=10pt] (0, 6.2) -- (2.8, 6) -- (5, 3.6) -- (5, 7.5) -- (-1.5, 7.5) -- (-3, 4.7) -- cycle;
		\draw[light-gray,fill=light-gray,rounded corners=10pt] (0, 0.8) -- (2.8, 0.5) -- (5, 3.6) -- (5, -1) -- (-1.5, -1) -- (-3, 2.3) -- cycle;
		\draw[fill=light-gray] (-3, 4.7) -- (-1.5, 4.2) -- (-3, 2.3) -- cycle;
		\draw[fill=light-gray] (0, 3.6) -- (-1.5, 4.2) -- (0, 6.2) -- cycle;
		\draw[fill=light-gray] (0, 3.6) -- (-3, 2.3) -- (0,2.2) -- cycle;
		\draw[fill=light-gray] (0, 0.8) -- (3, 2.3) -- (0,2.2) -- cycle;
		\draw[fill=light-gray] (0, 3.6) -- (3, 2.3) -- (1.5, 4.2) -- cycle;
		\draw[fill=light-gray] (3, 4.7) -- (0, 6.2) -- (1.5, 4.2) -- cycle;
		\draw[fill=light-gray] (3, 4.7) -- (3, 2.3) -- (5,3.6) -- cycle;
		\tikzstyle{e}=[{circle,draw,fill,color=e}]
		\tikzstyle{v}=[{circle,draw,fill,color=v}]
		\tikzstyle{r}=[{circle,draw,fill,color=r}]
		\node[e] (0) at (0, 0.8) {};
		\node at (-0.4, 0.3) {$b_1$};
		\node[v] (1) at (3, 2.3) {};
		\node[v,label=left:$b_0$] (2) at (-3, 2.3) {};
		\node[e] (3) at (3, 4.7) {};
		\node[e] (4) at (-3, 4.7) {};
		\node[v] (5) at (0, 6.2) {};
		\node[e] (6) at (0, 3.6) {};
		\node[r,label=right: \ $s_0$] (7) at (5, 3.6) {};
		\node[r] (8) at (1.5, 4.2) {};
		\node[r] (9) at (-1.5, 4.2) {};
		\node[r] (10) at (0, 2.2) {};
		\node at (-0.7, 2.6) {$s_1$};
		\draw[-, line width=0.4mm, color=e] (8) to (1);
		\draw[-, line width=0.4mm, color=v] (8) to (3);
		\draw[-, line width=0.4mm, color=e] (8) to (5);
		\draw[-, line width=0.4mm, color=v] (8) to (6);
		\draw[-, line width=0.4mm, color=v] (9) to (6);
		\draw[-, line width=0.4mm, color=e] (9) to (5);
		\draw[-, line width=0.4mm, color=v] (9) to (4);
		\draw[-, line width=0.4mm, color=e] (9) to (2);
		\draw[-, line width=0.4mm, color=v] (10) to (6);
		\draw[-, line width=0.4mm, color=e] (10) to (2);
		\draw[-, line width=0.4mm, color=v] (10) to (0);
		\draw[-, line width=0.4mm, color=e] (10) to (1);
		\draw[-, line width=0.4mm, color=e] (7) to (1);
		\draw[-, line width=0.4mm, color=v] (3) to (7);
		\draw[-, line width=0.4mm, color=e,rounded corners=10pt] (7) -- (2.8, 6) -- (5);
		\draw[-, line width=0.4mm, color=v, rounded corners=10pt] (4) -- (-1.5, 7.5) -- (5, 7.5) -- (7);
		\draw[-, line width=0.4mm, color=v,rounded corners=10pt] (7) -- (2.8, 0.5) -- (0);
		\draw[-, line width=0.4mm, color=e, rounded corners=10pt] (2) -- (-1.5, -1) -- (5, -1) -- (7);
		\draw[-, dashed, line width=0.4mm, rounded corners=5pt] (10) -- (-1.6, 0.5) -- (-1, -0.5) -- (3.5, -0.5) -- (7);
		\path[every node/.style={font=\sffamily\small},color=red,line width=1mm]
		(0) edge node {} (2)
		(1) edge node {} (0)
		(2) edge node {} (4)
		(5) edge node {} (3)
		(6) edge node {} (2)
		(5) edge node {} (6);
		\path[every node/.style={font=\sffamily\small},color=red,line width=0.4mm]
		(4) edge node {} (5)
		(1) edge node {} (3)
		(1) edge node {} (6);
		\draw[-,line width=0.8mm,rounded corners=10pt] (7) -- (3.2, 6.9) -- (0, 6.9)  -|  (9);
		\draw[-,line width=0.8mm] (7) to[bend left=30] (8);
		\draw[-,line width=0.8mm] (8) to[bend left=30] (10);
		\end{tikzpicture}
	\end{center}
	\caption{A spanning tree of $G_R$ (thick red lines) and the corresponding spanning tree of $G_R^*$ (thick black lines).}
	\label{f:G_R^*}
\end{figure}
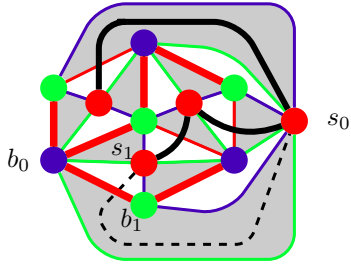

Now we prepare to prove Theorem \ref{thm:Bernardi_compaticle_w_sandpile_action}. First, we need to examine the relationship of the tour of a spanning tree and duality.  This was also examined in \cite[Section 8.2]{Bernardi_first}, but we repeat it in our language since we need special corollaries for bipartite graphs. Take the dual $G_R^*$ of $G_R$. Note that $G_R^*$ is the undirected graph we get by forgetting the orientation of the edges of $D_R$.
$G_R^*$ is also embedded in the sphere. Let us take the ribbon structure on it coming from the negative orientation of the sphere. For a spanning tree $T$ of $G_R$, let $T^*$ be the spanning tree of $G_R^*$ that consists of the edges not contained in $T$. First, suppose that $b_0$ is violet and $b_1$ is emerald. Then let $s_0s_1$ be the dual edge of $b_0b_1$ such that $s_0b_0b_1$ is a black triangle. (See Figure \ref{f:G_R^*}, where the spanning tree $T$ is drawn by thick red lines, $T^*$ is drawn by thick black lines, and $s_0s_1$ is drawn by dashed line. Note that we did not draw all the edges of $G_R^*$.) If $b_0$ is emerald and $b_1$ is violet, then let $s_0s_1$ be the dual edge of $b_0b_1$ such that $s_0b_0b_1$ is a white triangle. We claim that the tour of $T$ in $G_R$ with starting node $b_0$ and starting edge $b_0b_1$ using the positive orientation is ``the same'' as the tour of $T^*$ in $G_R^*$ with starting node $s_0$ and starting edge $s_0s_1$ using the negative orientation. By ``the same'', we understand that at any moment, if the current node in $G_R$ is $b$ and the current edge is $bb'$, then the current node in $G_R^*$ is $s$ and the current edge is $ss'$ where $ss'$ is the dual edge of $bb'$ where we get $ss'$ by turning $bb'$ in the positive direction. In other words, $ss'$ connects the red nodes of the two triangles sharing the edge $bb'$, and the triangle $sbb'$ is black if and only if $b$ is violet.
To see this, note that it is true at the beginning, and it stays true after a step of the tours.

Cutting through an edge of $G_R$ corresponds to traversing the corresponding edge of $G_R^*$ and vice versa. More precisely, cutting through an edge at the violet endpoint corresponds to traversing the corresponding edge of $G_R^*$ compatibly with the orientation in $D_R$ and cutting through an edge at the emerald endpoint corresponds to traversing the corresponding edge of $G_R^*$ opposite to the orientation in $D_R$. If $T$ is a $V$-cut Jaeger tree, then the edges of $G_R$ are always cut through at their violet endpoint. Hence in the tour of $T^*$, each edge is traversed first in the black-to-white direction, i.e., compatibly with the orientation in $D_R$. And vice versa, if in the tour of $T^*$, each edge is traversed first compatibly with the orientation in $D_R$, then in the tour of $T$, each edge is cut through at its violet endpoint.
This implies the following property (which was also pointed out in \cite{Hyper_Bernardi}).

\begin{prop}\label{prop:Jaeger--arborescence}
	Let $T$ be a spanning tree of $G_R$ in a trinity, and let $s_0$ be chosen such that if $b_0$ is violet, then $b_0b_1s_0$ is a black triangle, and if $b_0$ is emerald, then $b_0b_1s_0$ is a white triangle. Then
	$T$ is a $V$-cut Jaeger tree of $G_R$ with base point $b_0$ and base edge $b_0b_1$ if and only if $T^*$ is an arborescence of $D_R$ rooted at $s_0$. \hfill\qedsymbol
\end{prop}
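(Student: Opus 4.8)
The plan is to reduce the whole statement to one structural property of the Bernardi tour and then feed it through the dual-tour dictionary already assembled in the paragraphs preceding the proposition. Recall what has just been established: the root $s_0$ is chosen exactly so that the tour of $T^*$ in $G_R^*$ (using the negative orientation and base edge $s_0s_1$) mirrors the tour of $T$ in $G_R$ step by step, and so that cutting through an edge of $G_R$ at its violet endpoint corresponds to traversing the dual edge of $T^*$ compatibly with its orientation in $D_R$. Consequently $T$ is a $V$-cut Jaeger tree with base $(b_0,b_1)$ if and only if, in the tour of $T^*$, every edge of $T^*$ is traversed \emph{for the first time} in the direction agreeing with its $D_R$-orientation. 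So it only remains to identify this last condition with the assertion that $T^*$ is an arborescence rooted at $s_0$.

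The key step, which I would prove by a short connectedness argument, is the following general property of the Bernardi tour of a spanning tree: in the tour of $T^*$ starting at $s_0$, the first traversal of every tree edge is directed away from $s_0$, i.e. from the parent to the child in $T^*$ rooted at $s_0$. To see this, fix a vertex $w\neq s_0$ and look at the first moment it becomes the current node. The current node only changes when a tree edge is traversed, so $w$ is reached along a tree edge $\{u,w\}$ from $u$ to $w$, with $u$ current just before. The vertices that have been current up to this moment, together with the tree edges traversed so far, form a connected subtree $S$ of $T^*$ containing $s_0$ and $u$ but not $w$. If $w$ were the parent of $u$, then $w$ would lie on the $s_0$–$u$ path in $T^*$, whose endpoints both lie in $S$; since $S$ is a subtree it would contain that whole path, forcing $w\in S$, a contradiction. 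As $\{u,w\}$ is a tree edge, the remaining possibility is that $u$ is the parent of $w$, so the first traversal of $\{u,w\}$, which occurs now, runs from parent to child, i.e. away from $s_0$.

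Granting this, the condition ``every tree edge of $T^*$ is first traversed compatibly with its $D_R$-orientation'' becomes ``every tree edge of $T^*$ is oriented in $D_R$ away from $s_0$'', which says precisely that every vertex is reachable from $s_0$ along a directed path inside $T^*$; that is, $T^*$ is an arborescence rooted at $s_0$. Chaining this equivalence with the reformulation of the $V$-cut Jaeger condition from the first step yields the proposition. I expect the only genuine content to be the parent-to-child property of the first traversal; everything else is bookkeeping with the correspondence already in place, so the main obstacle is simply to state the connectedness argument cleanly enough that the identification of $u$ as the parent of $w$ is airtight.
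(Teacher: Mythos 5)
Your argument is correct and takes essentially the same route as the paper, which derives the proposition from the dual-tour dictionary in the preceding paragraphs and leaves implicit exactly the step you isolate, namely that ``every tree edge of $T^*$ is first traversed compatibly with its $D_R$-orientation'' is equivalent to $T^*$ being an out-arborescence rooted at $s_0$. Your parent-to-child first-traversal lemma and its connectedness proof are sound (including the case $u=s_0$), so you have simply made explicit the detail the paper omits.
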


\begin{remark}
	Proposition \ref{prop:Jaeger--arborescence} gives an alternative explanation for the observation of Yuen \cite[Section 5.1]{Yuen17} that the Bernardi bijection for a planar graph depends only on the face to the right of the starting edge (in our language: on $s_0$ where $b_0b_1s_0$ is black and $b_0$ is violet). Indeed, by Proposition \ref{prop:Jaeger--arborescence} for a given $s_0$ we get the same Jaeger trees independent of the actual $b_0$ and $b_1$, hence the Bernardi bijection is also the same. 
\end{remark}

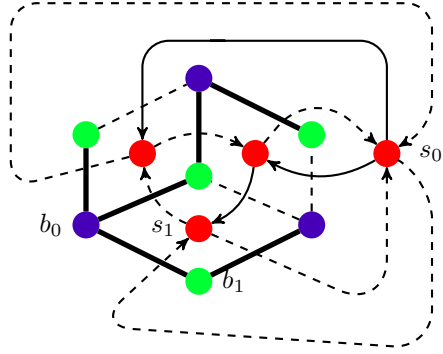
\begin{figure}
\begin{center}
    \begin{tikzpicture}[-,>=stealth',auto,scale=0.5,
	thick]
	\tikzstyle{e}=[{circle,draw,fill,color=e,font=\sffamily\small}]
	\tikzstyle{v}=[{circle,draw,fill,color=v,font=\sffamily\small}]
	\tikzstyle{r}=[{circle,draw,fill,color=r,font=\sffamily\small}]
	\node[e,label=right:$b_1$] (0) at (0, 0.8) {};
	\node[v] (1) at (3, 2.3) {};
	\node[v,label=left:$b_0$] (2) at (-3, 2.3) {};
	\node[e] (3) at (3, 4.7) {};
	\node[e] (4) at (-3, 4.7) {};
	\node[v] (5) at (0, 6.2) {};
	\node[e] (6) at (0, 3.6) {};
    \node[r,label=right: \ $s_0$] (7) at (5, 4.2) {};
    \node[r] (8) at (1.5, 4.2) {};
    \node[r] (9) at (-1.5, 4.2) {};
    \node[r, label=left:$s_1$] (10) at (0, 2.2) {};
    \draw[->,rounded corners=10pt] (7) |- (1, 7.2) -- (0, 7.2)  -|  (9);
    \draw[->] (7) to[bend left=30] (8);
    \draw[->] (8) to[bend left=30] (10);
    \draw[->, dashed, rounded corners=20pt] (8) -- (3, 6) --  (7);
    \draw[->, dashed] (10) to[bend left=30] (9);
    \draw[->, dashed] (9) to[bend left=30] (8);
    \draw[->, dashed, rounded corners=10pt] (9) -- (-5, 3.3) |- (0, 8.2) -| (6.2, 4.9) -- (7);
    \draw[->, dashed, rounded corners=20pt] (10) -- (5, 0) --  (7);
    \draw[->, dashed, rounded corners=15pt] (7) -- (6.2, 3.5) -- (6.2, -1) -- (-2.8, -0.4) --  (10);
	\path[every node/.style={font=\sffamily\small},dashed]
	(4) edge node {} (5)
    (1) edge node {} (3)
    (1) edge node {} (6);
	\path[every node/.style={font=\sffamily\small}, line width=0.7mm]
	(0) edge node {} (2)
    (1) edge node {} (0)
    (2) edge node {} (4)
	(5) edge node {} (3)
    (6) edge node {} (2)
	(5) edge node {} (6);
	\end{tikzpicture}
\end{center}
\caption{An illustration for Proposition \ref{prop:Jaeger--arborescence}.}
\label{f:Jaeger_tours_and_duality}
\end{figure}
\begin{proof}[Proof of Theorem \ref{thm:Bernardi_compaticle_w_sandpile_action}]
Let us take two arbitrary hypertrees $h$ and $h'$ of $G_R$ on $V$.
The statement of the theorem is equivalent to the fact that $\varphi_{V\to E}([h'-h])=[\beta_{b_0,b_1}^{-1}(h')-\beta_{b_0,b_1}^{-1}(h)]$.
By Theorem \ref{thm:Jaeger_trees_represent_hypertrees}, there exist $V$-cut Jaeger trees $T$ and $T'$ (with the fixed starting vertex and edge) such that $h=f_V(T)$ and $h'=f_V(T')$. Again by Theorem \ref{thm:Jaeger_trees_represent_hypertrees}, in this case their preimages at the Bernardi-bijection are $\beta^{-1}_{b_0,b_1}(h)=f_E(T)$ and $\beta^{-1}_{b_0,b_1}(h')=f_E(T')$. Then we have to show that for $f_V(T')-f_V(T)$ taken as a chip configuration on $D_V$, it follows that $$\varphi_{V\to E}([f_V(T')-f_V(T)])= [f_E(T')-f_E(T)],$$ that is, $(f_V(T')-f_V(T),0,0)\wapr (0,f_E(T)-f_E(T'),0)$ or equivalently, $$(f_V(T')-f_V(T),f_E(T')-f_E(T),0)\wapr (0,0,0).$$

Let us suppose that $T'=T-\{v_1e_1, \dots , v_ke_k\} + \{v'_1e'_1, \dots, v'_ke'_k\}$. Then $(f_V(T')-f_V(T),f_E(T')-f_E(T),0) = (\mathbf{1}_{v'_1}+ \dots + \mathbf{1}_{v'_k}-(\mathbf{1}_{v_1}+ \dots + \mathbf{1}_{v_k}),\mathbf{1}_{e'_1}+ \dots + \mathbf{1}_{e'_k} - (\mathbf{1}_{e_1}+ \dots + \mathbf{1}_{e_k}),0)$.

Let us examine the arborescences $A$ and $A'$ dual to $T$ and $T'$, respectively. That is, $A$ consists of the edges of $D_R$ such that the corresponding edge of $G_R$ is not in $T$, and similarly for $A'$ and $T'$. Let us denote the directed edge corresponding to an edge $\varepsilon$ of $G_R$ by $\varepsilon^*$. We get $A'$ by adding $(v_ie_i)^*$ to $A$ for $i=1,\dots k$, then removing $(v'_ie'_i)^*$ again for every $i=1,\dots k$. Let $r_i$ be the red node that is the head of $(v_ie_i)^*$. In an arborescence of $D_R$, every red node except for $r_0$ has indegree 1, and $r_0$ has indegree 0. This implies that for each $i=1,\dots k$ we have $r_i\neq r_0$, since $r_i$ has indegree at least one in $A'$. It also follows that $r_1, \dots r_k$ are all different nodes, otherwise $A'$ would have a node with indegree larger than one. As $r_i\neq r_0$ for each $i$, we see that $r_i$ also has indegree 1 in $A$. As $(v_ie_i)^*\notin A$ (since $v_ie_i\in T$), this means that by adding $(v_ie_i)^*$ to $A$ for every $i$, the indegree of $r_i$ increases to $2$. Hence for some $j$ the edge $(v'_je'_j)^*$ also has $r_i$ as its head. By relabeling, we can suppose that for each $i$, the edges $(v_ie_i)^*$ and $(v'_ie'_i)^*$ both have head $r_i$. This means that $r_iv_ie_i$ and $r_iv'_ie'_i$ are both white triangles of the trinity for each $i$. Now for each $i$, we can take $r_iv'_ie'_i$ with coefficient 1 and $r_iv_ie_i$ with coefficient $-1$ and obtain $(\mathbf{1}_{v'_1}+ \dots + \mathbf{1}_{v'_k}-(\mathbf{1}_{v_1}+ \dots + \mathbf{1}_{v_k}),\mathbf{1}_{e'_1}+ \dots + \mathbf{1}_{e'_k} - (\mathbf{1}_{e_1}+ \dots + \mathbf{1}_{e_k}),0)$
 as an integer linear combination of white triangles, finishing the proof.

\begin{center}
	\begin{tikzpicture}[-,>=stealth',auto,scale=0.5,
	thick]
    \draw[fill=light-gray] (4,0) -- (6,0) -- (3,1) -- cycle;
	\draw[fill=light-gray] (0,0) -- (2,0) -- (3,1) -- cycle;
	\tikzstyle{r}=[color=r,circle,fill,draw,font=\sffamily\small]
	\tikzstyle{v}=[color=v,circle,fill,draw]
	\tikzstyle{e}=[color=e,circle,fill,draw]
	\tikzstyle{u}=[circle]
	\node[v,label=left:{$v_i$}] (02) at (0, 2) {};
	\node[e,label=right:{$e'_i$}] (62) at (6, 2) {};
	\node[e,label=left:{$e_i$}] (00) at (0, 0) {};
	\node[v] (20) at (2, 0) {};
	\node[e] (40) at (4, 0) {};
	\node[v,label=right:{$v'_i$}] (60) at (6, 0) {};
	\node[r, label=above:$r_i$] (11) at (3, 1) {};
	\node[u] at (1,1) {$-1$};
	\node[u] at (5,1) {$+1$};
	\path[every node/.style={font=\sffamily\small},color=r,line width=0.4mm]
	(00) edge node {} (20)
	(20) edge node {} (40)
	(40) edge node {} (60)
	(60) edge node {} (62)
	(02) edge node {} (00);
	\path[every node/.style={font=\sffamily\small},color=v,line width=0.4mm]
	(00) edge node {} (11)
	(40) edge node {} (11)
	(62) edge node {} (11);
	\path[every node/.style={font=\sffamily\small},color=e,line width=0.4mm]
	(02) edge node {} (11)
	(60) edge node {} (11)
	(20) edge node {} (11);
	\end{tikzpicture}
\end{center}

\end{proof}

\section{Proof of Theorem \ref{thm:planar_hypertrees_represent}.}\label{sec:representation}

We will need a characterization of hypertrees from \cite{hiperTutte}. For a set $S\subseteq V$, let us denote by $\Gamma_{G_R}(S)$ the set of nodes from $E$ that are connected to any node of $S$ by an edge of $G_R$.

\begin{thm}\cite[Theorem 3.4]{hiperTutte} \label{thm:hypertree_char}
$f$ is a hypertree of $G_R$ on $V$ if and only if
\begin{enumerate}
\item[(i)] $f(S) \leq |\Gamma_{G_R}(S)|-1$ for any $S\subseteq V$,
\item[(ii)] $f(V) = |E|-1$.
\end{enumerate} \hfill\qedsymbol
\end{thm}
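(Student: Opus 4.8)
The plan is to prove both implications: necessity is a short counting argument, and sufficiency is an induction on $|V|$ driven by a dichotomy on the existence of a ``tight'' set.

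\textbf{Necessity.} Suppose $f=f_V(T)$ for a spanning tree $T$ of $G_R$, so $\deg_T(v)=f(v)+1$ for $v\in V$. For $S\subseteq V$ let $F_S$ be the set of edges of $T$ incident to $S$. Since $f$ is additive, $|F_S|=\sum_{v\in S}(f(v)+1)=f(S)+|S|$. Every edge of $F_S$ has its emerald endpoint in $\Gamma_{G_R}(S)$, so $F_S$ is a subforest of $T$ living on the vertex set $S\cup\Gamma_{G_R}(S)$; as a forest on $m$ vertices has at most $m-1$ edges, $f(S)+|S|\le |S|+|\Gamma_{G_R}(S)|-1$, which is (i). Taking $S=V$ gives $F_V=E(T)$ with $|E(T)|=|V|+|E|-1$ and $\Gamma_{G_R}(V)=E$, hence $f(V)=|E|-1$, which is (ii).

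\textbf{Sufficiency, setup and Case A.} I induct on $|V|$. Note that (i) with $S=\{v\}$ gives $f(v)\le\deg_{G_R}(v)-1$. In the base case $|V|=1$, the graph $G_R$ is a star centered at the unique violet vertex, and (ii) forces $f$ to be the degree sequence of that star. For $|V|\ge 2$, call a nonempty $S\subsetneq V$ \emph{tight} if $f(S)=|\Gamma_{G_R}(S)|-1$. If a tight set $S_0$ exists I decompose the problem. On $G_1:=G_R[S_0\cup\Gamma_{G_R}(S_0)]$ the restriction $f|_{S_0}$ satisfies (i) (neighborhoods of subsets of $S_0$ are unchanged) and (ii) (by tightness), and $G_1$ is connected (otherwise summing (i) over the violet parts of its components contradicts tightness); by induction $f|_{S_0}$ is realized by a spanning tree $T_1$ of $G_1$. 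Contracting $S_0\cup\Gamma_{G_R}(S_0)$ to a single vertex $w$, every boundary edge has its emerald end inside $\Gamma_{G_R}(S_0)$, so $w$ plays the role of an emerald vertex in the quotient $G_2$ and $f|_{V\setminus S_0}$ is a candidate hypertree on $V\setminus S_0$. Using modularity of $f$ and tightness of $S_0$ (so that $f(S\cup S_0)=f(S)+f(S_0)$ and $\Gamma_{G_R}(S\cup S_0)=\Gamma_{G_R}(S)\cup\Gamma_{G_R}(S_0)$), conditions (i) and (ii) transfer to $G_2$, giving by induction a spanning tree $T_2$ of $G_2$. Un-contracting $w$ and gluing $T_1$ to $T_2$ yields a subgraph with exactly $|V|+|E|-1$ edges, the correct violet degrees, and (since $T_1,T_2$ are connected and meet only along $\Gamma_{G_R}(S_0)$) connectivity, hence a spanning tree realizing $f$.

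\textbf{Sufficiency, Case B.} If no proper tight set exists, then $f(S)\le|\Gamma_{G_R}(S)|-2$ for every nonempty $S\subsetneq V$. In particular every violet degree is $\ge 2$, $|E|\ge 2$, and since $f(V)=|E|-1\ge 1$ there is a violet vertex $v$ with $f(v)\ge 1$. I choose an emerald neighbor $e$ of such a $v$ that is not a cut vertex of $G_R$, delete $e$, and decrement $f(v)$ by one. The strict slack is exactly what is required: for $S\not\ni v$ meeting $e$, the inequality $f(S)\le|\Gamma_{G_R}(S)|-2$ becomes (i) after $|\Gamma_{G_R}(S)|$ drops by one; for $S\ni v$ the ordinary (i) suffices after $f(v)$ drops by one; and (ii) is preserved since removing one emerald together with one unit of $f$ keeps $f(V)=|E|-1$ in the smaller graph. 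By induction a spanning tree $T'$ of $G_R-e$ realizes the reduced vector, and re-attaching $e$ as a leaf at $v$ produces the desired tree.

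\textbf{Main obstacle.} I expect the delicate point to be Case B, namely guaranteeing an emerald neighbor $e$ of a vertex with $f(v)\ge 1$ whose deletion keeps $G_R$ connected. The plan is to extract this from the block--cut structure of $G_R$ together with the strict-slack hypothesis, which already forces all violet degrees $\ge 2$ and rules out the thin, path-like configurations in which every emerald is a cut vertex. A fallback is to bypass the explicit reduction in Case B altogether and prove sufficiency by a matroid-intersection feasibility check: realize the prescribed violet degrees as a common independent set of the graphic matroid of $G_R$ and the partition matroid with capacities $f(v)+1$ on the violet endpoints, where (i)--(ii) should supply the governing rank inequality. Verifying that inequality would then be the analogous crux along this alternative route.
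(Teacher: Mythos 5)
The paper does not prove this statement; it is quoted from \cite{hiperTutte} (the $\blacksquare$ marks it as imported), so there is no in-paper argument to compare against and your proof has to stand on its own. Most of it does. The necessity count is correct. In Case A, the transfer of the conditions to $G_1$ and $G_2$ is right: for $S\subseteq V\setminus S_0$, applying (i) to $S\cup S_0$ and subtracting the tightness equality gives $f(S)\le|\Gamma_{G_R}(S)\setminus\Gamma_{G_R}(S_0)|$, which is exactly $|\Gamma_{G_2}(S)|-1$ whether or not $S$ sees $\Gamma_{G_R}(S_0)$; your connectivity argument for $G_1$ (summing (i) over the violet parts of components, whose emerald parts partition $\Gamma_{G_R}(S_0)$) is sound, and the edge count plus connectivity of the glued tree checks out since $T_1$ and the lift of $T_2$ share no edges. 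Two genuine issues remain, both in Case B. First, your induction is on $|V|$, but the Case B reduction deletes an emerald vertex and leaves $|V|$ unchanged, so the recursion does not terminate as stated. Replace the measure by $|V|+|E|$: Case A strictly decreases it for both $G_1$ and $G_2$ (because $|S_0|+|\Gamma_{G_R}(S_0)|\ge 2$), and Case B decreases $|E|$.

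Second, the ``main obstacle'' you flag is a real gap as written, but it closes, and in a stronger form than you ask for: under the strict-slack hypothesis \emph{no} emerald vertex is a cut vertex of $G_R$ at all, so any emerald neighbor of your chosen $v$ works. Indeed, suppose $e\in E$ were a cut vertex and let $C_1,\dots,C_k$ ($k\ge 2$) be the components of $G_R-e$. Every emerald vertex has a violet neighbor, so each $C_i$ contains a violet vertex; set $V_i=V\cap C_i$ (nonempty, proper) and let $E_i$ be the emerald vertices of $C_i$. Then $\Gamma_{G_R}(V_i)\subseteq E_i\cup\{e\}$, so the strict inequalities give $f(V_i)\le|\Gamma_{G_R}(V_i)|-2\le|E_i|-1$, and summing over $i$ yields $|E|-1=f(V)\le\bigl(\sum_i|E_i|\bigr)-k=(|E|-1)-k$, a contradiction. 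This vindicates your block--cut instinct and makes the choice of $e$ trivial. One piece of housekeeping: the contraction in Case A can create parallel violet--emerald edges, which is harmless since (i)--(ii) only count vertices and parallel edges never both lie in a tree, but you should state that the induction runs over bipartite multigraphs (or pass to the underlying simple graph). Your matroid-intersection fallback is also viable, but the rank-inequality verification there is essentially the same tight-set bookkeeping, so the direct induction, repaired as above, is the cleaner route.
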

We note that \cite[Theorem 3.4]{hiperTutte} also includes the condition that $f(v)\geq 0$ for each $v\in V$, but this follows from $(i)$ and $(ii)$.

\begin{proof}[Proof of Theorem \ref{thm:planar_hypertrees_represent}]
First we show that in any linear equivalence class of $\Pic(D_V)$ of degree $|E|-1$, there is at most one hypertree of $G_R$ on $V$. Suppose for a contradiction that there exist two hypertrees $f, f' \in B_{G_R}(V)$ such that $f\sim f'$ in $D_V$. This means that there exist $z\in \mathbb{Z}^V$ such that $f'=f + L_{D_V}z$. Since $L_{D_V}\mathbf{1}=\mathbf{0}$, we can suppose that $z$ only has nonnegative elements, and it has a zero coordinate.

Let $S=\{v\in V: z(v)=0\}$. Then $f'(S)\geq f(S) + d_{D_V}(V-S, S)$. Indeed, we can get from $f$ to $f'$ by firing each node $v\in V$ exactly $z(v)$ times, in which case nodes of $S$ do not fire, while each node of $V-S$ fires at least once. Thus $S$ does not lose any chips, and it receives at least one chip through each edge leading from $V-S$ to $S$.  
On the other hand,
$$f(S)\geq |E|-|\Gamma_{G_R}(V-S)|$$
using $f(S)=f(V)-f(V-S)$ and Theorem \ref{thm:hypertree_char}.
Hence $$f'(S)\geq f(S) + d_{D_V}(V-S,S)\geq |E|-|\Gamma_{G_R}(V-S)|+ d_{D_V}(V-S,S).$$

The number $d_{D_V}(V-S,S)$ counts the directed edges leading from $V-S$ to $S$, which is the number of the edges of $G_V$ (i.e., violet edges) such that the black triangle incident to them has a violet node from $V-S$ and the white triangle incident to them has a violet node from $S$. Notice that for each emerald node $e$ that has neighbors both from $S$ and from $V-S$, there is at least one violet edge incident to $e$ with the above property. Indeed, if we look at the violet neighbors of $e$ in a positive cyclic order, there must be a time where after a neighbor from $V-S$, we see a neighbor from $S$. The violet edge incident to $e$ separating the triangles of these two neighbors will be appropriate. Hence $d_{D_V}(V-S,S)$ is at least the number of emerald nodes that have neighbors from both $S$ and $V-S$ in $G_R$. 
Now $|E|-|\Gamma_{G_R}(V-S)|+ d_{D_V}(V-S,S)\geq|\Gamma_{G_R}(S)|$, since from the number of emerald nodes we subtracted those that are neighbors of $V-S$ (in $G_R$), but then added back at least the number of those nodes that are also neighbors of $S$. This means $f'(S)\geq |\Gamma_{G_R}(S)|$ which contradicts the fact that $f'$ is a hypertree. With this we have proved that any linear equivalence class of $\Pic(D_V)$ of degree $|E|-1$ contains at most one hypertree.

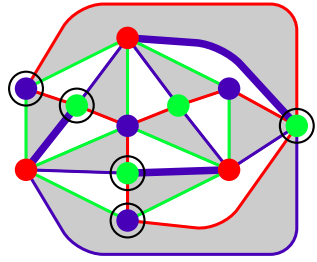
\begin{figure}
	\begin{center}
		\begin{tikzpicture}[-,>=stealth',auto,scale=0.45,
		thick]
		\draw[fill=light-gray,color=light-gray] (0, 6.2) -- (2.8, 6) -- (5, 3.6) -- (4.6, 7) -- (-1, 7) -- (-3, 4.7) -- cycle;
		\draw[color=light-gray,fill=light-gray,rounded corners=10pt] (0, 6.2) -- (2.8, 6) -- (5, 3.6) -- (5, 7.2) -- (-1.5, 7.2) -- (-3, 4.7) -- cycle;
		\draw[color=light-gray,fill=light-gray,rounded corners=10pt] (0, 0.8) -- (2.8, 0.5) -- (5, 3.6) -- (5, -0.2) -- (-1.5, -0.2) -- (-3, 2.3) -- cycle;
		\draw[color=light-gray,fill=light-gray] (0, 0.8) -- (2.8, 0.5) -- (5, 3.6) -- (4.7, 0) -- (-1, 0) -- (-3, 2.3) -- cycle;
		\draw[fill=light-gray] (-3, 4.7) -- (-1.5, 4.2) -- (-3, 2.3) -- cycle;
		\draw[fill=light-gray] (0, 3.6) -- (-1.5, 4.2) -- (0, 6.2) -- cycle;
		\draw[fill=light-gray] (0, 3.6) -- (-3, 2.3) -- (0,2.2) -- cycle;
		\draw[fill=light-gray] (0, 0.8) -- (3, 2.3) -- (0,2.2) -- cycle;
		\draw[fill=light-gray] (0, 3.6) -- (3, 2.3) -- (1.5, 4.2) -- cycle;
		\draw[fill=light-gray] (3, 4.7) -- (0, 6.2) -- (1.5, 4.2) -- cycle;
		\draw[fill=light-gray] (3, 4.7) -- (3, 2.3) -- (5,3.6) -- cycle;
		\draw[-, line width=0.4mm, color=v] (1.5, 4.2) to (3, 2.3);
		\draw[-, line width=0.4mm, color=r] (1.5, 4.2) to (3, 4.7);
		\draw[-, line width=0.4mm, color=v] (1.5, 4.2) to (0, 6.2);
		\draw[-, line width=0.4mm, color=r] (1.5, 4.2) to (0, 3.6);
		\draw[-, line width=0.4mm, color=r] (-1.5, 4.2) to (0, 3.6);
		\draw[-, line width=0.4mm, color=v] (-1.5, 4.2) to (0, 6.2);
		\draw[-, line width=0.4mm, color=r] (-1.5, 4.2) to (-3, 4.7);
		\draw[-, line width=1mm, color=v] (-1.5, 4.2) to (-3, 2.3);
		\draw[-, line width=0.4mm, color=r] (0, 2.2) to (0, 3.6);
		\draw[-, line width=0.4mm, color=v] (0, 2.2) to (-3, 2.3);
		\draw[-, line width=0.4mm, color=r] (0, 2.2) to (0, 0.8);
		\draw[-, line width=1mm, color=v] (0, 2.2) to (3, 2.3);
		\draw[-, line width=0.4mm, color=v] (5, 3.6) to (3, 2.3);
		\draw[-, line width=0.4mm, color=r] (3, 4.7) to (5, 3.6);
		\draw[-, line width=1mm, color=v,rounded corners=10pt] (5, 3.6) -- (2.8, 6) -- (0, 6.2);
		\draw[-, line width=0.4mm, color=r, rounded corners=10pt] (-3, 4.7) -- (-1.5, 7.2) -- (5, 7.2) -- (5, 3.6);
		\draw[-, line width=0.4mm, color=r,rounded corners=10pt] (5, 3.6) -- (2.8, 0.5) -- (0, 0.8);
		\draw[-, line width=0.4mm, color=v, rounded corners=10pt] (-3, 2.3) -- (-1.5, -0.2) -- (5, -0.2) -- (5, 3.6);
		\path[every node/.style={font=\sffamily\small},color=e,line width=0.4mm]
		(-3, 4.7) edge node {} (0, 6.2)
		(3, 2.3) edge node {} (3, 4.7)
		(3, 2.3) edge node {} (0, 3.6)
		(0, 0.8) edge node {} (-3, 2.3)
		(3, 2.3) edge node {} (0, 0.8)
		(-3, 2.3) edge node {} (-3, 4.7)
		(0, 6.2) edge node {} (3, 4.7)
		(0, 3.6) edge node {} (-3, 2.3)
		(0, 6.2) edge node {} (0, 3.6);
		\draw [color=v,fill=v] (0, 0.8) circle [radius=0.3];
		\draw (0, 0.8) circle [radius=0.5];
		\draw [color=r,fill=r] (3, 2.3) circle [radius=0.3];
		\draw [color=r,fill=r] (-3, 2.3) circle [radius=0.3];
		\draw [color=v,fill=v] (3, 4.7) circle [radius=0.3];
		\draw [color=v,fill=v] (-3, 4.7) circle [radius=0.3];
		\draw (-3, 4.7) circle [radius=0.5];
		\draw [color=r,fill=r] (0, 6.2) circle [radius=0.3];
		\draw [color=v,fill=v] (0, 3.6) circle [radius=0.3];		
		\draw [color=e,fill=e] (5, 3.6) circle [radius=0.3];
		\draw (5, 3.6) circle [radius=0.5];
		\draw [color=e,fill=e] (1.5, 4.2) circle [radius=0.3];
		\draw [color=e,fill=e] (-1.5, 4.2) circle [radius=0.3];
		\draw (-1.5, 4.2) circle [radius=0.5];
		\draw [color=e,fill=e] (0, 2.2) circle [radius=0.3];
		\draw (0, 2.2) circle [radius=0.5];
		\end{tikzpicture}
	\end{center}
	\caption{An illustration for the proof of Theorem \ref{thm:planar_hypertrees_represent}. If $V-S$ is the set of circled violet nodes, then $\Gamma_{G_R}(V-S)$ is the set of circled emerald nodes, while the edges of $D_V$ leading from $V-S$ to $S$ corespond to the thick violet edges.}
	\label{f:hypertrees_repr_illustr}
\end{figure}

To finish the proof it is enough to show that the number of hypertrees of $G_R$ on $V$ is equal to the number of linear equivalence classes of $\Pic(D_V)$ of degree $|E|-1$.

As $\Pic(D_V)=\Pic^0(D_V)\times \mathbb{Z}$, the number of linear equivalence classes of $\Pic(D_V)$ of degree $|E|-1$ is equal to the degree of $\Pic^0(D_V)$. Because by Theorem \ref{thm:isomorphism_between_sandpile_groups} we have $\Pic^0(D_V)\cong \Pic^0(D_R)$, it is enough to show that the number of hypertrees of $G_R$ on $V$ is equal to the order of $\Pic^0(D_R)$. By Theorem \ref{thm:Jaeger_trees_represent_hypertrees}, the number of hypertrees of $G_R$ on $V$ is equal to the number of Jaeger trees of $G_R$ with base node $b_0$ and base edge $b_0b_1$.
By Proposition \ref{prop:Jaeger--arborescence}, the number of such Jaeger trees is equal to the number of arborescences of $D_R$ rooted at $r_0$ where $r_0$ is such that $r_0b_0b_1$ is a black triangle of the trinity.
Finally, by Fact \ref{fact:order_of_sandpile_group}, the order of $\Pic^0(D_R)$ is equal to the number of arborescences of $D_R$ rooted at $r_0$. This finishes the proof of the theorem.
\end{proof}

\begin{remark}
	For the case of (not necessarily planar) undirected graphs, there exists an effective procedure for finding a hypertree equivalent to a given chip configuration of degree $|E|-1$. See \cite{GSTsemibreak} (there the procedure is written for the more general case of metric graphs).
	However, the analogue of this procedure does not work for the case of balanced plane digraphs. It would be interesting to find an effective algorithm for this case.
\end{remark}

\section{Rotor-routing} \label{sec:rotor}

Baker and Wang proved that the rotor-routing action of a planar ribbon graph coincides with its Bernardi action \cite{Baker-Wang}. They also showed that for non-planar ribbon graphs, the two actions can be different, and conjectured \cite[Conjecture 7.2]{Baker-Wang} that for any non-planar ribbon graph, there exists a base point such that the rotor-routing and the Bernardi action with the given base point are different.

In this section we show that the identity of the rotor-routing and Bernardi actions carries over to the balanced plane digraph case. 
Let us repeat the definition of the rotor-routing action, and the rotor-routing game following \cite{Holroyd}.

Let $D$ be a ribbon digraph. For digraphs, a ribbon structure means a cyclic ordering of the in- and out-edges around each vertex. For an out-edge $e$ pointing away from a vertex $v$, by $e^+$ we mean the next out-edge of $v$ according to the ribbon structure. Let $v_0$ be a fixed vertex of $D$ (the \emph{root} or \emph{sink}).

A \emph{rotor configuration} is a function $\varrho$ that assigns to each
vertex $v\neq v_0$ an out-edge with tail $v$. We call $\varrho(v)$ the \emph{rotor} at $v$.
A configuration of the rotor-routing game is a pair $(x,\varrho)$, where $x\in\Div(D)$ and $\varrho$ is a rotor configuration.

Given a configuration $(x,\varrho)$, a \emph{routing} at vertex $v$ results in the  
configuration $(x', \varrho')$, where
$\varrho'$ is the rotor configuration with
$$
\varrho'(u) = \left\{\begin{array}{cl} \varrho(u) & \text{if $u\neq v$,}  \\
\varrho(u)^+ & \text{if $u=v$},
\end{array} \right.
$$
and $x'=x-\mathbf{1}_v+\mathbf{1}_{v'}$ where $v'$ is the head of $\varrho'(v)$.

A step of the rotor-routing game is to take a vertex with a positive number of chips, and perform a routing at that vertex.

The rotor-routing action (with root $v_0$) is the action of $\Pic^0(D)$ on the in-arborescences of $D$ rooted at $v_0$. We first define the action of chip configurations of the form $\mathbf{1}_v-\mathbf{1}_{v_0}$. 

For an in-arborescence $A$ rooted at $v_0$, the action of a chip configuration of the form $\mathbf{1}_v-\mathbf{1}_{v_0}$ is defined in the following way. We can think of $A$ as a rotor-configuration, since each vertex $v\neq v_0$ has exactly one out-edge in $A$. Play a rotor-routing game started from $(\mathbf{1}_v-\mathbf{1}_{v_0}, A)$ until the chip reaches $v_0$. In other words, in this moment, the configuration of the game will be $(\mathbf{0}, \varrho)$ for some $\varrho$. It is proved in \cite{Holroyd}, that we eventually reach such a configuration, moreover, the rotor configuration $\varrho$ will be another arborescence $A'$ at this moment. (Notice that in this situation, the game is deterministic. It can happen that during the game the rotor-configuration is not an arborescence in some steps, but when the chip eventually reaches $v_0$, it will be. See more in \cite{Holroyd}.) Then take $(\mathbf{1}_v-\mathbf{1}_{v_0})_{v_0} A = A'$.

Note that (equivalence classes of) chip configurations of the form $\mathbf{1}_v-\mathbf{1}_{v_0}$ generate $\Pic^0(D)$. The action of a general $x\in \Pic^0(D)$ is defined linearly. By \cite{Holroyd}, this is well-defined.

We will need the following technical result from \cite{alg_rotor}, that gives a more easily checkable condition for $x_{v_0}A=A'$.

\begin{prop}\cite[Proposition 3.16]{alg_rotor} \label{prop:rotor_action_char}
If $x$ is any chip configuration, $A$ and $A'$ are in-arborescences rooted at $v_0$, and we can get $(\mathbf{0}, A')$ from $(x, A)$ by playing a rotor-routing game, then $x_{v_0} A = A'$. \hfill\qedsymbol
\end{prop}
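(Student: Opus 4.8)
The plan is to compare the given legal rotor-routing game with the recipe that \emph{defines} the action $x_{v_0}A$, using two facts established in \cite{Holroyd}: the abelian property of rotor-routing (the terminal configuration of a legal routing sequence does not depend on the order in which the vertices are routed) and the fact that $[x]\mapsto x_{v_0}A$ is a well-defined action of $\Pic^0(D)$, defined on the generators $\mathbf{1}_v-\mathbf{1}_{v_0}$ by single-chip routing and extended linearly. First note that a routing never changes the total number of chips, so reaching $(\mathbf{0},A')$ from $(x,A)$ forces $\deg(x)=0$; hence $[x]\in\Pic^0(D)$ and $x_{v_0}A$ is genuinely the action of the class $[x]$.

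Next I would treat the case where $x(v)\ge 0$ for every non-sink vertex $v$ (so that $x(v_0)=-\sum_{v\neq v_0}x(v)\le 0$). Here $x$ can be written as $\sum_j(\mathbf{1}_{w_j}-\mathbf{1}_{v_0})$, listing each non-sink vertex $w$ with multiplicity $x(w)$. The idea is to route the chips to the sink \emph{one at a time}: after routing the single chip starting at $w_1$ until it reaches $v_0$, the rotor configuration becomes $(\mathbf{1}_{w_1}-\mathbf{1}_{v_0})_{v_0}A$ by the very definition of the single-chip action; repeating for $w_2,\dots,w_m$ on the successive arborescences and using that the action is a homomorphism yields the rotor configuration $\bigl[\sum_j(\mathbf{1}_{w_j}-\mathbf{1}_{v_0})\bigr]_{v_0}A=x_{v_0}A$. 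By the abelian property this chip-by-chip routing reaches the same terminal rotor configuration as the given game, namely $A'$; hence $x_{v_0}A=A'$ in this case.

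For general $x$, which may be negative on some non-sink vertices, such vertices cannot be routed directly and the single-chip decomposition is not immediately legal; this is the main obstacle. To handle it I would use the basic identity relating rotor turns to chip-firing: routing a vertex $v$ through a full cycle of its $d^+(v)$ out-edges restores its rotor and changes the chip configuration by $L_D\mathbf{1}_v$. Consequently, passing from $x$ to a linearly equivalent $\tilde x=x+L_D y$ can be realized by full rotations that leave the rotor configuration equal to $A$, and I would choose $y$ so that $\tilde x$ is non-negative on all non-sink vertices. The plan is then to show, via the abelian property, that $(\tilde x,A)$ also routes to $(\mathbf{0},A')$, reducing to the previous paragraph, while $[\tilde x]=[x]$ guarantees $\tilde x_{v_0}A=x_{v_0}A$. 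The delicate point, where the technical heart of \cite{alg_rotor} lies, is to justify these added full rotations as \emph{legal} moves that preserve both the rotor configuration $A$ and the terminal configuration $A'$; this requires the sharp form of the abelian property (equality of odometers for legal sequences ending in the same sink-absorbed state), which is exactly the ingredient that makes the reduction to the non-negative case rigorous.
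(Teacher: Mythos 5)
First, a remark on the comparison itself: the paper does not prove this statement --- it is quoted from \cite{alg_rotor} with the proof omitted (note the tombstone in the statement) --- so there is no in-paper argument to measure yours against. Judged on its own terms, your first two steps are sound. Conservation of chips does force $\deg(x)=0$, and for $x$ nonnegative off the sink the chip-by-chip routing, combined with the abelian property from \cite{Holroyd} and the fact that $[x]\mapsto x_{v_0}A$ is a well-defined action of $\Pic^0(D)$, correctly yields $x_{v_0}A=A'$.

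The genuine gap is in your reduction of the general case, and you have located it but not closed it. The ``full rotations'' realizing $x\mapsto\tilde x=x+L_Dy$ are in general not legal moves: a vertex $v$ with a non-positive chip count can never be routed until other vertices feed it, and a full rotation at $v$ requires $d^{+}(v)$ separate routings at $v$, each needing a positive count at that instant. For instance, on a bidirected triangle on $v_0,a,b$ with $x=\mathbf{1}_a-\mathbf{1}_b$, the only legal play routes $a$ once before every non-sink count is $\le 0$, so no full rotation is ever available, yet $x$ is negative at $b$. No version of the abelian property repairs this, because that principle only compares legal games issued from the \emph{same} initial configuration. A reduction that does work, and that bypasses linear equivalence entirely, is the following: set $e=\sum_{v\neq v_0,\;x(v)<0}(-x(v))(\mathbf{1}_v-\mathbf{1}_{v_0})$, so that $x+e$ is nonnegative off the sink. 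Since $e\ge 0$ off the sink, the given legal sequence taking $(x,A)$ to $(\mathbf{0},A')$ remains legal started from $(x+e,A)$ and lands at $(e,A')$; continue routing until all chips are absorbed. Applying your nonnegative case to the whole game and to its tail gives $(x+e)_{v_0}A=e_{v_0}A'$, while the action property gives $(x+e)_{v_0}A=e_{v_0}(x_{v_0}A)$; cancelling the invertible action of $e$ yields $x_{v_0}A=A'$. With this replacement your argument becomes a complete proof; as written, the general case is only a plan whose key step fails.
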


In the undirected (bidirected) case, an in-arborescence can be identified with an undirected spanning tree, hence the rotor-routing action with any root can act on the same set of objects. Hence it makes sense to ask if the rotor-routing action is independent of the root. For this problem, the answer is analogous to the case of the Bernardi action. By Chan et al. \cite{Chan15}, the rotor-routing action of an undirected ribbon graph is independent of the root if and only if the ribbon structure is planar. In \cite{rr_comp}, the compatibility with planar duality was also proved, moreover, Baker and Wang proved that for (undirected) planar ribbon graphs, the rotor-routing and the Bernardi actions coincide.
For general digraphs, it is not obvious whether one can pull the rotor-routing actions with different roots onto the same set of objects. Notice, however, that for balanced planar digraphs, we can associate a hypertree to any in-arborescence: For an in-arborescence $A$ of $D_V$, its dual tree in $G_V$ is an $R$-cut Jaeger tree $T$ with base point $r_0$ and base edge $r_0e_0$ where $r_0$ and $e_0$ are chosen such that $r_0e_0v_0$ is a white triangle. (This can be proved as Proposition \ref{prop:Jaeger--arborescence}.) $T$ realizes a hypertree $f^*\in B_E(G_V)$ on $E$ (in the undirected case, $f^*$ corresponds to a spanning tree of the dual graph). Now $f =d_{G_V}|_E - \mathbf{1} - f^*$ is a hypertree in $B_E(G_R)$.
Let us associate the hypertree $f$ to $A$.
This way we can once again pull the action with any root onto the same set of objects. It is easy to check that in the undirected planar ribbon graph case, the above construction associates to any arborescence $A$ the characteristic vector of the spanning tree we get by forgetting the orientations in $A$.

\begin{thm}\label{thm:rotor_Bernardi_the_same}
	Let $D$ be a balanced plane digraph. For any root $v_0$, chip-configu\-ra\-tion $x$ and hypertree $f\in B_E(G_R)$, $x_{v_0} f = \varphi_{V\to E}([-x]) \oplus f$, i.e. 
	the rotor-routing action on $D$ with root $v_0$ coincides with the Bernardi action of the inverse. Consequently, the rotor-routing action is independent of the root in the case of balanced plane digraphs.
\end{thm}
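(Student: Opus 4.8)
```latex
\textbf{Plan of proof.}
The plan is to reduce the identity $x_{v_0} f = \varphi_{V\to E}([-x]) \oplus f$ to the single generating case $x = \mathbf{1}_v - \mathbf{1}_{v_0}$, and then to verify that case by exhibiting an explicit rotor-routing game whose effect on the associated hypertrees is precisely addition of $\varphi_{V\to E}([\mathbf{1}_{v_0}-\mathbf{1}_v])$. Both sides are, by construction, built to respect the group structure: the rotor-routing action extends linearly from the generators $\mathbf{1}_v-\mathbf{1}_{v_0}$ (as recalled just before the statement), and $f \mapsto \varphi_{V\to E}([-x])\oplus f$ is a genuine group action of $\Pic^0(D_V)$ on $B_E(G_R)$ since $\varphi_{V\to E}$ is a homomorphism (Theorem \ref{thm:isomorphism_between_sandpile_groups}) and $\oplus$ is the natural sandpile action. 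So it suffices to match the two actions on the generators; the general case then follows because the elements $[\mathbf{1}_v-\mathbf{1}_{v_0}]$ generate $\Pic^0(D_V)$.

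\textbf{The generator case via the arborescence--Jaeger-tree dictionary.}
First I would fix a root $v_0$ and translate the rotor-routing picture into the language of $G_V$ and hypertrees using the correspondence set up just before the theorem: an in-arborescence $A$ of $D_V$ rooted at $v_0$ is dual to an $R$-cut Jaeger tree $T$ of $G_V$ (with base data $r_0 e_0 v_0$ a white triangle), which realizes $f^* \in B_E(G_V)$, and we attach to $A$ the planar dual hypertree $f = d_{G_V}|_E - \mathbf{1} - f^* \in B_E(G_R)$. The heart of the argument is to start a rotor-routing game from $(\mathbf{1}_v - \mathbf{1}_{v_0}, A)$ and track how the rotor configuration evolves into the terminal arborescence $A'$ when the chip reaches $v_0$. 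Using Proposition \ref{prop:rotor_action_char}, it is enough to produce \emph{any} legal play of the game from $(\mathbf{1}_v - \mathbf{1}_{v_0}, A)$ reaching $(\mathbf{0}, A')$; I do not need to follow the canonical deterministic play. The plan is to read each routing step as a local move on $G_V$ and to show that the net change $f_E(T') - f_E(T)$ on the emerald coordinates (and hence $f'-f$) equals, up to the planar-dualizing $f \mapsto f^*$, a chip configuration linearly equivalent on $D_E$ to $\varphi_{V\to E}([\mathbf{1}_{v_0}-\mathbf{1}_v])$. Concretely, the chip travelling from $v$ to $v_0$ along $D_V$ should correspond, under the duality between cutting through edges and traversing dual edges (the dictionary established before Proposition \ref{prop:Jaeger--arborescence}), to the same white-triangle combination that witnesses $(\mathbf{1}_{v_0}-\mathbf{1}_v, \mathbf{0}, \mathbf{0}) \wapr (\mathbf{0}, \varphi_{V\to E}([\mathbf{1}_{v_0}-\mathbf{1}_v]), \mathbf{0})$.

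\textbf{Main obstacle.}
The hard part will be the bookkeeping that matches the trajectory of a single chip under rotor-routing with the sandpile translation on $B_E(G_R)$ through the planar dual and the isomorphism $\varphi_{V\to E}$. Rotor-routing is a priori a dynamical, step-by-step process on $D_V$, whereas $\oplus$ and $\varphi_{V\to E}$ are defined statically via linear equivalence and white-triangle equivalence; bridging these requires showing that the cumulative rotor motion records exactly a sum of the white triangles appearing in $\varphi_{V\to E}$. I expect the cleanest route is to avoid a direct step-by-step comparison and instead invoke Proposition \ref{prop:rotor_action_char}: exhibit a single rotor-routing play whose terminal arborescence $A'$ is the dual of the Jaeger tree realizing $\varphi_{V\to E}([-x])\oplus f$, and then verify the linear-equivalence certificate on $D_V$ directly from the chip's path. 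An alternative, possibly shorter, strategy is to verify that the rotor-routing action and the map $f \mapsto \varphi_{V\to E}([-x])\oplus f$ agree on generators by a counting/transitivity argument: both are free transitive actions of $\Pic^0(D_V)$ on the finite set $B_E(G_R)$ (freeness and transitivity of $\oplus$ follow from Theorem \ref{thm:planar_hypertrees_represent}), so it suffices to check that a single generator acts identically at a single base hypertree, whereupon equality everywhere is forced. The independence of the root is then immediate, since the right-hand side $\varphi_{V\to E}([-x])\oplus f$ makes no reference to $v_0$.
```
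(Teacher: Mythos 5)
Your reduction to the generators $\mathbf{1}_v-\mathbf{1}_{v_0}$ is legitimate as a reduction (both sides are actions extended linearly from these classes), but it leaves the entire difficulty untouched: for such a generator the rotor-routing play is an arbitrary, possibly long chip trajectory from $v$ to $v_0$, passing through rotor configurations that are not arborescences, and you give no mechanism for matching its cumulative effect with a white-triangle certificate. You flag this as the ``main obstacle'' and offer two escapes, neither of which works. The first (``exhibit a play whose terminal arborescence is the dual of the Jaeger tree realizing $\varphi_{V\to E}([-x])\oplus f$'') is circular: identifying that Jaeger tree and producing a play reaching it is precisely the content of the theorem. The second is false: two free transitive actions of an abelian group $\Gamma$ on a set $X$ that agree for one generator at one base point need not coincide. (Take $\Gamma=X=\mathbb{Z}/4$, $\alpha(h,g)=h+g$, and $\beta(h,g)=\phi^{-1}(h+\phi(g))$ for the bijection $\phi$ fixing $0,1$ and swapping $2,3$; then $\beta$ is a free transitive action agreeing with $\alpha$ at $(1,0)$ but $\beta(2,0)=3\neq 2$.) Agreement at a single point for \emph{all} group elements would suffice, but that is not what checking one generator at one point gives you, since the bijection comparing the two actions at a fixed point need not be a homomorphism.

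The paper's proof avoids all of this by decomposing on the other side of the action: instead of splitting $x$ into generators, it connects the two arborescences $A$ and $A'$ by a chain of single-edge swaps (Claim \ref{cl:arborescences_one_by_one}). A single swap, removing $vv'$ and adding $vv''$, is realized by the rotor play consisting of $k$ consecutive routings at the one vertex $v$, so Proposition \ref{prop:rotor_action_char} identifies the acting class as $[k\mathbf{1}_v-\mathbf{1}_{u_1}-\dots-\mathbf{1}_{u_k}]$; the white triangles around the face of $G_V$ corresponding to $v$ then give $\varphi_{V\to E}$ of this class as $[\mathbf{1}_{e_0}-\mathbf{1}_{e_k}]$, which is exactly how the dual Jaeger tree (hence the hypertree) changes. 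This choice of decomposition is the key idea that makes the generator case computable, and it is absent from your proposal.
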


\begin{remark}
   Baker and Wang obtained that the Bernardi action agrees with the rotor-routing action in the case of plane graphs. The Bernardi action in our interpretation agrees with the Bernardi action of the inverse in the interpretation of Baker and Wang. That is the reason that in our interpretation the rotor-routing action agrees with the Bernardi action of the inverse.
\end{remark}

In the proof we will need the following technical claim.
\begin{claim}\label{cl:arborescences_one_by_one}
	For any two in-arborescences $A$ and $A'$, there exist a sequence of arborescences $A=A_0, A_1, \dots , A_k=A'$ such that $A_{i+1}$ is obtained from $A_i$ by adding and removing one edge.
\end{claim}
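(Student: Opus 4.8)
The plan is to prove that the \emph{flip graph} on in-arborescences rooted at the common vertex $v_0$ is connected, by transforming $A$ into $A'$ while correcting the out-edges one vertex at a time, in order of increasing distance from the root in the \emph{target} arborescence $A'$.

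First I would record the basic structural fact underlying the notion of a single flip. Since every in-arborescence rooted at $v_0$ assigns to each vertex $v\neq v_0$ exactly one out-edge (and to $v_0$ none), it has exactly $|V|-1$ edges; consequently, an operation that removes one edge and adds one edge can produce another in-arborescence only if it replaces the out-edge of a single vertex $v$ by another out-edge of that same $v$ (otherwise some vertex would be left with no out-edge and another with two). Thus a single flip is exactly a rechoice of the out-edge at one vertex that preserves acyclicity, and it suffices to build a sequence of such rechoices from $A$ to $A'$.

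The main step is the ordering argument. Order the non-root vertices by the length of the unique directed path to $v_0$ inside $A'$ (their \emph{$A'$-level}), and process them from level $1$ upward; when a vertex $v$ is processed, set its out-edge equal to the one it has in $A'$, skipping the step if it already agrees. I would maintain the invariant that the current configuration $C$ is an in-arborescence and that every already-processed vertex follows its $A'$-out-edge in $C$. As each non-root vertex always carries exactly one out-edge, only acyclicity must be verified. Suppose $v$, at level $\ell$, is redirected toward its $A'$-parent $w$. Any directed cycle created must use the new edge from $v$ to $w$, since $C$ was previously acyclic and only $v$'s out-edge changed; the remainder of such a cycle would be a $C$-path from $w$ back to $v$. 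But $w$ and all of its $A'$-ancestors have $A'$-level strictly less than $\ell$, so they are already processed and follow their $A'$-out-edges in $C$; hence the directed path leaving $w$ in $C$ climbs the $A'$-path up to $v_0$ and never reaches $v$, because $v$ is a descendant, not an ancestor, of $w$ in the acyclic $A'$. This contradiction shows no cycle is created, so $C$ stays an in-arborescence.

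I expect this acyclicity check to be the only genuine obstacle; the rest is bookkeeping. The decisive point is the choice of ordering by $A'$-level: it guarantees that whenever a vertex is pointed at its target parent, that parent's entire route to the root has already been fixed to match $A'$, so the redirection closes no cycle. After all levels are processed one has $C=A'$, and discarding the steps in which no change was made leaves a sequence $A=A_0,A_1,\dots,A_k=A'$ of in-arborescences with consecutive terms differing by one added and one removed edge, as required.
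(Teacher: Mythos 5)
Your proof is correct, and it follows the same overall strategy as the paper: observe that a single add-and-remove flip between in-arborescences is necessarily a rechoice of the out-edge at one vertex, and then repair the disagreeing out-edges one at a time in an order chosen so that no flip closes a directed cycle. The difference is in the selection rule and the accompanying acyclicity argument. The paper orders by the \emph{current} arborescence: at each step it picks a disagreeing vertex $w$ that is not reachable in $A$ from any other disagreeing vertex, replaces its out-edge by the $A'$-one, and excludes a cycle by contradiction (a cycle would have to contain an edge $w'u\in A\setminus A'$ from which $w$ is reachable in $A$, violating the maximality of $w$); it then recurses on the new arborescence. You instead commit in advance to a schedule determined by the \emph{target}, processing vertices by increasing distance to the root in $A'$, which gives you a clean standing invariant (all processed vertices already agree with $A'$) and a direct, constructive acyclicity check: the out-path from the new parent climbs the already-repaired $A'$-ancestor chain straight to $v_0$, through vertices of strictly smaller level, so it cannot return to $v$. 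Neither argument is more general or appreciably shorter; yours trades the paper's step-by-step re-selection of a pivot for the bookkeeping of a global invariant, and both correctly note that, out-degrees being forced, only acyclicity needs to be verified after each flip.
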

\begin{proof}
	In an in-arborescence rooted at $v_0$, each vertex different from $v_0$ has one out-edge. Let $W\subset V$ be the set of vertices that have different out-edge in $A$ and in $A'$. Let us introduce a partial order on $W$: Let $w_1\leq w_2$ if $w_1$ is reachable on a directed path from $w_2$ in $A$. Let $w$ be a maximal element with respect to this partial order. Suppose that the out-edge of $w$ in $A$ is $wv$ and the out-edge of $w$ in $A'$ is $wv'$. We claim that $A-wv +wv'$ is another in-arborescence, hence we can set $A_1=A-wv +wv'$ and continue similarly. 
	
	Since all out-degrees of $A-wv +wv'$ are correct, we only need to prove that the underlying undirected graph of $A-wv +wv'$ is a tree. Suppose for a contradiction that $A-wv +wv'$ has a cycle. Then, since $wv'\in A'$ and $A'$ is an arborescence, there must be an edge $w'u$ in this cycle that is not in $A'$. But as $w'u\in A$, we have $w'\in W$. But since $w$ is reachable from $w'$ in $A$, we have $w\leq w'$ which contradicts the fact that $w$ was a maximal element for $\leq$, hence indeed $A-wv +wv'$ is an in-arborescence and we have proved our claim.
\end{proof} 

\begin{proof}[Proof of Theorem \ref{thm:rotor_Bernardi_the_same}]
	Take the trinity obtained from $D$, and let us call $D=D_V$ in the followings. Fix our root $v_0$ (which is a violet node of the trinity). Let us fix a red node $b_0$ and an emerald node $b_1$ such that $v_0b_0b_1$ is a black triangle.
	
	
	To any $f, f'\in B_E(G_R)$, $d_{G_R}|_E-\mathbf{1} - f=d_{G_V}|_E-\mathbf{1} - f$ and $d_{G_R}|_E-\mathbf{1} - f'=d_{G_V}|_E-\mathbf{1} - f'$ are from $B_E(G_V)$. 
	Hence there exist unique R-cut Jaeger trees $T$ and $T'$ of $G_V$ (with starting edge $b_0b_1$) such that $d_{G_V}|_E-\mathbf{1} - f=f_E(T)$ and $d_{G_V}|_E-\mathbf{1} - f'=f_E(T')$ or in other words, $f=d_{G_V}|_E - \mathbf{1} - f_E(T)$ and $f'=d_{G_V}|_E - \mathbf{1} - f_E(T')$. By Proposition \ref{prop:Jaeger--arborescence} (with colors permuted), the dual spanning trees to $T$ and $T'$ in $D_V$ are two in-arborescences $A$, and $A'$ rooted at $v_0$.
	
	We need to show that if for some $x \in \Pic^0(D_V)$, $(x)_{v_0} A = A'$, that is, for hypertrees, $x_{v_0} (d_{G_V} - \mathbf{1} - f_E(T)) = (d_{G_V} - \mathbf{1} - f_E(T'))$, then $\varphi_{V \to E}([-x]) \oplus (d_{G_V} - \mathbf{1} - f_E(T)) = (d_{G_V} - \mathbf{1} - f_E(T'))$. By Theorem \ref{thm:compatibility_of_the_natural_action}, the latter is equivalent to $\varphi_{V \to E}([x]) \oplus f_E(T) = f_E(T')$.
	
	By Claim \ref{cl:arborescences_one_by_one} it is enough to consider the case when $A'$ can be obtained from $A$ by removing an arc and adding one. As $A$ and $A'$ are both in-arborescences, this means that an arc $vv'$ is removed, and an arc $vv''$ is added. Suppose that in the ribbon structure of $D_V$ the out-edges at $v$ follow each other in the order $vv'=vu_0, vu_1, \dots, vu_k=vv''$. Then by Proposition \ref{prop:rotor_action_char}, $[x]=[k\mathbf{1}_v - \mathbf{1}_{u_1} - \dots - \mathbf{1}_{u_k}]$, since by performing $k$ routings at $v$ from the configuration $(x,A)$, we arrive at $(\mathbf{0}, A')$, moreover, $A$ and $A'$ are both arborescences.
	
	Now let us find $\varphi_{V \to E}([x])$. This is the equivalence class of a $y$ such that $(x,y,0)\wapr (0,0,0)$. We can argue similarly as in the proof of Lemma \ref{lem:lin_ekv=>haromszog_ekv}. Let $r_ie_i$ be the edge of $G_V$ dual to $vu_i$ for $i=0,\dots k$. Then $r_0,e_0, r_1, e_1, \dots , r_k, e_k$ follow each other in this order on the boundary of the face of $G_V$ corresponding to $v$. (See Figure \ref{f:rotor_thm} for an example.) By taking the white triangle $vr_ie_{i-1}$ with coefficient one and the white triangles of the form $e_ir_iu_i$ with coefficient $-1$, for each $i=1, \dots, k$, we obtain $(x,\mathbf{1}_{e_0}-\mathbf{1}_{e_k}, 0)\wapr (0,0,0)$. Hence $\varphi_{V \to E}([x])=[\mathbf{1}_{e_0}-\mathbf{1}_{e_k}]$. 
	
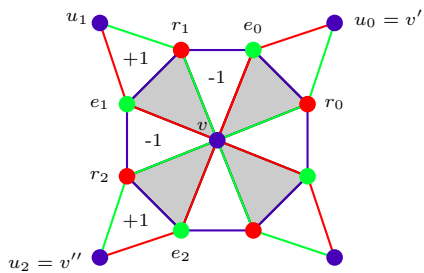
\begin{figure}
\begin{center}
	    \begin{tikzpicture}[-,>=stealth',auto,scale=1.2,
	    thick]
	    \draw[fill=light-gray] (0,0) -- (0.4,1) -- (1,0.4) -- cycle;
	    \draw[fill=light-gray] (0,0) -- (1,-0.4) -- (0.4, -1) -- cycle;
	    \draw[fill=light-gray] (0,0) -- (-0.4,-1) -- (-1,-0.4) -- cycle;
	    \draw[fill=light-gray] (0,0) -- (-1, 0.4) -- (-0.4, 1) -- cycle;    
	    \draw [thick,red] (0, 0) -- (0.4, 1);
	    \draw [thick,red] (0, 0) -- (1, -0.4);
	    \draw [thick,red] (0, 0) -- (-0.4, -1);
	    \draw [thick,red] (0, 0) -- (-1, 0.4);
	    \draw [thick,red] (0.4, 1) -- (1.3, 1.3);
	    \draw [thick,red] (1, -0.4) -- (1.3, -1.3);
	    \draw [thick,red] (-0.4, -1) -- (-1.3, -1.3);
	    \draw [thick,red] (-1, 0.4) -- (-1.3, 1.3);
	    \draw [thick,color=e] (0, 0) -- (-0.4,1);
	    \draw [thick,color=e] (0, 0) -- (1, 0.4);
	    \draw [thick,color=e] (0, 0) -- (0.4,-1);
	    \draw [thick,color=e] (0, 0) -- (-1, -0.4);
	    \draw [thick,color=e] (-0.4,1) -- (-1.3, 1.3);
	    \draw [thick,color=e] (1, 0.4) -- (1.3, 1.3);
	    \draw [thick,color=e] (0.4,-1) -- (1.3, -1.3);
	    \draw [thick,color=e] (-1, -0.4) -- (-1.3, -1.3);
	    \draw [thick,color=v] (-1, -0.4) -- (-1, 0.4);
	    \draw [thick,color=v] (-1, 0.4) -- (-0.4,1);
	    \draw [thick,color=v] (-0.4, 1) -- (0.4,1);
	    \draw [thick,color=v] (0.4, 1) -- (1, 0.4);
	    \draw [thick,color=v] (1, 0.4) -- (1, -0.4);
	    \draw [thick,color=v] (1, -0.4) -- (0.4,-1);
	    \draw [thick,color=v] (0.4, -1) -- (-0.4, -1);
	    \draw [thick,color=v] (-0.4, -1) -- (-1, -0.4);
	    \draw [thick,red] (0, 0) -- (0.4,1);
	    \tikzstyle{r}=[color=r,circle,draw,fill,font=\sffamily\small]
	    \tikzstyle{v}=[color=v,circle,draw,fill]
	    \tikzstyle{e}=[color=e,circle,draw,fill]
	    \tikzstyle{u}=[circle]
	    \draw [fill=v,v] (0, 0) circle [radius=0.08];
	    \draw [fill=v,v] (1.3, -1.3) circle [radius=0.08];
	    \draw [fill=v,v] (1.3, 1.3) circle [radius=0.08];
	    \draw [fill=v,v] (-1.3, 1.3) circle [radius=0.08];
	    \draw [fill=v,v] (-1.3, -1.3) circle [radius=0.08];
	    \draw [fill=e,e] (0.4, 1) circle [radius=0.08];
	    \draw [fill=e,e] (1, -0.4) circle [radius=0.08];
	    \draw [fill=e,e] (-0.4, -1) circle [radius=0.08];
	    \draw [fill=e,e] (-1, 0.4) circle [radius=0.08];
	    \draw [fill=r,r] (1, 0.4) circle [radius=0.08];
	    \draw [fill=r,r] (-1, -0.4) circle [radius=0.08];
	    \draw [fill=r,r] (-0.4, 1) circle [radius=0.08];
	    \draw [fill=r,r] (0.4, -1) circle [radius=0.08];
	    \node[u]  at (0, 0.7) {\scriptsize{-1}};
	    \node[u]  at (-0.7, 0) {\scriptsize{-1}};
	    \node[u]  at (-0.9, 0.9) {\scriptsize{+1}};
	    \node[u]  at (-0.9, -0.9) {\scriptsize{+1}};
	    \node[u]  at (-0.165, 0.16) {\scriptsize{$v$}};
	    \node[u]  at (1.9, 1.35) {\scriptsize{$u_0=v'$}};
	    \node[u]  at (-1.55, 1.35) {\scriptsize{$u_1$}};
	    \node[u]  at (-1.9, -1.35) {\scriptsize{$u_2=v''$}};
	    \node[u]  at (1.3, 0.4) {\scriptsize{$r_0$}};
	    \node[u]  at (0.4, 1.25) {\scriptsize{$e_0$}};
	    \node[u]  at (-0.4, 1.25) {\scriptsize{$r_1$}};
	    \node[u]  at (-1.3, 0.4) {\scriptsize{$e_1$}};
	    \node[u]  at (-1.3, -0.4) {\scriptsize{$r_2$}};
	    \node[u]  at (-0.4, -1.28) {\scriptsize{$e_2$}};
	    \end{tikzpicture}
\end{center}
\caption{An illustration for the proof of Theorem \ref{thm:rotor_Bernardi_the_same}.}
\label{f:rotor_thm}
\end{figure}

	Notice that $T'=T+e_0r_0 - e_kr_k$, hence $f_E(T')=f_E(T)+\mathbf{1}_{e_0}-\mathbf{1}_{e_k}$. Thus, indeed, $\varphi_{V \to E}([x]) \oplus f_E(T) = f_E(T')$, finishing the proof.
\end{proof}

\end{document}